\title{Finiteness of the space of $n$-cycles for a reduced  $(n-2)$-concave complex space}
\author{Daniel Barlet}
\institution{
Institut Elie Cartan, Universit\'e de Lorraine, CNRS UMR 7502, and Institut Universitaire de France}\\
\email{daniel.barlet@univ-lorraine.fr}}
\date{\vspace{-5ex}} 
\journal{\'Epijournal de G\'eom\'etrie Alg\'ebrique} 
\DeclareMathOperator{\E}{\mathbb{E}}
\DeclareMathOperator{\C}{\mathbb{C}}
\DeclareMathOperator{\B}{\mathcal{B}}
\DeclareMathOperator{\Sym}{Sym}
\DeclareMathOperator{\R}{Re}
\newtheorem{thm}{Theorem}[section]
\newtheorem{defn}[thm]{Definition}
\newtheorem{cor}[thm]{Corollary}
\newtheorem{prop}[thm]{Proposition}
\newtheorem{lemma}[thm]{Lemma}
\newcommand{\parag}[1]{\paragraph{\bf #1.}}
\begin{document}


\maketitle


\dedication{pour Yum-Tong Siu, avec mon amiti\'e}

\begin{prelims}

\vspace{-0.1cm}

\def\abstractname{Abstract}
\abstract{We show that for $n \geq 2$  the space of closed $n$-cycles in a strongly $(n-2)$-concave complex space has a natural structure of reduced complex space locally of finite dimension and 
 represents the functor ``analytic family of $n$-cycles'' parametrized by Banach analytic sets.}

\keywords{Closed $n$-cycles; strongly $q$-concave space; Hartogs figure; \textsf{f}-analytic family of cycles}

\MSCclass{32G13; 32G10; 32F10; 32D15}

\vspace{0.05cm}

\languagesection{Fran\c{c}ais}{%

\textbf{Titre. Finitude de l'espace des $n$-cycles pour un espace complexe $(n-2)$-concave r\'eduit}
\commentskip
\textbf{R\'esum\'e.}
Nous montrons que, pour $n\ge 2$, l'espace des $n$-cycles ferm\'es dans un espace complexe fortement $(n-2)$-concave a une structure naturelle d'espace complexe r\'eduit localement de dimension finie et que cet espace repr\'esente le foncteur ``famille analytique de $n$-cycles'' param\'etr\'ee par des ensembles analytiques banachiques.}

\end{prelims}


\newpage

\tableofcontents

\section{Introduction}

The aim of this article is to show that in a reduced strongly $(n-2)$-concave\footnote{Our conventions will be precised below.} complex space $Z$ with $n \geq 2$, the space of closed $n$-cycles  is in a natural way endowed with a structure of a reduced  complex space locally of finite dimension. With its tautological family of $n$-cycles it represents the functor ``analytic family of $n$-cycles in $Z$'' and also the functor ``\textsf{f}-analytic family of $n$-cycles in $Z$'' introduced in \cite{B08} (see also \cite{B13} and \cite{B15}) parametrized by a Banach analytic set.

 This answers a question asked to me  by Y-T. Siu forty years ago.

 I was able to solve this question thanks to the notion of \textsf{f}-analytic family introduced in {\it loc. cit.} and using the space $\mathcal{C}_{n}^{\mathsf{f}}(Z)$ of finite type cycles with its natural topology.

\medskip

 We obtain the following results.

 \begin{thm}\label{germe}
 Let $n \geq 2$ be an integer. Let $Z$ be a strongly $(n-2)$-concave reduced complex space of pure dimension $n+p$  that is to say admitting a  $\mathscr{C}^{2}$ exhaustion function  $\varphi : Z \to ]0, 2]$ which is  strongly $(n-2)$-convex outside  the compact set  $K := \varphi^{-1}([1, 2])$. For any  $\alpha \in ]0, 1[$ and   any $n$-cycle $X_{0}$ in an open neighbourhood of the compact set  $\varphi^{-1}([\alpha, 2])$ there exists $\beta \in ]0, \alpha[$ such that, if $Z_{\beta} := \{ z \in Z \ / \  \varphi(z) >  \beta \}$, the cycle $X_{0}$ extends in a unique way to the open set $Z_{\beta}$ and admits an open neighbourhood $\mathcal{U}$ in the  space $\mathcal{C}_{n}^{\mathsf{f}}(Z_{\beta})$ such that the ringed space defined by $\mathcal{U}$ and the sheaf of holomorphic functions on $\mathcal{U}$ is a (reduced) complex space locally of finite dimension.
   \end{thm}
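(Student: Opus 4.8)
The plan is to reduce the assertion to a statement about families of classifying maps into symmetric products, and then to extract the finite dimension from the concavity through a compactness argument. First I would dispose of the extension part. The cycle $X_0$ lives on a neighbourhood of the compact set $\varphi^{-1}([\alpha,2]) = \{\varphi \geq \alpha\}$, and I would push each irreducible component of its support across the collar $\{\beta < \varphi \leq \alpha\}$ toward the concave end, using the extension theorem for analytic sets of dimension $n$ across a strongly $(n-2)$-concave boundary, the multiplicities being carried along unchanged. Because $\varphi$ is strongly $(n-2)$-convex on $\{\varphi < 1\}$, I can choose $\beta \in ]0,\alpha[$ close enough to $\alpha$ so that every component extends to $Z_\beta$; uniqueness of the resulting $n$-cycle on $Z_\beta$ follows from the identity principle for analytic subsets applied component by component. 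This yields the unique extension of $X_0$ to $Z_\beta$ asserted in the statement.

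Next I would set up local charts. Shrinking $\beta$ if necessary, I would cover a neighbourhood of the extended support $|X_0| \cap Z_\beta$ by finitely many scales $E_i = U_i \times B_i$ adapted to $X_0$, where $U_i$ is an $n$-dimensional polydisc, $B_i$ a $p$-dimensional polydisc, and the projection $E_i \to U_i$ is proper and finite of degree $k_i$ on $|X_0|$; the finite-type hypothesis built into $\mathcal{C}_n^{\mathsf{f}}(Z_\beta)$ is what makes a finite family suffice. Every cycle $X$ close enough to $X_0$ stays adapted to each $E_i$ and determines a classifying map $f^{(i)}_X : U_i \to \Sym^{k_i}(B_i)$. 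By the isomorphism theorem for $\mathsf{f}$-analytic families of \cite{B08} (see also \cite{B13} and \cite{B15}), the assignment $X \mapsto (f^{(i)}_X)_i$ realizes a neighbourhood $\mathcal{U}$ of $X_0$ as the subset of a product of Banach spaces $\prod_i \mathcal{O}(\overline{U}_i, \Sym^{k_i}(B_i))$ cut out by the holomorphy of each $f^{(i)}_X$ together with the compatibility conditions over the overlaps $E_i \cap E_j$. All these conditions are Banach-analytic, so $\mathcal{U}$ inherits the structure of a reduced Banach analytic set, and $(\mathcal{U}, \mathcal{O}_{\mathcal{U}})$ is the associated reduced ringed space.

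The hard part will be upgrading this Banach analytic model to one of finite dimension, and here the $(n-2)$-concavity is indispensable. The mechanism I would use is local compactness: I claim that, after shrinking $\mathcal{U}$, any sequence $(X_j)$ in $\mathcal{U}$ has a subsequence converging in $\mathcal{C}_n^{\mathsf{f}}(Z_\beta)$. The strong $(n-2)$-convexity of $\varphi$ bounds the way an $n$-dimensional cycle can meet the level sets of $\varphi$, and combined with the uniqueness of the extension across the concave collar it yields a uniform bound on the volume $\int_{|X| \cap \{\varphi \geq \beta'\}} \omega^n$ for all $X \in \mathcal{U}$ and a fixed $\beta' \in ]\beta,\alpha[$. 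Bishop's compactness theorem then furnishes the convergent subsequences, and the finite-type structure guarantees that the limits again lie in $\mathcal{C}_n^{\mathsf{f}}(Z_\beta)$. Thus $\mathcal{U}$ is locally compact; since a reduced Banach analytic set is locally compact only when it is locally of finite dimension (a ball of positive dimension in a Banach space is never relatively compact), $(\mathcal{U}, \mathcal{O}_{\mathcal{U}})$ is a reduced complex space locally of finite dimension.

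The two delicate points I would have to treat with care are the simultaneous choice of $\beta$ ensuring both the extension and the uniform volume estimate, and the verification that the uniform volume bound genuinely follows from the convexity of $\varphi$ on $\{\varphi < 1\}$; the latter is the true analytic core of the argument, and I expect it to require a maximum-principle or monotonicity estimate for $\varphi$ restricted to the nearby cycles. Once finite dimensionality is in hand, the representability of the family functors announced in the introduction follows formally from the universal property already encoded in the $\mathsf{f}$-analytic structure of $\mathcal{C}_n^{\mathsf{f}}(Z_\beta)$.
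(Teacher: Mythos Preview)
Your extension step and the Banach-analytic local model are broadly in line with the paper, but the mechanism you propose for finite dimensionality is not the one that works, and as stated it has a real gap.

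You try to obtain local compactness of $\mathcal{U}$ from a uniform volume bound on nearby cycles, appealing to Bishop's theorem. But no such bound is available from the hypotheses of the theorem: strong $(n-2)$-concavity gives you analytic \emph{extension} across the boundary (via Hartogs-type phenomena), not a priori control of mass. In the paper, volume bounds appear only in Proposition~\ref{compacite}, and there under the \emph{additional} assumption of a $d$-closed compactly supported $2n$-form whose $(n,n)$ part is positive near $K$; this hypothesis is absent here. The monotonicity/maximum-principle estimate you hope for (``the true analytic core'') is not supplied, and there is no reason to expect it in this generality. So the step ``uniform volume bound $\Rightarrow$ Bishop $\Rightarrow$ local compactness $\Rightarrow$ finite dimension'' breaks at its first link.

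The paper's route to finiteness is different and worth internalising. One covers $\partial\Delta$ by finitely many $n$-Hartogs figures adapted to $X_0$ (Proposition~\ref{Rec.M.H.}) and builds three Banach analytic classifying spaces: $S_-$ for cycles in $\Delta$, $S_+$ for cycles in a slightly larger $\Delta''$, and an intermediate $S_0$. The Hartogs extension of Proposition~\ref{prlgt isotrope} gives a holomorphic map $\alpha:S_-\to S_0$; set $\Xi:=\alpha^{-1}(S_+)$. The restriction $\beta:S_+\to\Xi$ is a holomorphic inverse to $\alpha$ near $X_0$. The decisive point is that $\beta$ factors through the $\tau$-shrinkage $S_+\to S_+^{\tau}$ of the adjusted scales, and this shrinkage is induced by a \emph{compact} linear map (Vitali/Montel on slightly smaller polydiscs). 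Thus the identity on $\Xi$ factors holomorphically through a compact linear map, and the finiteness lemma of \cite[p.\,8]{B75} forces $\Xi$ to be a reduced complex space of finite dimension. No volume estimate enters; the compactness is purely function-theoretic, coming from restriction to a relatively compact polydisc, and the concavity is used only to produce the holomorphic extension $\alpha$ that closes the loop $\beta\circ\alpha=\mathrm{Id}$.
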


   Recall that a holomorphic function $h : \mathcal{U} \to \C$ on an open set in $\mathcal{C}_{n}^{\mathsf{f}}(Z_{\beta})$ is a continuous function on $\mathcal{U}$ such that for any holomorphic map $f : S \to \mathcal{U}$ (corresponding to an \textsf{f}-analytic family of $n$-cycles in $Z$, see {\it loc. cit.}) of a Banach analytic set $S$ to $\mathcal{U}$ the composed function $h\circ f$ is holomorphic.

  \begin{thm}\label{global}
  Consider the same situation as in the previous theorem, and let now $X_{0} \in \mathcal{C}_{n}^{\mathsf{f}}(Z)$ be a finite type $n$-cycle in $Z$. Then there exists $\beta \in ]0,1[$ and open neighbourhoods  $\mathcal{V}$  and $\mathcal{U}$ respectively of  $X_{0}$ in $\mathcal{C}_{n}^{\mathsf{f}}(Z)$ and of  $X_{0} \cap Z_{\beta}$ in $\mathcal{C}_{n}^{\mathsf{f}}(Z_{\beta})$ such that the restriction  map
   $$ r : \mathcal{V} \to \mathcal{U} $$
   is well defined and  bi-holomorphic.
    \end{thm}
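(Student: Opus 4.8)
The natural candidate is the restriction map itself, which sends a finite type $n$-cycle $X$ in $Z$ to the finite type $n$-cycle $X \cap Z_{\beta}$ in the open subset $Z_{\beta}$; the plan is to choose $\beta$ and the neighbourhoods so that $r$ becomes a homeomorphism whose inverse is the extension of cycles across the concave shell $\{0 < \varphi \le \beta\}$, and then to verify that both $r$ and that extension respect \textsf{f}-analytic families. First I would fix some $\alpha \in ]0,1[$ and apply Theorem \ref{germe} to the germ of $X_{0}$ along a neighbourhood of $\varphi^{-1}([\alpha,2])$, obtaining $\beta \in ]0,\alpha[ \subset ]0,1[$ together with a neighbourhood $\mathcal{U}$ of $X_{0}\cap Z_{\beta}$ in $\mathcal{C}_{n}^{\mathsf{f}}(Z_{\beta})$ carrying the structure of a reduced complex space of finite dimension. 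By the uniqueness clause of Theorem \ref{germe} the extension of that germ to $Z_{\beta}$ is exactly $X_{0}\cap Z_{\beta}$, so the two pictures are compatible. Setting $\mathcal{V} := r^{-1}(\mathcal{U})$, continuity of restriction for the \textsf{f}-topologies makes $\mathcal{V}$ an open neighbourhood of $X_{0}$ in $\mathcal{C}_{n}^{\mathsf{f}}(Z)$ and $r : \mathcal{V} \to \mathcal{U}$ well defined.

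For injectivity I would exploit the geometry of the concave end. On $\{\varphi < 1\}$ the function $\varphi$ is strongly $(n-2)$-convex, so its restriction to the smooth locus of any $n$-dimensional analytic subset still has a positive Levi eigenvalue at every point; by the maximum principle $\varphi$ admits no local maximum there, and together with the finite type hypothesis this forbids any irreducible component of a cycle of $\mathcal{V}$ from being contained in the shell $\{0 < \varphi \le \beta\}$. Hence every component meets $Z_{\beta}$ and is the closure in $Z$ of its trace on $Z_{\beta}$, with the same multiplicity, so $X$ is determined by $X \cap Z_{\beta}$ and $r$ is injective. The same observation shows that the unique extension furnished by Theorem \ref{germe} depends only on the trace on $Z_{\beta}$, so an extension map $e : \mathcal{U} \to \mathcal{C}_{n}^{\mathsf{f}}(Z)$ with $e(Y)\cap Z_{\beta} = Y$ is well defined; after shrinking $\mathcal{U}$ (and correspondingly $\mathcal{V}$) so that $e(\mathcal{U}) \subset \mathcal{V}$, the maps $r$ and $e$ become mutually inverse homeomorphisms.

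It then remains to check that $r$ and $e$ are holomorphic in the functorial sense, i.e.\ that they carry \textsf{f}-analytic families to \textsf{f}-analytic families. For $r$ this is immediate: restricting an \textsf{f}-analytic family of cycles in $Z$ to the open subset $Z_{\beta}$ is again \textsf{f}-analytic, so $r \circ f$ is holomorphic whenever $f : S \to \mathcal{V}$ is. The genuine difficulty, and the step I expect to be the main obstacle, is the holomorphy of the extension $e$: one must show that the extension across the concave shell of an \textsf{f}-analytic family parametrized by a Banach analytic set is again \textsf{f}-analytic. This is precisely the relative, family version of the extension mechanism built into the proof of Theorem \ref{germe}; applied to the tautological family over the complex space $\mathcal{U}$ it should produce a holomorphic map $\mathcal{U} \to \mathcal{C}_{n}^{\mathsf{f}}(Z)$ factoring through $\mathcal{V}$ and inverse to $r$. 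Granting this, $r : \mathcal{V} \to \mathcal{U}$ is a bijective bi-holomorphic map, which is the assertion; as a by-product the complex space structure of $\mathcal{U}$ transports through $e$ to $\mathcal{V}$, yielding the local finiteness of $\mathcal{C}_{n}^{\mathsf{f}}(Z)$ near $X_{0}$.
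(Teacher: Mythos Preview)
Your outline matches the paper's in spirit---set up the restriction $r$, argue it is a bijective homeomorphism, then show both directions are holomorphic---but the crucial step, the holomorphy of the inverse $e$, is not correctly handled, and this is a genuine gap.

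You propose to obtain the holomorphy of $e:\mathcal{U}\to\mathcal{C}_{n}^{\mathsf{f}}(Z)$ as ``the relative, family version of the extension mechanism built into the proof of Theorem~\ref{germe}''. But that mechanism (equivalently Theorem~\ref{Th. de finitude}) only extends an \textsf{f}-analytic family from $Z_{\alpha}$ across \emph{one} concave boundary to a slightly larger $Z_{\beta}$ with $\beta$ close to~$\alpha$. To reach all of $Z$ you would have to iterate, and nothing in your argument controls the step size or guarantees that the sequence $\beta_{k}$ tends to $0$; so the ``family Hartogs'' route, as stated, does not close. The paper does \emph{not} iterate. Instead it introduces a separate analytic extension criterion (Theorem~\ref{ncycles}): an \textsf{f}-continuous family of $n$-cycles in $Z$, parametrized by a \emph{reduced complex space}, which is \textsf{f}-analytic on an open set meeting every irreducible component of every cycle, is \textsf{f}-analytic on all of $Z$. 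This criterion is then applied to the tautological family on the finite-dimensional space $\Xi\simeq\mathcal{U}$, with the open set $Z_{\beta}\supset K$; its proof uses Hahn--Banach and the criterion 3.1.7 of \cite{BM1}, not Hartogs figures.

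A second point you skip is the continuity of $e$ (equivalently, that $r$ is a homeomorphism before any holomorphy is discussed). In the paper this is Proposition~\ref{type fini}: the restriction $\mathcal{C}_{n}^{\mathsf{f}}(Z)\to\mathcal{C}_{n}^{\mathsf{f}}(Z_{\beta})$ is a homeomorphism for every $\beta\in]0,1[$. Its proof combines the global cycle extension Theorem~\ref{prlgt global}, the observation that every irreducible $n$-cycle meets $K$, and a limiting argument (Lemma~\ref{petit lemme}); you assert the homeomorphism but do not supply this. Note also that Theorem~\ref{ncycles} genuinely needs $S$ to be a reduced complex space of finite dimension, which is exactly why one first invokes Theorem~\ref{Th. de finitude} to know that $\mathcal{U}$ has this property; your remark that the extension should work for ``a Banach analytic set'' is too optimistic (the paper explicitly warns that the analogue fails for general singular Banach analytic parameters).
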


    We are going to  recall briefly the notion of \textsf{f}-analytic family of finite type $n$-cycles in a complex space $Z$.

    Firstly the notion of an analytic family of $n$-cycles in a reduced complex space $Z$ parametrized by a reduced complex space $S$ is defined as follows (see\footnote{See Chapter 3 Section 4 in \cite{B75} for the case when $S$ is a Banach analytic set.} \cite[Chapter I, p. 33]{B75}   or \cite[Chapter IV, Section 3]{BM1}), using the following notion of an adapted scale (see \cite[Chapter IV, Section 2.1]{BM1}).

    \begin{defn}\label{adapted scale} \rm
    We call $E := (U, B, j)$ a {\bf $n$-scale on a complex space $Z$} when $U$ and $B$ are open relatively compact polydiscs respectively in $\C^{n}$ and $\C^{p}$ and where $j: Z_{E} \to W$ is a closed embedding of an open set $Z_{E}$ in $Z$ into an open neighbourhood $W$ of $\bar U\times \bar B$ in $\C^{n+p}$. The open set $Z_{E}$ is called the {\bf domain} of $E$ and the open set $j^{-1}(U\times B)$ the {\bf center} of $E$. When $X$ is a $n$-cycle in $Z$, the $n$-scale $E$ is {\bf adapted} to $X$ when $\vert X \vert \cap j^{-1}(\bar U\times \partial B) = \emptyset$.
    \end{defn}

    Note that when $E$ is a $n$-scale adapted to a $n$-cycle $X$ in $Z$, the projection of $U\times B$ on $U$ restricted to $j_{*}(X \cap j^{-1}(U\times B))$ gives a finite proper map of degree $k \geq 0$ and the fibers of this map are classified by a holomorphic map $f : U \to \Sym^{k}(B)$. In this case we shall say that $f$ is the {\bf classifying map} of the cycle $X$ in the adapted scale $E$.

    \begin{defn}\label{famille analytique} \rm
    Let $Z$ be a complex space and let $(X_{s})_{s \in S}$ be a family of $n$-cycles in $Z$ parametrized by a reduced complex space $S$. We shall say that this family is {\bf analytic at a point $s_{0} \in S$} if for any  $n$-scale $E := (U, B, j)$ on $Z$ which is adapted to the cycle $X_{s_{0}}$ there exists an open neighbourhood $S_{0}$ of $s_{0}$ in $S$ satisfying the following properties:
    \begin{enumerate}[\rm i)]
    \item For each $s \in S_{0}$ the scale $E$ is adapted to $X_{s}$.
    \item Assume that $k := \deg_{E}(X_{s_{0}})$. Then for each $s \in S_{0}$ we have $\deg_{E}(X_{s}) = k$.
    \item There exists a holomorphic map $f : S_{0}\times U \to \Sym^{k}(B)$ such that for each $s \in S_{0}$ the restriction of $f$ to $\{s_{0}\}\times U$ classifies the cycle $X_{s}$ in the scale $E$.
    \end{enumerate}
    \end{defn}

    It is easy to see that an analytic family of cycles has a ``set theoretic'' graph
     $$\vert G \vert := \{(s, x) \in S \times Z \ / \  x \in \vert X_{s}\vert \}$$
      which is a closed analytic subset in $S \times Z$ and that its projection on $S$ has pure $n$-dimensional fibers (which are the supports of the cycles). When we have an analytic family $(X_{s})_{s \in S}$ and when  the  projection of its graph  $pr : \vert G \vert \to S$ is quasi-proper\footnote{This means, by definition, that for any $s_{0}\in S$ there exists an open neighbourhood $S_{1}$ of $s_{0}$ in $S$ and a compact set $K$ in $\vert G\vert$ such that  any irreducible component of any fiber $pr^{-1}(s)$ for any $s \in S_{1}$ meets $K$.} we shall say that $(X_{s})_{s \in S}$ is a {\bf \textsf{f}-analytic family} of (finite type) $n$-cycles in $Z$ . Of course this condition implies that each cycle $X_{s}$ is a finite type $n$-cycle (it means that each cycle  admits only finitely many irreducible components) but this condition contains this fact in a local uniform manner on $S$.

      In an analogous way, when we have an analytic family $(X_{s})_{s \in S}$  and when  the  projection of its graph  $pr : \vert G \vert \to S$ is proper, we shall say that it is a {\bf proper analytic family of compact cycles in $Z$}. 

\medskip

    The following corollary is of course the main result.

  \begin{cor}\label{important}
   Consider the same situation as in the previous theorems. Then the ringed space given by the sheaf of holomorphic functions on $\mathcal{C}_{n}^{\mathsf{f}}(Z)$ is a reduced complex space locally of finite dimension. Moreover, endowed with its tautological family of  (finite type) $n$-cycles it represents the functor
\begin{eqnarray*} 
\mbox{\rm (reduced complex spaces)} & \to & \mbox{\rm (sets)} \\
  S & \mapsto & \{\mbox{\rm $\mathsf{f}$-analytic family of $n$-cycles in $Z$ parametrized by $S$}\}.
\end{eqnarray*}
     \end{cor}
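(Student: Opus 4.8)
The plan is to derive the Corollary from Theorems \ref{germe} and \ref{global} by a gluing argument combined with an unwinding of the definition of the structure sheaf, in three stages: construct the global complex structure, check that the tautological family is \textsf{f}-analytic, and verify the universal property.

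First I would build the complex structure. Fix $X_{0} \in \mathcal{C}_{n}^{\mathsf{f}}(Z)$. By Theorem \ref{global} there is an open neighbourhood $\mathcal{V}_{X_{0}}$ of $X_{0}$ together with a bi-holomorphic restriction map $r$ onto an open neighbourhood $\mathcal{U}_{X_{0}}$ of $X_{0}\cap Z_{\beta}$ in $\mathcal{C}_{n}^{\mathsf{f}}(Z_{\beta})$, and by Theorem \ref{germe} the latter is a reduced complex space locally of finite dimension whose structure sheaf is exactly the sheaf of holomorphic functions defined through \textsf{f}-analytic families. Transporting this structure along $r$ equips each $\mathcal{V}_{X_{0}}$ with such a structure. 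The decisive point is that this structure sheaf is defined intrinsically on $\mathcal{C}_{n}^{\mathsf{f}}(Z)$: on any overlap $\mathcal{V}_{X_{0}}\cap \mathcal{V}_{X_{1}}$ the two transported sheaves both coincide with the restriction of the intrinsic sheaf, so the charts are automatically compatible and glue to a global structure of reduced complex space locally of finite dimension.

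Next I would verify that the tautological family carried by $\mathcal{C}_{n}^{\mathsf{f}}(Z)$ is \textsf{f}-analytic. Its set-theoretic graph $\{(X,x) \ / \ x \in \vert X\vert\}$ is a closed analytic subset of $\mathcal{C}_{n}^{\mathsf{f}}(Z)\times Z$ with pure $n$-dimensional fibres; analyticity at every point, in the sense of Definition \ref{famille analytique}, can be tested in the adapted scales of Definition \ref{adapted scale} and reduces in each chart to the situation of Theorem \ref{germe}. Since quasi-properness of the graph projection is a condition local on the base, it suffices to check it chart by chart, where it follows from the finite type nature of the cycles together with the local uniformity built into the analyticity of the family.

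Finally I would establish the universal property. Given an \textsf{f}-analytic family $(X_{s})_{s\in S}$ over a reduced complex space $S$, set $\Phi : S \to \mathcal{C}_{n}^{\mathsf{f}}(Z)$, $\Phi(s) = X_{s}$; this map is continuous and, by the very definition of the structure sheaf, $h\circ \Phi$ is holomorphic for every local holomorphic function $h$, so $\Phi$ is a morphism of complex spaces. Pulling back the tautological family through $\Phi$ returns $(X_{s})_{s\in S}$ tautologically, while conversely the pullback of the tautological family by any holomorphic map is again \textsf{f}-analytic; the two assignments are therefore mutually inverse and natural in $S$. Uniqueness of $\Phi$ is forced, since $\Phi(s)=X_{s}$ is determined set-theoretically and, the spaces being reduced, a morphism between them is determined by its underlying continuous map. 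I expect the main obstacle to lie in the second stage: in contrast with the purely formal universal property, the \textsf{f}-analyticity of the tautological family --- above all the quasi-properness of its graph projection --- is where the strong $(n-2)$-concavity is genuinely used, through the finiteness furnished by Theorems \ref{germe} and \ref{global}.
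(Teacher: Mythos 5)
Your stages one and three are essentially sound: the gluing is indeed automatic precisely because the structure sheaf on $\mathcal{C}_{n}^{\mathsf{f}}(Z)$ is intrinsic, and once the complex structure exists and the tautological family is known to be $\mathsf{f}$-analytic, the representability argument via the definition of the sheaf of holomorphic functions and the reducedness of the spaces is the correct (and essentially formal) one. The genuine gap is in stage two, exactly where you yourself located the main obstacle, and it is twofold.

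First, the analyticity of the tautological family does not ``reduce in each chart to the situation of Theorem \ref{germe}''. That theorem, as stated, only asserts that the ringed space $(\mathcal{U},\mathcal{O}_{\mathcal{U}})$ is a reduced complex space; it says nothing about the family of cycles carried by $\mathcal{U}$ being analytic, and there is no formal way to extract that from the bare statement. In the paper this content is supplied by Theorem \ref{Th. de finitude}, which produces the finite-dimensional space $\Xi$ \emph{together with} an $\mathsf{f}$-analytic tautological family of cycles in a neighbourhood $\Delta''$ of $\bar\Delta$, identified (via Proposition \ref{type fini}) with a neighbourhood of $X_{0}$ in $\mathcal{C}_{n}^{\mathsf{f}}(Z)$; and then, to pass from analyticity of the restricted cycles in $Z_{\beta}$ to analyticity of the full cycles in $Z$, the paper invokes the analytic extension criterion of Theorem \ref{ncycles} (a substantive result, resting on Proposition \ref{nfonction} and a Hahn--Banach argument): this is precisely what makes $res_{0}^{-1}$ holomorphic in Section 4.4. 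Second, your justification of quasi-properness is incorrect: finite type of the individual cycles plus analyticity of the family does \emph{not} imply quasi-properness of the graph --- the failure of exactly this implication is why the notion of $\mathsf{f}$-analytic family is introduced at all. The correct mechanism, given in the Remark following the Corollary in the paper, is the concavity acting through a maximum principle: the restriction of the exhaustion $\varphi$ to any irreducible $n$-dimensional analytic subset attains its supremum, and this cannot happen at a point where $\varphi$ is strongly $(n-2)$-convex (the Levi form there has at most $n-2$ nonpositive eigenvalues, hence restricts to a form with a positive eigenvalue on the $n$-dimensional subset), so every irreducible component of every $n$-cycle must meet the single compact set $K=\varphi^{-1}([1,2])$. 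It is this one uniform compact set, not any property of the individual cycles, that makes the graph projection of \emph{every} analytic family of $n$-cycles in $Z$ quasi-proper, and hence the tautological family $\mathsf{f}$-analytic.
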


     \parag{Remark} Let $X$ be a (non empty) irreducible analytic subset of dimension $n$  in a strongly $(n-2)$-concave complex space  $Z$ as in Theorem \ref{germe}. Let $x_{0}\in X$ be a point in $X$ where the supremum of the restriction of the exhaustion  function $\varphi$ to $X$ is obtained. Then the point $x_{0}$ is in the compact set $K =  \varphi^{-1}([1, 2])$ because the Levi form of $\varphi$ at the point $x_{0}$ has at least $n$  non positive eigenvalues. So any (non empty) irreducible analytic subset  of dimension $n$ in $Z$ has to meet the compact set $K$. Then any $n$-cycle in $Z$ is of finite type and any  analytic family of $n$-cycles in $Z$ has a quasi-proper graph so  is a \textsf{f}-analytic family. This implies that, in the previous corollary, the obvious map\footnote{The set $\mathcal{C}_{n}^{loc}(Z)$ is the set of all closed cycles of dimension $n$ in the complex space $Z$; its natural topology which  is associated to the adapted scales is described in  \cite[Chapter IV]{BM1}. In general, the inclusion  map $\mathcal{C}_{n}^{\mathsf{f}}(Z) \to \mathcal{C}_{n}^{loc}(Z)$ is continuous but is not a homeomorphism onto its image. See \cite{B15}  for the comparison between compact sets in these two spaces.} $\mathcal{C}_{n}^{\mathsf{f}}(Z) \to \mathcal{C}_{n}^{loc}(Z)$  is an isomorphism  of ringed spaces and  $\mathcal{C}_{n}^{\mathsf{f}}(Z)$ represents also the functor ``analytic family of $n$-cycles in $Z$''.

     \parag{Question} As it appears in the previous remark, we may expect the same result for $(n-1)$-cycles under our assumption of strong $(n-2)$-concavity. But our way to use Hartogs figures in the present article needs one more positive eigenvalue than one can expect. Is the result also true for $(n-1)$-cycles under our hypothesis? It would probably be interesting, for instance, to have this kind of result for $1$-cycles in a strongly $0$-concave complex space.

\parag{Acknowledgments} I want to thank the referees for many interesting comments and suggestions to improve this article.

 \section{Hartogs figures}

\subsection{Banachization}

 For the analytic extension via Hartogs figures, the use of the Banach spaces $H(\bar U, \C)$  of continuous functions, holomorphic inside on a compact polydisc $\bar U$ is not well adapted. We shall use the Banach space $\B(U, \C)$ of bounded holomorphic functions on $U$ with the ``sup'' norm on $U$. Of course, $H(\bar U, \C)$ is a closed Banach subspace in $\B(U, \C)$.

 \begin{prop}\label{banach borne}
 Let $U$ be a relatively compact polydisc in $\C^{n}$ and let $S$  be a Banach analytic set. Let $F : S \to \B(U, \C)$ be a holomorphic map. Then the corresponding   function $f : S \times U \to \C$ defined by  $f(s,t) := F(s)[t]$ for $(s,t)$ in $ S\times U$ is holomorphic (and locally  on $S$ uniformely bounded on $U$).

 Conversely, if we have a holomorphic function $f : S \times U \to \C$ and an open  polydisc $U''\subset\subset U$, the associated map $F : S \to \B(U'', \C)$, defined for $(s,t)$ in  $S\times U''$ by  $F(s)[t] := f(s, t)$, is holomorphic.
  \end{prop}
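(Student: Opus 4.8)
The plan is to reduce both implications to the case where the Banach analytic set $S$ is an open subset $\Omega$ of a Banach space $E$, and then to apply the standard characterization of holomorphy for Banach-valued maps: a locally bounded map from an open set of a Banach space into a Banach space is holomorphic if and only if it is G\^ateaux holomorphic. The reduction is legitimate because, by definition, a map on an open subset of a Banach analytic set is holomorphic exactly when it is locally the restriction of a holomorphic map defined on an ambient open set $\Omega \subset E$; since both assertions are local on $S$ and the restriction to $S \times U$ (resp.\ to $S$) of a holomorphic map is again holomorphic, it suffices to prove each direction after replacing $S$ by such an $\Omega$.

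For the direct implication I would first record that $f$ is separately holomorphic and locally bounded. For fixed $t \in U$ the evaluation $\mathrm{ev}_{t} : \B(U,\C) \to \C$ is continuous and linear, so $s \mapsto f(s,t) = \mathrm{ev}_{t}(F(s))$ is holomorphic; for fixed $s$ the map $t \mapsto f(s,t) = F(s)[t]$ is holomorphic because $F(s)$ lies in $\B(U,\C)$. Continuity of $F$ yields a local bound $\norm{F(s)} \leq M$, hence $\abs{f(s,t)} \leq M$ for all $t$, which is local boundedness of $f$ (and the asserted local uniform bound on $U$). To pass from separate to G\^ateaux holomorphy along an arbitrary direction $(b,c) \in E \times \C^{n}$, I would restrict $f$ to the complex affine plane parametrized by $(\lambda,\mu) \mapsto f(s_{0}+\lambda b,\, t_{0}+\mu c)$: this is a locally bounded, separately holomorphic function of two complex variables, hence jointly holomorphic by the classical Hartogs theorem, and its restriction to the diagonal $\mu = \lambda$ is holomorphic in $\lambda$. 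G\^ateaux holomorphy together with local boundedness then gives that $f$ is holomorphic on $\Omega \times U$.

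For the converse, note first that $F$ is well defined: for each $s$ the slice $t \mapsto f(s,t)$ is holomorphic on $U$, hence bounded on the relatively compact $U'' \subset\subset U$, so $F(s) \in \B(U'',\C)$. Working again over $\Omega \subset E$, I would verify the two conditions of the criterion above. Local boundedness of $F$ follows from continuity of $f$ on $\Omega \times U$ together with compactness of $\overline{U''} \subset U$: a finite covering argument yields $\sup_{t \in U''}\abs{f(s,t)} \leq M$ uniformly for $s$ near a base point, that is $\norm{F(s)} \leq M$ in $\B(U'',\C)$. For G\^ateaux holomorphy along a direction $b \in E$, I would expand $\lambda \mapsto f(s_{0}+\lambda b, t)$ and read off its Taylor coefficients by the Cauchy formula $c_{k}(t) = \frac{1}{2\pi i}\oint_{\abs{\zeta}=r} \zeta^{-k-1} f(s_{0}+\zeta b, t)\, d\zeta$; the strict inclusion $U'' \subset\subset U$ guarantees the uniform bound $M = \sup_{\abs{\zeta}=r,\, t \in U''}\abs{f(s_{0}+\zeta b, t)} < \infty$, so the Cauchy estimates $\norm{c_{k}} \leq M r^{-k}$ make $\sum_{k} c_{k}\lambda^{k}$ a norm-convergent $\B(U'',\C)$-valued power series representing $F(s_{0}+\lambda b)$. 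G\^ateaux holomorphy and local boundedness then give that $F : \Omega \to \B(U'',\C)$ is holomorphic, and restriction to $S$ concludes.

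The crux of the argument is the passage between scalar joint holomorphy of $f$ and genuine Banach-valued holomorphy of $F$: the only linear forms on $\B(U'',\C)$ that are directly controlled are the evaluations $\mathrm{ev}_{t}$, so one cannot simply invoke weak holomorphy against the full dual, and local boundedness must do the work of bridging this gap. It is exactly here that the hypotheses are used sharply: the relative compactness of $\overline{U''}$ in $U$ converts pointwise holomorphy and continuity of $f$ into the uniform estimates needed for norm convergence, while in the direct implication the classical two-variable Hartogs phenomenon supplies the missing joint holomorphy. Beyond organizing these standard facts I expect no further difficulty.
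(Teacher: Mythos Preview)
Your direct implication is correct, though more elaborate than necessary: the paper simply observes that the evaluation map $ev : \B(U,\C)\times U \to \C$ is holomorphic, so $f = ev \circ (F\times \mathrm{id}_U)$ is a composition of holomorphic maps. Your route through separate holomorphy, two-variable Hartogs on affine slices, and the G\^ateaux-plus-local-boundedness criterion works, but is in effect a reproof of the holomorphy of $ev$.

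The converse, however, has a genuine gap in the reduction step. You claim it suffices to replace $S$ by an ambient open set $\Omega\subset E$, on the grounds that holomorphy is local and defined by local ambient extensions. For the direct implication this is fine: the hypothesis is that $F:S\to\B(U,\C)$ is holomorphic, so $F$ extends to $\tilde F$ on some $\Omega$, and you work with $\tilde f(s,t)=\tilde F(s)[t]$ on $\Omega\times U$. But for the converse the hypothesis is that $f:S\times U\to\C$ is holomorphic, and the definition only provides extensions that are local \emph{in both variables}: near each $(s_0,t_0)$ you get a holomorphic $\tilde f_{t_0}$ on some $\Omega_{t_0}\times V_{t_0}$. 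Covering $\overline{U''}$ by finitely many $V_{t_i}$ and intersecting the $\Omega_{t_i}$ gives a common $\Omega$, but the extensions $\tilde f_{t_i}$ need not agree on $\Omega\times(V_{t_i}\cap V_{t_j})$; they agree only on $(S\cap\Omega)\times(V_{t_i}\cap V_{t_j})$, and for a general Banach analytic set $S$ no identity principle forces them to coincide off $S$. So you cannot simply ``replace $S$ by $\Omega$'' and then run your Cauchy-estimate argument for $\lambda\mapsto f(s_0+\lambda b,t)$, since $s_0+\lambda b$ need not lie in $S$ and no globally defined $\tilde f$ is available.

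The paper avoids this by factoring through the closed subspace $H(\bar U'',\C)\subset \B(U'',\C)$ and invoking the known result (from \cite{B75} or \cite{BM2}) that $f$ holomorphic on $S\times U$ implies $s\mapsto f(s,\cdot)\in H(\bar U'',\C)$ is holomorphic. One way that result can be proved is precisely by working around the patching problem: cover the distinguished boundary of an intermediate polydisc $U'$ (with $\bar U''\subset U'\subset\subset U$) by finitely many $V_i$, combine the local ambient extensions via a partition of unity on that torus to get a map $\Omega\to C(\partial_0 U')$ holomorphic in $s$, and then compose with the bounded linear Cauchy integral operator $C(\partial_0 U')\to H(\bar U'',\C)$. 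The resulting $\tilde F$ restricts to $F$ on $S$ because on $S$ the partition-of-unity combination collapses back to $f(s,\cdot)$, whose Cauchy integral recovers $f$. Your argument can be repaired along these lines, but as written the reduction is not justified.
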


\begin{proof} 
The evaluation function $ev : \B(U, \C) \times U \to \C$ is holomorphic as one may easily see by differentiation. Then the function $f$  associated to $F$ is the composition of the holomorphic maps $F\times id_{U}$ and $ev$. So it is holomorphic.

The converse is consequence of the linear isometric inclusion of $H(\bar U'', \C)$ in $\mathcal{B}(U'', \C)$ as  $F'' : S \to H(\bar U'', \C)$ is holomorphic as soon as $f$ is holomorphic (see  \cite{B75} or \cite{BM2}).
\hfill $\Box$
\end{proof}

 \subsection{$n$-Hartogs figure on a complex space}

\label{2.2}

For $\alpha \in (\mathbb{R}_{+}^{*})^2$ let $M(\alpha)$ be the open set in $\mathbb{C}^2$ defined by
\begin{align*}
& M(\alpha) : = M^P(\alpha) \cup M^C(\alpha) \quad {\rm with}\\
& M^P(\alpha) : = \big\{ \vert t_1 - \alpha_1/2\vert < \alpha_1/4, \quad \vert t_2\vert < \alpha_2 \big\} \\
& M^C(\alpha) : = \big\{ \vert t_1 \vert < \alpha_1, \quad \alpha_2/2 < \vert t_2\vert < \alpha_2 \big\}
 \end{align*}
 and let also $\mathcal{M}(\alpha) : = \big\{(t_1,  t_2) \in \mathbb{C}^2 \ / \  \vert t_i\vert < \alpha_i, \ i = 1, 2 \big\}$.

 For  $\varepsilon > 0$ small enough, we define
 \begin{align*}
& M(\alpha)^{\varepsilon} : = M^P(\alpha)^{\varepsilon}  \cup M^C(\alpha)^{\varepsilon}  \quad {\rm with}\\
& M^P(\alpha)^{\varepsilon}  : = \big\{ \vert t_1 - \alpha_1/2\vert < \alpha_1/4- \varepsilon/4, \quad \vert t_2\vert < \alpha_2 - \varepsilon \big\} \\
& M^C(\alpha)^{\varepsilon}  : = \big\{ \vert t_1 \vert < \alpha_1-\varepsilon, \quad \alpha_2/2 + \varepsilon/2 < \vert t_2\vert < \alpha_2- \varepsilon \big\}
 \end{align*}
and also $\mathcal{M}(\alpha)^{\varepsilon}  : = \big\{(t_1,  t_2) \in \mathbb{C}^2 \ / \  \vert t_i\vert < \alpha_i- \varepsilon, \ i = 1, 2 \big\}$.

\medskip

For a polydisc of radius $R$ in $\C^{m}$ we shall denote by $P^{\varepsilon}$ the polydisc with same center and radius  $R - \varepsilon$ for $0 < \varepsilon < R$.

  \begin{defn}\label{$n-$Marmite} \rm
  Let  $n \geq 2$ and  $p \geq 1$ two integers and let $\Delta \subset\subset \Delta'$ be two open sets in a reduced complex space $Z$. We shall call $ \mathcal{H} := (\mathcal{M}, M, B, j)$ a   {\bf  $n$-Hartogs figure in $Z$ relative to the boundary of $\Delta$}, the following  data
  \begin{itemize}
  \item  an embedding $j$ of an open set $Z'$ in $\Delta'$ into an open set in $\C^{n+p}$,
  \item  open sets $\mathcal{M} \subset\subset \Delta'$ and $M \subset\subset \Delta$ relatively compact in $Z'$,
  \item  a relatively compact polydisc $B$ in $\C^{p}$
  \end{itemize}
  such that there exists $\alpha \in (\mathbb{R}_{+}^{*})^2$ and a relatively compact polydisc $V$ in $\C^{n-2}$  with the following conditions:
 \begin{enumerate}[\rm i)]
 \item The map $j$ induces a closed embedding of $\mathcal{M}$ in $\mathcal{M}(\alpha)\times V\times B$.
 \item  The map  $j$ induces a closed embedding of $M$ in $M(\alpha)\times V\times B$.
 \item We have  $j^{-1}\big(\bar{ \mathcal{M}}(\alpha)\times \bar V\times \partial B\big) \subset  \Delta$.\\
  \end{enumerate}
 \end{defn}

\begin{defn}\label{adapte}
\rm
  Let  $n \geq 2$ and  $p \geq 1$ two integers and let $\Delta \subset\subset \Delta'$ be two open sets in a reduced complex space $Z$. Let $\mathcal{H} = (\mathcal{M}, M, B, j)$ be a $n$-Hartogs figure in $Z$ relative to the boundary of $\Delta$ and let  $X_{0}$ be a $n$-cycle in $\Delta$. We shall say that  {\bf $\mathcal{H}$ is adapted to  $X_{0}$} when the following condition is satisfied :
  \begin{equation*}
  j^{-1}(\bar{\mathcal{M}}(\alpha)\times \bar V\times \partial B) \cap \vert X_{0}\vert = \emptyset . \tag{@}
  \end{equation*}
\end{defn}

 \parag{Remarks} \begin{enumerate}
 \item Let $\mathcal{H}$ be a  $n$-Hartogs figure in $Z$ relative to the boundary of $\Delta$. If the open set $\Delta_{1} \subset\subset \Delta'$ has a boundary $\partial \Delta_{1}$ near enough to $\partial \Delta$, then $\mathcal{H}$ is again a $n$-Hartogs figure relative to the boundary of  $\Delta_{1}$. For instance, if  $\Delta := \{ \varphi > 0 \}$ where $\varphi$ is a continuous proper function on $Z$, we may choose $\Delta_{1} := \{ \varphi > \varepsilon\}$ for $\varepsilon > 0$ small enough.
 \item Note that the $n$-scale $E_{\mathcal{H}} := (M(\alpha)\times V, B, j)$ on $\Delta$ associated to $\mathcal{H}$ is adapted to  $X_{0} $ as soon as the $n$-Hartogs figure  $\mathcal{H}$ is adapted to $X_{0}$.
\item If $\tilde{X}_{0}$ is a $n$-cycle in $\Delta'$ such that its restriction to $\Delta$ is equal to $X_{0}$, the $n$-scale $E_{\tilde{\mathcal{H}}} := (\mathcal{M}(\alpha)\times V, B, j)$ on $\Delta'$ is adapted to  $\tilde{X}_{0}$ if and only if the $n$-Hartogs figure $\mathcal{H}$ is adapted to $X_{0}$.
  \end{enumerate}

\medskip

  Note that the $n$-scale  $E_{\tilde{\mathcal{H}}}$ is not a $n$-scale on $\Delta$ although the subset $\bar{\mathcal{M}}(\alpha)\times \bar V\times \partial B$  is contained in $\Delta$.

 \begin{defn}\label{homotheties} \rm
 In the situation above we define, for $\varepsilon >  0$ small enough, the $n$-Hartogs figure $\mathcal{H}^{\varepsilon}$ on $\Delta$ as follows :
  $$ \mathcal{H}^{\varepsilon} := (\mathcal{M}^{\varepsilon}, M^{\varepsilon}, B, j)$$
  where we use the notations
   $$\mathcal{M}^{\varepsilon} := j^{-1}(\mathcal{M}(\alpha)^{\varepsilon}\times V^{\varepsilon}\times B) \quad {\rm and~also} \quad M^{\varepsilon}:= j^{-1}(M(\alpha)^{\varepsilon}\times V^{\varepsilon}\times B).$$
 \end{defn}

 It is obvious to see that when $\mathcal{H}$ is a $n$-Hartogs figure relative to the boundary of $\Delta$, then  $\mathcal{H}^{\varepsilon}$ is again a $n$-Hartogs figure relative to the boundary of $\Delta$ for all $\varepsilon > 0$ small enough.

 Moreover, if $\mathcal{H}$  is adapted to the $n$-cycle $X_{0}$ of $\Delta$,  the same is true for $\mathcal{H}^{\varepsilon}$ for all $\varepsilon > 0$ small enough.

  \begin{lemma}\label{banach}
  Let $V$ be a relatively compact open polydisc in $\C^{q}$. The restriction map
   $$ res : \B(\mathcal{M}(\alpha)\times V, \C) \to \B(M(\alpha)\times V, \C)$$
   is a linear isometry of Banach spaces.
    \end{lemma}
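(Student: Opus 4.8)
The plan is to prove that $res$ is a norm-preserving linear bijection. The decisive observation is that everything takes place in the single variable $t_2$, with $(t_1,v)\in\{|t_1|<\alpha_1\}\times V$ acting merely as holomorphic parameters, so that the polydisc $V$ contributes nothing essential. That $res$ is well defined, linear and satisfies $\|res(f)\|\le\|f\|$ is immediate from $M(\alpha)\times V\subset\mathcal{M}(\alpha)\times V$. For the reverse inequality I would fix $(t_1,v)$ and apply the maximum modulus principle to $h(s):=f(t_1,s,v)$, which is holomorphic on $\{|s|<\alpha_2\}$: for every radius $\rho\in(\alpha_2/2,\alpha_2)$ the circle $\{|s|=\rho\}$ lies in the collar $M^C(\alpha)$, so $|h|\le\|res(f)\|$ there, whence $|h(s)|\le\|res(f)\|$ on the whole disc $\{|s|\le\rho\}$; letting $\rho\to\alpha_2$ gives $|f|\le\|res(f)\|$ throughout $\mathcal{M}(\alpha)\times V$. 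Thus $\|f\|=\|res(f)\|$, so $res$ is an isometry.

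It remains to see that $res$ is bijective, i.e. that it genuinely identifies the two Banach spaces. Injectivity is the identity theorem, since $\mathcal{M}(\alpha)\times V$ is connected and $M(\alpha)\times V$ has nonempty interior. For surjectivity --- which is precisely the Hartogs extension underlying this whole section --- I would extend a given $g\in\B(M(\alpha)\times V,\C)$ by the Cauchy integral
$$ \tilde g(t_1,t_2,v):=\frac{1}{2\pi i}\int_{|\zeta|=r}\frac{g(t_1,\zeta,v)}{\zeta-t_2}\,d\zeta,\qquad \alpha_2/2<r<\alpha_2,\ |t_2|<r. $$
For each admissible $r$ the circle sits in the annulus of $M^C(\alpha)$, where $g(t_1,\cdot,v)$ is holomorphic, so this is a holomorphic function of the parameters $(t_1,t_2,v)$; Cauchy's theorem makes it independent of $r$, and these pieces glue to a holomorphic $\tilde g$ on all of $\mathcal{M}(\alpha)\times V$. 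On the pole $M^P(\alpha)\times V$, where $g(t_1,\cdot,v)$ is holomorphic on the full disc $\{|t_2|<\alpha_2\}$, Cauchy's formula gives $\tilde g=g$; propagating this identity in the variable $t_1$ by the identity theorem extends it to the collar, so $res(\tilde g)=g$.

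The only genuine subtlety is the reverse norm inequality. The naive Cauchy estimate applied to $\tilde g$ degenerates as $|t_2|\to r$ and gives no uniform bound, so it cannot yield the isometry; the point is instead to invoke the maximum modulus principle on the $t_2$-slices, which transfers the supremum over the collar $M^C(\alpha)$ to the entire bidisc $\mathcal{M}(\alpha)\times V$. Once this is seen, the construction of the extension and the verification that $\tilde g=g$ on $M(\alpha)\times V$ are routine one-variable complex analysis with holomorphic dependence on the parameters $(t_1,v)$.
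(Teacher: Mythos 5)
Your proof is correct and follows essentially the same route as the paper: your Cauchy-transform extension $\tilde g$ is exactly the regular part of the paper's Laurent expansion in $t_2$, with your identity-theorem propagation in $t_1$ playing the role of the paper's vanishing of the negative Laurent coefficients, and your slicewise maximum-modulus argument for the norm equality is a clean variant of the paper's distinguished-boundary argument on the shrunk polydiscs $\bar{\mathcal{M}}(\alpha)^{\varepsilon}\times \bar V^{\varepsilon}$. No gaps.
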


    Note that the restriction map
     $$ res : \B(\mathcal{M}(\alpha)^{\varepsilon}\times V^{\varepsilon}, \C) \to \B(M(\alpha)^{\varepsilon}\times V^{\varepsilon}, \C)$$
   induces also an isometry  for all $\varepsilon > 0 $ small enough.

\begin{proof} 
Let $f(v, t_{1}, t_{2}) := \sum_{m \in \mathcal{Z}}  a_{m}(v, t_{1}).t_{2}^{m}$ the Laurent expansion of the holomorphic function $f : M^{C}(\alpha)\times V \to \C$. The holomorphic functions $a_{m}$, $m \in \mathbb{Z}$ on the product of $V$ by the disc $\{ \vert t_{1}\vert < \alpha_{1} \}$ are given by the formula
      $$ a_{m}(v, t_{1}) := \frac{1}{2i\pi}.\int_{\vert z \vert = r} f(v, t_{1},z).\frac{dz}{z^{m+1}}  \quad {\rm with} \quad r \in ]\alpha_{2}/2, \alpha_{2}[.$$
      As the holomorphy of $f$ on $M^{P}(\alpha)\times V$ implies that $a_{m} \equiv 0$ for each negative $m$ on the open set $\{ \vert t_1 - \alpha_1/2\vert < \alpha_1/4\}\times V$, we conclude that the functions $a_{m} $ are identically zero for $m < 0$ and so $f$ is holomorphic on  $\mathcal{M}(\alpha)\times V$. This shows that the restriction map \ $res$ \ is bijective (and also it is linear continuous) between the  two Fr\'echet spaces $\mathcal{O}(\mathcal{M}(\alpha)\times V)$ and $\mathcal{O}(M(\alpha)\times V)$; so it is an isomorphism of Fr\'echet spaces.

      Let us show that if $f$ is in $\B(\mathcal{M}(\alpha)\times V, \C)$, then $res(f)$, which belongs to the space  $\B(M(\alpha)\times V, \C)$, has the same  ``sup'' norm.  For this purpose fix $\varepsilon > 0$ small enough. As $\bar{\mathcal{M}}(\alpha)^{\varepsilon}\times \bar V^{\varepsilon}$ is a compact polydisc in $\mathcal{M}(\alpha)\times V$, the maximum of $f$ on this compact is obtained at some point $z$ in the distinguish boundary of it. But as $z$ is also in the boundary of $M^{C}(\alpha)^{\varepsilon}\times \bar V^{\varepsilon}$, the desired equality follows.\\
      Conversely, if $g$ is in $\B(M(\alpha)\times V, \C)$, its analytic extension $f$ to $\mathcal{M}(\alpha)\times V$ will be bounded on the boundary of  $\mathcal{M}(\alpha)^{\varepsilon}\times V^{\varepsilon}$ by the sup of $g$ on $M^{C}(\alpha)^{\varepsilon}\times V^{\varepsilon}$.  So we obtain the equality of the ``sup'' norms for $g$ and $f$ respectively on $M(\alpha)\times V$ and $\mathcal{M}(\alpha)\times V$.
\hfill $\Box$
\end{proof}

      \parag{The Banach analytic set  $\mathcal{B}(U, \Sym^{k}(\C^{p}))$} Recall that if $p \geq 1$ and $k \geq 1$  are   integers, there exists a closed embedding (in fact given by a polynomial map)  of  $\Sym^{k}(\C^{p}) := (\C^{p})^{k}\big/\mathfrak{S}_{k}$ into the vector space $E(k) := \bigoplus_{h=1}^{k} S^{h}(\C^{p})$ given by the elementary tensorial symetric functions\footnote{See for instance \cite[Chapter I, \S 4]{BM1}.}. If $U$ is an open relatively compact polydisc in $\C^{n}$, the subset  $\mathcal{B}(U, \Sym^{k}(\C^{p}))$ is closed and Banach analytic in the Banach space $\mathcal{B}(U, E(k))$. Indeed, if $Q : E(k) \to \C^{N}$ is a polynomial map such that $Q^{-1}(0) = \Sym^{k}(\C^{p})$, then the holomorphic map
       $$ \mathcal{Q} : \mathcal{B}(U, E(k)) \to \mathcal{B}(U, \C^{N}) \quad {\rm defined~by} \quad  f \mapsto Q\circ f $$
       satisfies $ \mathcal{Q}^{-1}(0) = \mathcal{B}(U, \Sym^{k}(\C^{p}))$.

       Nevertheless, be aware that for an open set $\Omega$ in  $E(k)$  the subset $\mathcal{B}(U, \Omega)$ of elements in $\mathcal{B}(U, E(k))$ taking their values in $\Omega$ is  not, in general,  open in $\mathcal{B}(U, E(k))$; so, for an open polydisc $B \subset\subset \C^{p}$,  the subset  $ \mathcal{B}(U, \Sym^{k}(B))$ is not open in $\mathcal{B}(U, \Sym^{k}(\C^{p}))$ in general.

\parag{Remark} The obvious map  $H(\bar U, E(k)) \to \mathcal{B}(U, E(k))$ is a closed (linear) isometry and induces a holomorphic inclusion map
 $$ i_{U} : H(\bar U, \Sym^{k}(\C^{p})) \hookrightarrow \mathcal{B}(U, \Sym^{k}(\C^{p})) $$
 and for all $\varepsilon > 0$ the restriction
  $$ r :   \mathcal{B}(U, E(k)) \to H(\bar U^{\varepsilon}, E(k)) $$
  is a (linear and  continuous) compact map which induces a holomorphic restriction map
    $$ \mathcal{B}(U, \Sym^{k}(\C^{p})) \to H(\bar U^{\varepsilon}, \Sym^{k}(\C^{p})).$$
    This remark will allow us to use Lemma \ref{banach} with Banach analytic sets like $H(\bar U, \Sym^{k}(B))$.

  \parag{Notations} Let  $k \in \mathbb{N}$ and let  $U' \subset \subset U \subset \subset \C^{n}$ and $B \subset\subset \C^{p}$ be polydiscs. We shall note $\Sigma_{U, U'}(k)$ the Banach analytic set classifying the couples of an element in $H(\bar U, \Sym^{k}(B))$ with its isotropy data on $\bar U'$. Recall that for a holomorphic map $f : U \to \Sym^{k}(B)$ the isotropy data on the polydisc $U'$ is the map
  $$ T(f) : U' \to F \otimes E'$$
  where
    $$F := \bigoplus_{i}\Big( L(\Lambda^{i}(\C^{n}), \Lambda^{i}(\C^{p}))\Big) \quad {\rm and} \quad E' := \bigoplus_{m=0}^{k-1} S^{m}(\C^{p}).$$

   It corresponds to  the collection of holomorphic maps
    $$T^{i}_{m}(f) : U' \to L(\Lambda^{i}(\C^{n}), \Lambda^{i}(\C^{p}))\otimes S^{m}(\C^{p})$$
     for all $i \in [1,\mathrm{min}(n, p)]$ and $m \in [0, k-1]$ which are given, near a point in $U'$ where the multiform graph $X_{f}$ associated to $f$ has local branches $f_{1}, \dots, f_{k}$, by the formula
  $$ T^{i}_{m}(f)(t) := \sum_{j=1}^{k}  \Lambda^{i}(Df_{j}(t))\otimes f_{j}(t)^{m} .$$
  These maps are always holomorphic on all $U$ and determine the trace map for the projection $\pi : X_{f} \to U$ of the holomorphic differential forms on $U \times B$.

  The  subset
  $$ \Sigma_{U, U'}(k) \subset  H(\bar U, \Sym^{k}(B)) \times H(\bar U', F\otimes E')$$
   is defined as the graph of the map $f \mapsto T(f) :=  \bigoplus_{i, m} T^{i}_{m}(f)$ which is not holomorphic in general. Nevertheless it  is a Banach analytic subset and its natural  projection
     $$ \Sigma_{U, U'}(k) \to H(\bar U, \Sym^{k}(B))$$
      is a holomorphic homeomorphism (see \cite[Chapter III, Proposition 2, p.\,81]{B75} or \cite[Chapter~V]{BM2}).

      The important point which motivates the introduction of this Banach analytic subset is the fact that, when $S$ is a reduced complex space, an analytic family of multiform graphs given by a holomorphic map
      $f : S\times U \to \Sym^{k}(B)$ will give an analytic family of cycles in $U\times B$ parametrized by $S$ if and only if the corresponding isotropy data (given by the maps $T^{i}_{m}(f_{/S})$ on $S\times U$) are holomorphic on $S\times U$. This is the isotropy condition ; see \cite[Chapter II]{B75} or  \cite[Chapter IV, Section 5]{BM1}.

\medskip

  Our next result is the main tool for performing the analytic extension of $n$-cycles near a $(n-2)$-concave boundary.

   \begin{prop}\label{prlgt isotrope}
Consider the open sets $\mathcal{M}(\alpha)\times V$ and $ M(\alpha)\times V$ in $\C^{n}$. The inverse of the restriction map is a holomorphic isomorphism of analytic extension
 $$  prlgt : \mathcal{B}(M(\alpha)\times V, \Sym^{k}(\C^{p})) \to  \mathcal{B}(\mathcal{M}(\alpha)\times V, \Sym^{k}(\C^{p})). $$
 Composed with the restriction to the compact set $\bar{\mathcal{M}}(\alpha)^{\varepsilon} \times \bar V^{\varepsilon} $ it sends the subset  $H(\bar M(\alpha)\times \bar V, \Sym^{k}(B))$ into  $H(\bar{\mathcal{M}}(\alpha)^{\varepsilon}\times \bar V^{\varepsilon},  \Sym^{k}(B))$ for $\varepsilon > 0$ small enough.

 Moreover, this holomorphic map induces a holomorphic map, again for $\varepsilon > 0$ small enough
  $$ \Sigma_{M(\alpha)\times V, \ M(\alpha)^{\varepsilon/3}\times V^{\varepsilon/3}}(k) \longrightarrow \Sigma_{\mathcal{M}(\alpha)^{\varepsilon/3}\times V^{\varepsilon/3}, \
 \mathcal{M}(\alpha)^{2\varepsilon/3}\times V^{2\varepsilon/3}}(k) $$
which factorizes the restriction map
 $$\Sigma_{\mathcal{M}(\alpha)\times V,\ \mathcal{M}(\alpha)^{\varepsilon/3}\times V^{\varepsilon/3}}(k) \to \Sigma_{\mathcal{M}(\alpha)^{\varepsilon/3}\times V^{\varepsilon/3},\
  \mathcal{M}(\alpha)^{2\varepsilon/3}\times V^{2\varepsilon/3}}(k)$$
  through the restriction
   $$\Sigma_{\mathcal{M}(\alpha)\times V,\  \mathcal{M}(\alpha)^{\varepsilon/3}\times V^{\varepsilon/3}}(k) \to \Sigma_{M(\alpha)\times V,\  M(\alpha)^{\varepsilon/3}\times V^{\varepsilon/3}}(k).$$
\end{prop}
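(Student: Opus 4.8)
The plan is to establish the three assertions successively, deducing the first from Lemma~\ref{banach} together with the identity principle, the second from the Hartogs geometry, and the third — the real difficulty — from a \emph{second} application of Lemma~\ref{banach}, this time to the isotropy data.

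First, by Lemma~\ref{banach} applied componentwise to $E(k)$-valued maps, the restriction $\mathcal{B}(\mathcal{M}(\alpha)\times V, E(k)) \to \mathcal{B}(M(\alpha)\times V, E(k))$ is an isometric isomorphism; let $prlgt$ denote its inverse. To see that $prlgt$ carries $\mathcal{B}(M(\alpha)\times V, \Sym^{k}(\C^{p}))$ into $\mathcal{B}(\mathcal{M}(\alpha)\times V, \Sym^{k}(\C^{p}))$, let $Q : E(k) \to \C^{N}$ be the polynomial with $Q^{-1}(0) = \Sym^{k}(\C^{p})$. If $g$ takes values in $\Sym^{k}(\C^{p})$ then $Q\circ g \equiv 0$, and since $Q\circ prlgt(g)$ is holomorphic on $\mathcal{M}(\alpha)\times V$ and restricts to $0$ on $M(\alpha)\times V$, the injectivity in Lemma~\ref{banach} forces $Q\circ prlgt(g)\equiv 0$, i.e. $prlgt(g)$ has values in $\Sym^{k}(\C^{p})$. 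As $prlgt$ and its inverse are restrictions of continuous linear isomorphisms of Banach spaces, $prlgt$ is a biholomorphism between the two Banach analytic sets.

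Next, writing $B = \{\,\vert w_{j}\vert < \rho_{j},\ 1\le j\le p\,\}$, I take $g \in H(\bar M(\alpha)\times \bar V, \Sym^{k}(B))$ and set $f := prlgt(g)$. Fixing a coordinate index $j$ and a point $(t_{1}, t_{2}, v)$ of the compact $\bar{\mathcal{M}}(\alpha)^{\varepsilon}\times \bar V^{\varepsilon}$, the $j$-th coordinates of the $k$ sheets of $f$ are the branches of a multivalued function holomorphic in $t_{2}$, so $\mu_{j} := \max_{i}\vert (f_{i})_{j}\vert$ is subharmonic in $t_{2}$; the one-variable maximum principle gives
$$ \mu_{j}(t_{1}, t_{2}, v) \leq \max_{\vert t_{2}'\vert = \alpha_{2}-\varepsilon} \mu_{j}(t_{1}, t_{2}', v). $$
Every point $(t_{1}, t_{2}', v)$ with $\vert t_{2}'\vert = \alpha_{2}-\varepsilon$ lies in $M^{C}(\alpha)\times V$ — indeed $\vert t_{1}\vert \le \alpha_{1}-\varepsilon < \alpha_{1}$, $\alpha_{2}/2 < \vert t_{2}'\vert < \alpha_{2}$ and $v \in \bar V^{\varepsilon}\subset V$ — where $f = g$ takes its values in $B$; hence the right-hand side is $< \rho_{j}$. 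Thus $f$ has values in $\Sym^{k}(B)$ on $\bar{\mathcal{M}}(\alpha)^{\varepsilon}\times \bar V^{\varepsilon}$, which is the second assertion. This is exactly the Hartogs mechanism: the circle $\vert t_{2}'\vert = \alpha_{2}-\varepsilon$ forms a ``frame'' lying in $M^{C}(\alpha)$ for every admissible $t_{1}$.

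The third assertion is the delicate one, and the real obstacle: the map $f \mapsto T(f)$ is \emph{not} holomorphic — this is the very reason the sets $\Sigma$ are introduced — so one cannot simply transport the isotropy data through $prlgt$. My remedy is to observe that $T(f)$, viewed as a function of the base variable, is itself an analytic extension of $T(g)$. Starting from $(g, T(g)) \in \Sigma_{M(\alpha)\times V,\ M(\alpha)^{\varepsilon/3}\times V^{\varepsilon/3}}(k)$, the component $T(g)$ is recorded as a coordinate of the source and so depends holomorphically on it. Because $f$ is a holomorphic classifying map, the quoted fact that the maps $T^{i}_{m}$ are holomorphic on the whole base shows $T(f)$ is holomorphic on $\mathcal{M}(\alpha)^{\varepsilon/3}\times V^{\varepsilon/3}$ (with values in $\Sym^{k}(B)$ by the previous step at level $\varepsilon/3$); since $f = g$ on $M^{C}(\alpha)\times V$, the functions $T(f)$ and the analytic extension of $T(g)$ furnished by Lemma~\ref{banach} (now for $F\otimes E'$-valued maps on the homothetic sets) agree there, hence coincide on all of $\mathcal{M}(\alpha)^{\varepsilon/3}\times V^{\varepsilon/3}$ by the identity principle. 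Consequently the sought isotropy datum on $\mathcal{M}(\alpha)^{2\varepsilon/3}\times V^{2\varepsilon/3}$ is the image of $T(g)$ under the composition of the extension isometry with the compact restriction to $\bar{\mathcal{M}}(\alpha)^{2\varepsilon/3}\times \bar V^{2\varepsilon/3}$ — a composition of holomorphic maps. Together with the $f$-component from the first two steps this defines the asserted holomorphic map $P$. Finally, the factorization is formal: if $R_{1}$ denotes the restriction to the $M$-part and $R_{2}$ the restriction shrinking the $\mathcal{M}$-domains, then for a datum already defined on $\mathcal{M}(\alpha)\times V$ the extension $prlgt$ recovers it by uniqueness of analytic continuation, so $R_{2} = P\circ R_{1}$. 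The extra margin from $\varepsilon/3$ to $2\varepsilon/3$ is precisely what lets one pass, via the compact restriction map, from bounded-holomorphic extensions to data continuous up to the boundary.
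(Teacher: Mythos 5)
Your proof is correct and follows essentially the same route as the paper's: Lemma \ref{banach} applied to $E(k)$-valued maps plus the identity principle for the first assertion, and a second application of Lemma \ref{banach}, this time to the $F\otimes E'$-valued isotropy data (together with uniqueness of analytic continuation and the compact restriction across the $\varepsilon/3$--$2\varepsilon/3$ margin), for the ``Moreover'' part. The only difference is one of detail: where the paper writes ``it is clear'' and reduces the last assertion to an asserted extension property of isotropic families parametrized by a Banach analytic set, you supply the missing verifications — the subharmonicity/maximum-principle argument showing the extension keeps its values in $\Sym^{k}(B)$ on the shrunk compact, and the identification $T(prlgt(g)) = prlgt(T(g))$ — which is precisely what the paper's sketch leaves to the reader.
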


\begin{proof} Lemma  \ref{banach} gives that the map
$$ prlgt : \mathcal{B}(M(\alpha)\times V, E(k)) \to  \mathcal{B}(\mathcal{M}(\alpha)\times V, E(k)) $$
is an isometry of Banach spaces. It is clear that its inverse sends $\mathcal{B}(\mathcal{M}(\alpha)\times V, \Sym^{k}(\C^{p})) $ into the Banach analytic subset $\mathcal{B}(M(\alpha)\times V, \Sym^{k}(\C^{p}))$ and that if  the map $f \in \mathcal{B}(\mathcal{M}(\alpha)\times V, \Sym^{k}(\C^{p})) $ takes values in $\Sym^{k}(B)$, the same is true for its restriction. This proves the first part of the proposition.

In order to prove the second part, it is enough to show that a holomorphic map of a Banach analytic set $S$ with values in the subset
  $$H(\bar M(\alpha)\times \bar V, \Sym^{k}(B))$$
  which is isotropic on the product of $S$ with any relatively compact subset in the open set  $M(\alpha)^{\varepsilon/3}\times V^{\varepsilon/3}$ will have  an analytic extension which will be isotropic on any relatively compact open set in  $\mathcal{M}(\alpha)^{\varepsilon/3}\times V^{\varepsilon/3}$. So it will be isotropic on the closure of the open set $\mathcal{M}(\alpha)^{2\varepsilon/3}\times V^{2\varepsilon/3}$.
\hfill $\Box$
\end{proof}

 \begin{prop}\label{Hart.}
Let $n \geq 2$ and $p \geq 1$ be integers and let $U_1 \times B_1$ the product of two polydiscs with centers $0$  respectively in  $\mathbb{C}^{n}$ and $\mathbb{C}^{p}$. Denote by $(t_1, \cdots, t_n, x_1, \cdots, x_p)$ coordinates on $U_1 \times B_1$.  Let $\varphi $ be a real valued function of class $\mathscr{C}^2$ on $U_1 \times B_1$, such that
 \begin{equation*}
  \varphi(t,x) = \R(t_1) + \sum_{i=1}^n \rho_i.\vert t_i \vert^2 + \sum_{j=1}^p \sigma_j.\vert x_j \vert ^2 + o(\vert\vert (t, x)\vert\vert^2) \tag{@@}
  \end{equation*}
  where the real numbers $\rho_2, \sigma_1, \cdots, \sigma_p$ are positive (so $\varphi$ is $(n-2)$-convex near $(0, 0)$ and $d\varphi_{0,0}\not= 0$).

  Let  $\Delta$ be the open set $\{\varphi  > 0\} $ in $U_1 \times B_1$ and let  $\Delta'$ be an open neighbourhood of the compact set $\bar{\Delta}$ in $\C^{n+p}$. Let $X_0$  be a closed analytic subset of pure dimension $n$ in $\Delta'$ such that each irreducible component of $X_{0}$ meets $\Delta$ and such that
   \begin{equation*}
 \vert X_0\vert  \cap \, \{t_1= \cdots = t_n = 0 \} \subset \{ 0 \} .\tag{*}
  \end{equation*}
  Then there exists $\alpha \in (\mathbb{R}_{+}^{*})^2$  and polydiscs $V$ and  $B \subset\subset B_{1}$ with centers $0$ respectively in $\mathbb{C}^{n-2}$ and $\mathbb{C}^p$ such that the following conditions are satisfied :
\begin{enumerate}
 \item[\rm 1.] $\mathcal{M}(\alpha) \times V \times B \subset \subset \Delta' $;
 \item[\rm 2.] $ M(\alpha) \times V  \times B  \subset\subset \Delta $;
 \item[\rm 3.] $\overline{ \mathcal{M}(\alpha)}\times \bar{V} \times \partial B  \subset \Delta $;
 \item[\rm 4.] $ \vert X_0 \vert \cap \, (\overline{\mathcal{M}(\alpha)} \times \bar{V}\times  \partial B ) = \emptyset $ (this implies $ \vert X_0 \vert \cap \, (\overline{M(\alpha)} \times \bar{V}\times  \partial B ) = \emptyset ).$
\end{enumerate}
 \end{prop}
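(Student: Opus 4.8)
The plan is to exploit that near the boundary point $(0,0)$, where $\varphi(0,0)=0$ and $d\varphi_{0,0}\neq 0$, the function $\varphi$ is dominated by its linear part $\R(t_1)$, while the transverse curvature in the guaranteed positive directions $t_2$ and $x_1,\dots,x_p$ (those whose coefficients $\rho_2,\sigma_1,\dots,\sigma_p$ are positive) is exactly what is needed to build the concave ``notch'' $M^C(\alpha)$ and the lateral wall $\partial B$ of the figure. I would fix the radii in the order $B$, then $\alpha_2$, then $\alpha_1$ (with $\alpha_1\ll\alpha_2^2$), then $V$, and verify the four conditions: condition~4 is settled by a compactness argument using $(*)$, while conditions~1--3 reduce to elementary quadratic estimates in which the nonlinear remainder $o(\norm{(t,x)}^2)$ is absorbed into the nonnegative terms $\rho_2\vert t_2\vert^2$ and $\sum_j\sigma_j\vert x_j\vert^2$, which is legitimate once all radii are small. (Note that the hypothesis that each irreducible component of $X_0$ meets $\Delta$ plays no role in this particular statement; only $(*)$ is used, conditions~1--3 involving $\varphi$ alone.)

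First I would establish condition~4. Choosing the polydisc $B\subset\subset B_1$ with radii small enough, the positivity of the $\sigma_j$ gives $\varphi(0,x)>0$ for every $x\in\bar B\setminus\{0\}$, whence $\{0\}\times\bar B\subseteq\bar\Delta\subseteq\Delta'$. By the hypothesis $(*)$ one has $\vert X_0\vert\cap(\{0\}\times\bar B)\subseteq\{(0,0)\}$, and since $0\notin\partial B$ this yields $\vert X_0\vert\cap(\{0\}\times\partial B)=\emptyset$. The compact set $\{0\}\times\partial B$ therefore lies in the open set $\Delta'\setminus\vert X_0\vert$, so the tube lemma (applied with the compact factor $\partial B$) produces an open polydisc $T\ni 0$ in $\C^n$ with $T\times\partial B\subseteq\Delta'\setminus\vert X_0\vert$; shrinking $T$ I may also assume $T\times\bar B\subseteq\Delta'$. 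It then suffices to require $\overline{\mathcal{M}(\alpha)}\times\bar V\subseteq T$, which holds for $\alpha$ and $V$ small, to obtain simultaneously condition~4 (its parenthetical part being immediate from $\overline{M(\alpha)}\subseteq\overline{\mathcal{M}(\alpha)}$) and condition~1.

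It remains to pin down $\alpha_1,\alpha_2$ and $V$ so that conditions~2 and~3 hold. Writing $\varphi=\ell+o(\norm{(t,x)}^2)$ with $\ell$ the explicit quadratic model of $(@@)$, and working at a scale small enough that the remainder is dominated by the positive curvature terms $\rho_2\vert t_2\vert^2$ and $\sum_j\sigma_j\vert x_j\vert^2$, I would treat $M^P(\alpha)$ and $M^C(\alpha)$ separately. On $\overline{M^P(\alpha)}\times\bar V\times\bar B$ one has $\R(t_1)\geq\alpha_1/4$, and after absorbing the remainder the only genuinely negative contributions are $O(\alpha_1^2)$ and a term controlled by the radius of $V$; hence $\varphi>0$ as soon as $\alpha_1$ is small and $V$ is small compared with $\sqrt{\alpha_1}$. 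On $\overline{M^C(\alpha)}\times\bar V\times\bar B$ the linear term is only bounded below by $-\alpha_1$, but the constraint $\vert t_2\vert\geq\alpha_2/2$ gives $\rho_2\vert t_2\vert^2\geq\rho_2\alpha_2^2/4$ up to the remainder, so $\varphi\geq\rho_2\alpha_2^2/8-\alpha_1-O(\alpha_1^2)-\cdots$, which is positive precisely when $\alpha_1<\rho_2\alpha_2^2/8$. This concave balance $\alpha_1\ll\alpha_2^2$ is the real content of the statement. Condition~3 is then automatic: on $\overline{\mathcal{M}(\alpha)}\times\bar V\times\partial B$ at least one $\vert x_j\vert$ equals its radius, so $\sum_j\sigma_j\vert x_j\vert^2$ is bounded below by a fixed positive constant depending only on $B$, dwarfing the vanishing negative terms $\alpha_1+O(\alpha_1^2)+\cdots$. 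The single genuine obstacle is therefore not any one estimate but the mutual compatibility of these inequalities, centered on the relation $\alpha_1<\rho_2\alpha_2^2/8$ forced by the notch $M^C(\alpha)$; with the radii chosen in the order $B,\alpha_2,\alpha_1,V$ indicated above, all four conditions hold simultaneously.
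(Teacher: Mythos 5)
Your proof is correct and follows essentially the same route as the paper: fix $B$ first using $(*)$ and the positivity of $\varphi(0,x)$ for small $x\neq 0$, deduce Conditions 1, 3 and 4 by compactness once $\alpha$ and $V$ are small, and obtain Condition 2 from the quadratic estimates on $M^P(\alpha)$ and $M^C(\alpha)$ with precisely the balance the paper imposes ($8\alpha_1<\rho'_2\alpha_2^2$, $\alpha_1$ small, and the radius of $V$ of order at most $\sqrt{\alpha_1}$), the absorption of the remainder corresponding to the paper's inequality $(**)$ with the adjusted coefficients $\rho'_1,\rho'_2,r$. Your side remark that only $(*)$ is used (not the hypothesis on irreducible components meeting $\Delta$) also matches the paper's own observation after the proof.
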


\begin{proof} Choose the polydisc $B \subset\subset B_{1}$  small enough in order that we have
 $$X_0 \cap (\{0\}\times \bar{B}) \subset  \{0\} \quad  {\rm and} \quad\{0\} \times \partial B \subset \Delta .$$
 This is possible as we have  $\vert X_0\vert \cap \, \{t_1= \cdots = t_n = 0 \} \subset \{0\} $  and as   $\varphi $  is positive on a small enough  punctured neighbourhood of the origin in the $p$-plane  $\{t_1= \cdots = t_n = 0 \} \times \mathbb{C}^p$. So we shall have
 $$\vert X_0\vert \cap (\bar{W} \times \partial B) = \emptyset \quad {\rm and} \quad \bar{W}\times \partial B \subset \Delta$$
 for any small enough  open neighbourhood $W$ of the origin in  $\mathbb{C}^n$.  A immediate consequence is that Conditions 1, 3 and 4 will be satisfied as soon as  $\alpha$ and $V$ are small enough.

In order to check Condition 2, let us remark first that, up to choosing the real numbers  $\rho'_1$ and $\rho'_2$ such that ${\rho'}_1 > \vert\rho_1\vert, \ {\rho'}_2\in ]0,\rho_2[$ and  $r  >  \sup_{i\geq 3} \vert \rho_i \vert $, we obtain on $W \times B$  chosen small enough
 \begin{equation*}
  \varphi(t,x) \geq \R(t_1) - \rho'_1.\vert t_1\vert^2 + \rho'_2.\vert t_2 \vert^2 - r.(\sum_3^n \vert t_i \vert^2) \tag{**}
  \end{equation*}
  with strict inequality as soon as $x \not= 0$. Then for
   $$V_{\varepsilon} = \{(t_3, \cdots, t_n) \ /  \ \sum_3^n \vert t_i \vert^2 < \varepsilon^2 \}$$
   the following inequalities hold
    \begin{align*}
 \varphi(t,x) \geq & \frac{1}{4}\alpha_1 - {\rho'}_1.{\alpha_1}^2  - r.\varepsilon^2 \quad {\rm on} \quad \overline{M^P(\alpha)\times V_{\varepsilon}  \times B} \tag{1} \\
  \varphi(t,x) \geq & - \alpha_1 - {\rho'}_1.{\alpha_1}^2 +  \frac{1}{4}{\rho'}_2.{\alpha_2}^2 - r.\varepsilon^2 \quad {\rm on} \quad \overline{M^C(\alpha)\times V_{\varepsilon}  \times B} \tag{2}
  \end{align*}
  for $\alpha$ and $\varepsilon$ small enough in order that $\mathcal{M}(\alpha)\times V_{\varepsilon} $ is contained in $W$.

This allows to fix  $\alpha_{1}, \alpha_{2}$ and $\varepsilon$.

Now we shall choose $\alpha_{1}$ and  $\varepsilon$ smaller in order to satisfy the following conditions :
 \begin{equation*}
  8 \alpha_1 < {\rho'}_2.{\alpha_2}^2 \ , \quad  \alpha_1 < \frac{1}{8{\rho'}_1} \quad {\rm and} \quad \varepsilon^2 < \frac{1}{8r}\alpha_1. \tag{3}
  \end{equation*}
  To obtain $M^P(\alpha) \times V_{\varepsilon} \times B \subset \subset \Delta $ it is enough to show that on $M^P(\alpha)\times V_{\varepsilon} \times B$ we have, if we let $\alpha_1 = u.\alpha_2 $ and  $ \varepsilon^2 = v.\alpha_1 = uv.\alpha_2$
    $$ \frac{1}{4} > {\rho'}_1.\alpha_1 + r.v .$$
    Indeed, as we assumed $  {\rho'}_1.\alpha_1 < \frac{1}{8}$ and $r.\varepsilon^2 < \frac{1}{8}.\alpha_1$ (so $ r.v  <  1/8$) the first condition holds.

    In order to satisfy $M^C(\alpha) \times V_{\varepsilon} \times B \subset \subset \Delta $ it is enough to show that on   $ M^C(\alpha)\times V_{\varepsilon} \times B$  we have
     $$ \frac{1}{4}{\rho'}_2.\alpha_2 >  u + {\rho'}_1u^2\alpha_2 + r.uv .$$
     But our condition implies
      \begin{equation*}
  {\rho'}_1u^2\alpha_2 < \frac{1}{8}.u \quad \quad r.uv <  \frac{1}{8}.u
  \end{equation*}
which gives $u + {\rho'}_1u^2.\alpha_2 + r.uv <  2.u $. The condition $8 \alpha_1 <  {\rho'}_2.{\alpha_2}^2 $ which implies $2u < \frac{1}{4}\rho'_{2}.\alpha_{2}$, allows to conclude.
\hfill $\Box$
\end{proof}

\parag{Remarks} \begin{enumerate}
\item We only used Condition  $(^*)$ for  $X_0$  and Inequality $(^{**})$ for $\varphi$  in a neighbourhood of the origin in the proof above.
\item Sufficient conditions on $\varphi \in \mathscr{C}^2$  to satisfy ($@@$)  are:
\begin{enumerate}[i)]
\item The origin is not a critical point of $\varphi$.
\item The Levi form of $\varphi$  at $0$ has, at most, $(n-2)$  non positive eigenvalues in the complex tangent hyperplane to the real hypersurface $\{ \varphi(z) = 0\}$; the existence of  real function  $\varphi \in \mathscr{C}^{2} $ such that $\Delta = \{ \varphi > 0 \}$ and satisfying these two conditions is equivalent to the fact that the open set $\Delta$ has a strongly $(n-2)$-concave  smooth boundary near the origin (see Definition \ref{q-concave} given below). Indeed, if  $\varphi$ is not critical at $0$ and has a Levi form at $0$ with, at most, $(n-2)$ non positive  eigenvalues in the complex hyperplane tangent to the real hypersurface $\{ \varphi(z) = 0\}$, its order $2$ Taylor expansion at the origin is written, in suitable local holomorphic  coordinates $(\tau, x)$
 $$ \varphi(\tau,x) = \R(\tau_1) + \R(Q(\tau, x)) + \sum_{i=1}^n \rho_i.\vert \tau_i \vert^2 + \sum_{j=1}^p \sigma_j.\vert x_j \vert ^2 + o(\vert\vert (\tau, x)\vert\vert^2) $$
 where $Q$ is a holomorphic homogeneous degree $2$ polynomial and where the real numbers $\rho_{2}$ and $\sigma_{j}, j \in [1,p]$ are positive. Define new local holomorphic coordinates
 $$ t_{1} := \tau_{1} + Q(\tau, x), t_{i} := \tau_{i} \quad {\rm for} \ i\in [2,n] \quad {\rm and} \quad x_{j} := x_{j} \quad {\rm for} \ j \in [1,p].$$
 Then we obtain $(@@)$.
 \end{enumerate}
 \item Condition  $(^{*})$ implies that $X_0$ has no local irreducible component at $0$ contained in the hyperplane $\{t_1 = 0\}$. In fact, as the coordinate $t_{1}$ is choosen in order to suppress the real part of the holomorphic homogeneous degree $2$ term  in the  order $2$  Taylor expansion of $\varphi$ at $0$ (see the previous remark), we want that no local irreducible component of $X_{0}$ at the origin  is contained in the complex hypersurface $\tau_{1} + Q(\tau, x) = 0$ locally defined near $0$ for $\varphi$ given. Then, as soon as  the restriction  $\varphi_{\vert X_{0}}$ has not $0$ as a critical point, Condition $(^{*})$ could be realized when the Levi form of $\varphi$ has at most $(n-2)$ non positive eigenvalues on the complex tangent hyperplane at the origin of  the hypersurface $\{ \varphi = 0\}$ .
 \item One may easily see that under our hypothesis, the cycle $X_{0}$ meets the open set $\Delta$ when it contains $0$. Indeed, the analytic subset
  $$ \{t_1 = t_3 = \cdots = t_n = 0 \} \cap \vert X_0 \vert $$
   is nonempty,  has dimension at least $1$ and  meets the complement of $\Delta$ only at the origin.

 Of course, assuming that $0 \in X_{0}$, the proposition shows that, in fact,  $X_0$ contains a branched covering of degree $k \geq 1 $ of $M^P(\alpha)\times V_{\varepsilon}  $ inside
  $$M^P(\alpha)\times V_{\varepsilon}  \times B \subset\subset  \Delta.$$
 \item In the situation of Proposition  \ref{Hart.}, for any continuous family  $(X_s)_{s \in S} $  of $n$-cycles in $\Delta$ parametrized by a Banach analytic set $S$ such that $X_{s_0} = X_0 \cap \Delta$, there exists an open neighbourhood $S'$ of $s_0$ in $S$, such that for each $s \in S'$ Condition 4 remains true after analytic extension of the cycles (see Proposition \ref{prlgt isotrope}), because, thanks to Condition 3, $\overline{\mathcal{M}(\alpha)}\times \bar{V} \times \partial B$ is a compact subset in   $\Delta$.
 \end{enumerate}

 \subsection{$q$-concave open sets.}

 \begin{defn}\label{fortement convexe} 
\rm
Let  $\varphi : U \to \mathbb{R}$  be real valued $\mathscr{C}^{2}$ function on an open set $U$ in $\C^{N}$. We shall say that $\varphi$  is {\bf strongly  $q$-convex} when its Levi form at each point of $U$ has at most $q$ non positive eigenvalues.
 \end{defn}

So, with this definition a strongly $0$-convex function is a strongly plurisubharmonic function.

\begin{defn}\label{ambiant}
Let  $\varphi : Z \to \mathbb{R}$  a real valued $\mathscr{C}^{2}$  function on a reduced complex space $Z$. We shall say that  $\varphi$ is {\bf strongly $q$-convex} if locally near each point of $Z$ it can be induced by a $\mathscr{C}^{2}$ strongly $q$-convex  function in a local embedding in an open set of an affine space.
\end{defn}

  Remark that a strongly $q$-convex function on an irreducible complex space of dimension at least equal to $q+1$ has no local maximum because there exists at any point a germ of curve on which the restriction of  $\varphi$ is strongly p.s.h.

\begin{defn}\label{q-concave} \rm
Let $Z$ be a reduced complex space and let $\Delta$ be a relatively compact open set in $Z$. We shall say that $\Delta$ has a {\bf smooth $\mathscr{C}^2$ boundary} when for each point $z$  in  $\partial \Delta$ there exists a local holomorphic embedding $j : W \to U$  of an open neighbourhood $W$ of $z$ in an open set $U$ of the Zariski tangent space of $Z$ at $z$  and an open set $D$ with smooth $\mathscr{C}^2$ boundary in $U$ such that $W \cap j^{-1}(\partial D) =  W \cap \partial \Delta$.

We shall say that the open set  $\Delta \subset Z$ with smooth $\mathscr{C}^2$ boundary is {\bf strongly $q$-concave at a point  $z \in \partial\Delta$} if, in some local holomorphic embedding $j : W \to U$  of $Z$ around $z$ as above, one can define $\Delta$ in  $W$ as the subset   $\{j\circ \varphi > 0 \}\cap W$ where $\varphi$ is a real valued  $\mathscr{C}^2$ function on $U$  such that
\begin{enumerate}
\item  $d\varphi_{j(z)} \not= 0 $ on the tangent space $T_{U,j(z)}$ of $U$ at $j(z)$.
\item  The restriction of the Levi form at $j(z)$ of  $\varphi$ to the complex hyperplane tangent at $j(z)$ to the real hypersurface $\{\varphi(x) = \varphi(j(z))\}$ in $U$  has at most  $q$ non positive eigenvalues.
\end{enumerate}
We shall say that  $\Delta$ is {\bf strongly $q$-concave} if  $\Delta$ is strongly $q$-concave  near each point in $\partial \Delta$.
\end{defn}

\parag{Remark} 
Assume that $Z$ is of pure dimension $q + p$. If  the defining function $\varphi$ of $\Delta$ satisfies Conditions 1 and 2 above, we can compose $\varphi$  with a real strictly increasing (non critical)  convex $\mathscr{C}^{2}$ function (this does not change the level sets  $\{\varphi = \mathrm{constant}\}$), in order that $c\circ\varphi$ is $\mathscr{C}^{2}$ strongly $q$-convex (and non critical) near $z$.

Conversely, if  $\varphi$ is a real valued  $\mathscr{C}^{2}$ function which is  strongly $q$-convex and not critical near at a point $z \in Z$, the open set  $\{ \varphi(x) > \varphi(z)\}$ has a strongly $q$-concave boundary in a neighbourhood of $z$.

\bigskip

With this terminology, using the remarks above, we may give the following reformulation of  Proposition \ref{Hart.}:

  \begin{cor}\label{Hart.cor.}
  Let $n \geq 2$  and  $p \geq 1$  be integers, let  $Z$  be a reduced complex space of pure dimension $ n+p$  and let  $\Delta  := \{\varphi > 0\}$ be an open set with $\mathscr{C}^2$  smooth boundary in $Z$. Let $X_0$ be a $n$-cycle in an open neighbourhood of a point $z \in \partial \Delta$ such that the function $\varphi_{\vert X_{0}}$ is not critical at $z$.

  Assume that  $\Delta$ is strongly $(n-2)$-concave near $z$ ; then there exists a $n$-Hartogs figure $\mathcal{H} := (\mathcal{M}, M, B, j)$ relative to the boundary of $\Delta$, adapted to $X_0$, and such that the point $z$ lies in  $\mathcal{M}$.
 \end{cor}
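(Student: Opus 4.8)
The plan is to put ourselves, near $z$, in the concrete situation of Proposition \ref{Hart.} by a holomorphic change of coordinates, the whole difficulty being the verification of the transversality condition $(*)$, where the non-criticality of $\varphi_{\vert X_{0}}$ is what saves the day. First I would localize: since $\Delta = \{\varphi > 0\}$ has a smooth $\mathscr{C}^{2}$ boundary and is strongly $(n-2)$-concave near $z$, Definition \ref{q-concave} provides a local holomorphic embedding $j$ of a neighbourhood of $z$ into an open set of $\C^{n+p}$ and a $\mathscr{C}^{2}$ defining function, still called $\varphi$, which is non-critical at $z$ and whose Levi form on the complex tangent hyperplane to $\{\varphi = 0\}$ has at most $n-2$ non-positive eigenvalues. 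By the second Remark after Proposition \ref{Hart.} a holomorphic coordinate change sending $z$ to the origin then brings the order-two Taylor expansion of $\varphi$ to the normal form $(@@)$, with $\rho_{2}, \sigma_{1}, \dots, \sigma_{p}$ positive. In these coordinates $z$ is the origin, the complex tangent hyperplane is $\{t_{1} = 0\}$, and the Levi form restricted to it is $\sum_{i=2}^{n} \rho_{i}\vert t_{i}\vert^{2} + \sum_{j=1}^{p} \sigma_{j}\vert x_{j}\vert^{2}$, so its positive eigenspace has dimension at least $(n+p-1)-(n-2) = p+1$.

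The heart of the proof, and the step I expect to be the main obstacle, is to use the remaining freedom in the coordinates to force $(*)$, namely $\vert X_{0}\vert \cap \{t_{1} = \dots = t_{n} = 0\} \subset \{0\}$ near the origin; this is exactly the point of the third Remark after Proposition \ref{Hart.}. Since $d\varphi_{0} = \R(dt_{1})$, the non-criticality of $\varphi_{\vert X_{0}}$ at $z$ forces $t_{1}$ to vanish only to first order along $X_{0}$, so the tangent cone $C_{0}X_{0}$ is not contained in $\{t_{1} = 0\}$ and $C_{0}X_{0} \cap \{t_{1} = 0\}$ is a cone of dimension exactly $n-1$ in $\{t_{1} = 0\} \cong \C^{n+p-1}$. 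I would then choose a $(p+1)$-dimensional subspace $W$ of $\{t_{1} = 0\}$ on which the Levi form is positive-definite and which is transverse to this cone; such a $W$ exists because positive-definiteness is an open condition on $(p+1)$-planes while the positive eigenspace already has dimension $\geq p+1$, and transversality holds for generic planes. Then $C_{0}X_{0} \cap \{t_{1} = 0\} \cap W$ is a one-dimensional cone, hence a finite union of lines through the origin, and a generic hyperplane of $W$ meets it only at $0$; taking such a hyperplane as the $x$-plane $\{t_{1} = \dots = t_{n} = 0\}$ with a Levi-orthogonal basis $x_{1}, \dots, x_{p}$, its Levi-orthogonal complement in $W$ as the $t_{2}$-axis, and a Levi-orthogonal basis $t_{3}, \dots, t_{n}$ of the complement of $W$, preserves the normal form $(@@)$ with $\rho_{2}, \sigma_{1}, \dots, \sigma_{p} > 0$ and yields $C_{0}X_{0} \cap \{t_{1} = \dots = t_{n} = 0\} = \{0\}$, whence $(*)$ after shrinking $\Delta'$. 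Had $\varphi_{\vert X_{0}}$ been critical at $z$ this intersection would be $n$-dimensional and no suitable $p$-plane could exist, which is precisely why the hypothesis is indispensable.

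With $(@@)$ and $(*)$ secured, Proposition \ref{Hart.} applies (its requirement that every local component of $X_{0}$ meet $\Delta$ being automatic here, since, after shrinking $\Delta'$, each component passes through $z$ and, $\varphi$ being non-critical along it, takes positive values arbitrarily close to $z$; compare the fourth Remark after that proposition) and delivers $\alpha \in (\mathbb{R}_{+}^{*})^{2}$ and polydiscs $V \subset\subset \C^{n-2}$, $B \subset\subset \C^{p}$ satisfying its Conditions 1--4. It then only remains to read these as the data of a Hartogs figure: putting $\mathcal{M} := j^{-1}(\mathcal{M}(\alpha)\times V\times B)$ and $M := j^{-1}(M(\alpha)\times V\times B)$, Conditions 1, 2 and 3 are exactly properties i), ii), iii) defining a $n$-Hartogs figure $\mathcal{H} := (\mathcal{M}, M, B, j)$ relative to the boundary of $\Delta$, while Condition 4 is the adaptation condition $(@)$ of Definition \ref{adapte}. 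Finally, since $0 \in \mathcal{M}(\alpha)\times V\times B$ and $z = j^{-1}(0)$, the point $z$ lies in $\mathcal{M}$, which is what was wanted.
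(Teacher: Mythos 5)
Your overall strategy---reduce to Proposition \ref{Hart.} via the normal form $(@@)$ of the second Remark, then read Conditions 1--4 of that proposition as the data of an adapted $n$-Hartogs figure---is the same as the paper's, and your final paragraph is fine. But the central step, your verification of $(*)$, has a genuine gap. You infer from the non-criticality of $\varphi_{\vert X_{0}}$ at $z$ that ``$t_{1}$ vanishes only to first order along $X_{0}$'', hence that $C_{0}X_{0}\not\subset\{t_{1}=0\}$ and that $C_{0}X_{0}\cap\{t_{1}=0\}$ has dimension exactly $n-1$. This inference is false when $z$ is a \emph{singular} point of $\vert X_{0}\vert$: non-criticality is a condition on the Zariski tangent space (this is the paper's definition, see the proof of Lemma \ref{Sard}), and it does not control the tangent cone. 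Take $n=2$, $p=1$, $\varphi = \R(t_{1})+\rho_{1}\vert t_{1}\vert^{2}+\vert t_{2}\vert^{2}+\vert x\vert^{2}$ and $X_{0}=\{t_{1}^{2}=t_{2}^{3}\}$ in $\C^{3}$: the Zariski tangent space of $X_{0}$ at $0$ is all of $\C^{3}$, so $\varphi_{\vert X_{0}}$ is \emph{not} critical at $0$, yet $C_{0}X_{0}=\{t_{1}=0\}$ is the whole complex tangent hyperplane $H$ (the class of $t_{1}$ in $\mathfrak{m}/\mathfrak{m}^{2}$ is nonzero but nilpotent in the graded ring, so it vanishes on the reduced cone). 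Here $C_{0}X_{0}\cap H$ has dimension $n$, every line in $H$ meets it in a line, and your choice of $W$ and of a generic hyperplane of $W$ is impossible---even though $(*)$ is perfectly achievable, since $\vert X_{0}\vert\cap\{t_{1}=0\}=\{t_{1}=t_{2}=0\}$ is only a line. The object to intersect generically with linear subspaces is $\vert X_{0}\vert\cap\{t_{1}=0\}$ itself, which has pure dimension $n-1$ as soon as no local irreducible component of $X_{0}$ lies in the hypersurface $\{t_{1}=0\}$; this is exactly what the paper's third Remark after Proposition \ref{Hart.} isolates as the content of the non-criticality hypothesis, and from there a generic Levi-positive choice of the $x$-plane and of the $t_{2}$-direction gives $(*)$. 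The tangent cone is the wrong tool here: at singular points it can be strictly larger than the directions actually filled by $X_{0}$.

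A secondary omission: the corollary does not assume $z\in\vert X_{0}\vert$ (a cycle in a neighbourhood of $z$ need not pass through $z$, and in the application, Proposition \ref{Rec.M.H.}, the corollary is invoked at \emph{every} boundary point), while your whole argument presupposes $z\in\vert X_{0}\vert$ from the moment you form $C_{0}X_{0}$. The paper treats this case separately: one runs the same construction inside a neighbourhood of $z$ whose closure misses $\vert X_{0}\vert$, so that the adaptation condition $(@)$ of Definition \ref{adapte} holds trivially and the degree of $X_{0}$ in the associated scale is zero. You need this half of the statement as well.
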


\begin{proof}
Using a local embedding of an open neighbourhood of $z$ in an open set of the Zariski tangent space $T_{Z,z}$, it is enough to prove the corollary in the case where $Z$ is an open set in $\C^{n+p'}$,  with $p' \geq p$ an integer. As we may choose the function $\varphi$ strongly $(n-2)$-convex such that $d\varphi_{z} \not= 0$ thanks to the previous remarks, we can choose local coordinates near $z$ in order to be in the situation of Proposition \ref{Hart.} in the case $z \in  X_{0}$, as we assumed that $\varphi_{\vert X_{0}}$ is not critical at $z$. In this case the proposition gives the result.

If $z$ is not in $X_{0}$, the same construction in an open neighbourhood of $z$ with no limit point in $X_{0}$ allows to conclude, and in this case the degree of $X_{0}$ in the (adapted) scale  $E_{\mathcal{H}}$  will be zero.
\hfill $\Box$
\end{proof}

  \subsection{Convexity--concavity}
In this paragraph we want to have a brief discussion about $q$-convexity and $q$-concavity.

\medskip

 Let us consider in an open set $U$ of $\C^{n+p}$ a $\mathscr{C}^{2}$ function $\varphi : U \to \mathbb{R}$ and a non critical zero $z_{0}$ on $\varphi$. So $\varphi(z_{0}) = 0$ and $d\varphi_{z_{0}}\not= 0$. Let $D := \{z \in U \ / \  \varphi(z) < 0 \}$ and let $H$ be the complex hyperplane tangent at $z_{0}$ to the real  hypersurface $\{\varphi = 0\}$  which is smooth near $z_{0}$.

 Our terminology (Norguet--Siu convention, see \cite{NS}) is to say that the open set $D$ is strongly $q$-convex near $z_{0} \in \partial D$ if the restriction to $H$ of the Levi form of $\varphi$ at $z_{0}$ has at most $q$ non positive eigenvalues.

 Looking now at the same open set $D$ but asking for some strong concavity condition, we write $D := \{ z \in U \ / \  -\varphi(z) > 0 \}$. Then we shall say that $D$ is strongly $q$-concave at the point $z_{0}$ if the restriction to $H$ of  Levi form of $-\varphi$ at $z_{0}$ has at most $q$ non positive eigenvalues. 

 If the signature of the restriction to $H$ of the Levi form of  $\varphi$ at $z_{0}$ is given by $(p-1)$ ``plus'' and $n$ ``minus'' we see that that near $z_{0}$ our open set $D$ will be strongly $n$-convex near $z_{0}$ and strongly $(p-1)$-concave near $z_{0}$. So $D$ will be strongly $(p-1)$-concave near $z_{0}$  if the  function $-\varphi$ is strongly $(p-1)$-convex at the point $z_{0}$.

 In order that a $\mathscr{C}^{2}$ exhaustion function $\varphi : Z \to ]0, 2]$ on a reduced complex space $Z$ gives relatively compact $q$-concave subsets $Z_{\alpha} := \{\varphi(z) > \alpha \}$ for each $\alpha \in ]0, 1[$ which is not critical for $\varphi$, we see that it is enough that the Levi form of $\varphi$ at each point in $\varphi^{-1}(]0, 1[)$ has at most $q$ non positive eigenvalues. That is to say that $\varphi$ is strongly $q$-convex on this open set.

 In order to reach the key situation given in Proposition \ref{Hart.} with a $n$-cycle, we need to dispose of a $\mathscr{C}^{2}$-exhaustion $\varphi : Z \to ]0, 2]$  which is  $(n-2)$-strongly convex on the open set $\varphi^{-1}(]0, 1[)$ . So we need to assume that $n \geq 2$.

\subsection{Boxed Hartogs figures}

\begin{defn}\label{emboitee}
\rm  Let $n \geq 2$ and $p \geq 1$ be integers and let  $\Delta \subset\subset \Delta'$ be two open sets in a reduced complex space $Z$. Let  $\mathcal{H} = (\mathcal{M}, M, B, j)$ and  $\mathcal{H}' := (\mathcal{H}', M', B, j)$ be two $n$-Hartogs figures in $Z$ relative to the boundary of $\Delta$ given by the same (local) embedding $j$ and having the same polydisc  $B \subset\subset \C^{p}$. We shall say that these two $n$-Hartogs figures are {\bf boxed} when we can choose  $\alpha, \alpha', V, V'$ in Definition \ref{$n-$Marmite} in order to  have
   \begin{itemize}
   \item $\mathcal{M}'(\alpha')\subset\subset \mathcal{M}(\alpha)$,
   \item $M'(\alpha') \subset\subset M(\alpha)$,
   \item    $V' \subset\subset V$.
   \end{itemize}
 \end{defn}

For instance, if   $\varepsilon > 0$ is small enough, the $n$-Hartogs figures  $(\mathcal{H}, \mathcal{H}^{\varepsilon})$ are boxed (see Definition~\ref{homotheties}).

\begin{prop}\label{Rec.M.H.}
  Let $n \geq 2$ and $p \geq 1$ be integers, let $Z$ be a reduced complex space of pure dimension $ n+p$ and let $\Delta \subset \subset Z $ be an open set with smooth $\mathscr{C}^2$ boundary in $Z$ which is strongly  $(n-2)$-concave. Assume that  $\Delta := \{\varphi > 0 \}$ and let  $X_0$  be a $n$-cycle in an open neighbourhood  $\Delta'$  of the compact set $\bar{\Delta}$, such that any irreducible component of $X_{0}$  meets $\Delta$.  Then there exists a finite family of boxed  $n$-Hartogs figures $(\mathcal{H}'_{a}, \mathcal{H}_{a})_{a \in A}$ relative to the boundary of  $\Delta$, such that the following conditions hold:
  \begin{enumerate}
 \item[\rm 1.] The open sets  $\mathcal{M}'_a $ for $a \in A$ cover the boundary $\partial \Delta$.
 \item[\rm 2.] For each $a \in A$ the Hartogs figures $\mathcal{H}_a$ and $\mathcal{H}'_a$ are adapted to $X_0$.
 \item[\rm 3.] For each $a \in A$ any irreducible component of $ X_0$ meeting $\mathcal{M}_{a}$ meets the open set $M'_a$.
 \item[\rm 4.] No {\bf compact} irreducible component of $X_0 \cap \Delta$ meets the union of the compact sets $\bar{\mathcal{M}}_a$, $a \in A$.
 \end{enumerate}
 \end{prop}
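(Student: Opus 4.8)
The plan is to reduce the statement to a construction carried out at every point of the compact boundary $\partial \Delta$ and then to extract a finite subcover, arranging the four conditions respectively by a good choice of centers (Condition 1), by Corollary \ref{Hart.cor.} (Condition 2), by the branched-covering structure together with the boxing (Condition 3), and by a thinness requirement on the figures (Condition 4). The genuine difficulty lies in the points of tangency of $X_{0}$ with $\partial \Delta$, i.e. the $z \in \vert X_{0}\vert \cap \partial \Delta$ at which $\varphi_{\vert X_{0}}$ is critical. Such criticality is intrinsic: every local defining function of $\Delta$ has the form $u\varphi$ with $u>0$, and on $\{\varphi = 0\}$ one has $d(u\varphi) = u\,d\varphi$, so the restriction to $X_{0}$ is critical for $u\varphi$ exactly when it is for $\varphi$; moreover Condition $(^{*})$ of Proposition \ref{Hart.} really fails there, since a critical point forces $T_{z}X_{0}\subset H$ (the complex tangent hyperplane $\{dt_{1}=0\}$), and an $n$-dimensional subspace of $H$ cannot project isomorphically onto $\C^{n}_{t}$.

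To circumvent this I would first move to a slightly larger concave domain. The relatively compact set $\bar\Delta$ meets only finitely many irreducible components of $X_{0}$, and applying Sard's theorem to the $\mathscr{C}^{2}$ function $\varphi$ on the regular parts of these components, I would choose a value $c<0$ arbitrarily close to $0$ so that $\varphi_{\vert X_{0}}$ is non-critical along $\{\varphi = c\}$. Since strong $(n-2)$-convexity of $\varphi$ is an open condition, it persists in a neighbourhood of $\partial\Delta$, so for $\vert c\vert$ small the open set $\Delta_{c}:=\{\varphi>c\}$ is again strongly $(n-2)$-concave, with $\bar\Delta\subset\Delta_{c}\subset\subset\Delta'$, and every component of $X_{0}$ meets $\Delta\subset\Delta_{c}$. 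At each point of the compact boundary $\partial\Delta_{c}$ the situation of Corollary \ref{Hart.cor.} is now available (either the point lies off $\vert X_{0}\vert$, or $\varphi_{\vert X_{0}}$ is non-critical there), so Corollary \ref{Hart.cor.} provides an adapted $n$-Hartogs figure relative to $\partial\Delta_{c}$ containing the point in its open set $\mathcal{M}$; shrinking it to $\mathcal{H}^{\varepsilon}$ as in Definition \ref{homotheties} yields a boxed adapted pair in the sense of Definition \ref{emboitee}, and by the first Remark following Definition \ref{adapte} these figures are simultaneously relative to the nearby boundary $\partial\Delta$.

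The delicate bookkeeping is the order in which the constants are fixed. By compactness of $\partial\Delta_{c}$ the normal extents $\alpha_{1}$ and the positivity margins produced by the estimates in the proof of Proposition \ref{Hart.} can be bounded below uniformly for all small $\vert c\vert$; I would fix such uniform lower bounds \emph{first} and only \emph{then} select the regular value $c$ with $\vert c\vert$ smaller than them. This guarantees on the one hand that the normal extent $\sim\alpha_{1}$ of each figure exceeds the distance between $\partial\Delta_{c}$ and $\partial\Delta$, so that the sets $\mathcal{M}'_{a}$ cover $\partial\Delta$ and Condition 1 holds; and on the other hand that the positive margins in the convexity estimates exceed $\vert c\vert$, so that the inner cores $M'_{a}$ lie in $\{\varphi>0\}=\Delta$, as required by Definition \ref{$n-$Marmite}. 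I expect this interplay of constants, together with the Sard step, to be the main obstacle.

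It remains to verify Conditions 3 and 4, which I expect to be routine. For Condition 3, adaptedness together with Proposition \ref{prlgt isotrope} presents $X_{0}$ over $\mathcal{M}(\alpha)\times V$ as a branched covering of the connected smooth (hence normal) base $\mathcal{M}(\alpha)\times V$; every irreducible component of such a covering surjects onto the base, hence meets the preimage of the sub-box $M'(\alpha')\times V'$, which is precisely $M'_{a}$, so any irreducible component of $X_{0}$ meeting $\mathcal{M}_{a}$ meets $M'_{a}$. For Condition 4, the finitely many compact irreducible components $C$ of $X_{0}\cap\Delta$ satisfy $\min_{C}\varphi>0$; choosing all the figures thin enough (small $\alpha$, $V$, $B$) that $\varphi<\min_{C}\varphi$ on each $\bar{\mathcal{M}}_{a}\cap\Delta$ makes $\bar{\mathcal{M}}_{a}$ disjoint from every such $C$, which is compatible with the earlier lower bound $\alpha_{1}>2\vert c\vert$ since $\vert c\vert$ was taken small. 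A finite subcover of $\partial\Delta$ by the open sets $\mathcal{M}'_{a}$ then produces the required finite family $(\mathcal{H}'_{a},\mathcal{H}_{a})_{a\in A}$.
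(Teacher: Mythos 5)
Your opening diagnosis is partly right and partly wrong, and the repair you build on it does not work; this is where your proposal genuinely diverges from the paper. The paper's own proof is exactly the pointwise argument you try to avoid: apply Corollary \ref{Hart.cor.} (together with the remarks following Proposition \ref{Hart.}) at \emph{each} $z\in\partial\Delta$, extract a finite subcover of the compact $\partial\Delta$, and set $\mathcal{H}'_{a}:=\mathcal{H}_{a}^{\varepsilon}$ for $\varepsilon>0$ small to get the boxed pairs; the non-criticality needed by Corollary \ref{Hart.cor.} is not dealt with inside this proposition but is secured where the proposition is used, in the proof of Theorem \ref{global}, by choosing the level via Sard's Lemma \ref{Sard} to be regular for $\varphi_{\vert X_{0}}$ as well. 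So you are right that there is a subtlety at tangency points. But your argument that Condition $(^{*})$ ``really fails'' there is incorrect: $(^{*})$ is a \emph{finiteness} condition on a projection, not an injectivity condition on its differential, so $T_{z}X_{0}\subset H$ does not contradict it. For instance, with $\varphi=\R(t_{1})+\vert t_{2}\vert^{2}+\vert x\vert^{2}$ and $X_{0}=\{t_{1}=-t_{2}^{2}\}$ the origin is critical for $\varphi_{\vert X_{0}}$, yet $(^{*})$ holds after exchanging the roles of $t_{2}$ and $x$ (the projection becomes a degree-two branched covering). The genuine obstruction, as Remark 3 after Proposition \ref{Hart.} states, is a local irreducible component of $X_{0}$ lying inside the hypersurface $\{\tau_{1}+Q(\tau,x)=0\}$.

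The fatal gap is the uniformity claim on which your whole transfer from $\partial\Delta_{c}$ back to $\partial\Delta$ rests. Take $n=2$, $p=1$, $\varphi=\R(t_{1})+\vert t_{2}\vert^{2}+\vert x\vert^{2}$ near $0$, and $X_{0}=\{t_{1}=0\}$, which is tangent to $\partial\Delta$ at $0$ with $X_{0}\setminus\{0\}\subset\Delta$, so all hypotheses of the proposition hold. For $c<0$ one has $X_{0}\cap\partial\Delta_{c}=\emptyset$, and in any coordinate system admissible for Proposition \ref{Hart.} centered at a point of $\partial\Delta_{c}$ near $0$, the ``vertical'' $p$-disc osculates the level set of $\R(t_{1})$ through its center to second order (the normal form $(@@)$ forbids any pluriharmonic quadratic term); since $X_{0}$ is exactly a union of such directions, the adaptedness requirement $\vert X_{0}\vert\cap(\bar{W}\times\partial B)=\emptyset$ forces the normal extent to satisfy $\alpha_{1}\lesssim\vert c\vert$, and a short computation shows that \emph{no} figure produced by Corollary \ref{Hart.cor.}, wherever its center on $\partial\Delta_{c}$, can contain the tangency point $0$ in its box, while the $M^{P}$-part of figures centered near $(c,0,0)$ contains points such as $(c+\alpha_{1}/2,0,0)$ where $\varphi<0$. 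Thus your Condition 1 fails ($0\in\partial\Delta$ is never covered) and so does the prerequisite of Condition 2 (those figures have $M\not\subset\Delta$, hence are not Hartogs figures relative to $\partial\Delta$ at all). The bounds you propose to ``fix first'' genuinely depend on the figures, which depend on $c$, and they collapse at the rate $\vert c\vert$ exactly where they are needed; compactness of $\partial\Delta_{c}$ only yields bounds for a fixed $c$, so the circularity cannot be untangled. Covering a tangency point really requires something else: either charts adapted to the cycle rather than to $\varphi$ (in the example above, the automorphism $(t_{1},t_{2},x)\mapsto(t_{1}-at_{2}^{2},x,t_{2})$ with $0<\vert a\vert<1$ and vertical coordinate $t_{2}$ produces an adapted Hartogs figure through $0$, but it is not of the normal form $(@@)$, hence outside the reach of Corollary \ref{Hart.cor.}), or else the additional hypothesis that $\varphi_{\vert X_{0}}$ has no critical point on $\partial\Delta$, which is what the paper implicitly uses.
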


 \parag{Remark} Let  $X_0$ be any $n$-cycle in $\Delta'$. Choosing the open set $\Delta'$ small enough around the compact set $\bar{\Delta}$, we can assume that the cycle $X_0$ has only finitely many irreducible components in $\Delta'$ and that each of them which is not compact  meets  $\partial \Delta$ (see Remark 3 following Proposition \ref{Hart.}).

 \begin{cor}\label{Rec.prolgt.}
 In the situation of Proposition \ref{Rec.M.H.}, if we assume that the open set $\Delta'$ containing $\bar{\Delta}$ is small enough, any irreducible component $\Gamma$ of the cycle $X_0$ in $\Delta'$ satisfies for all $a \in A$ and all $\eta > 0$ small enough:
  $$ \Gamma \cap (\mathcal{M}_{a}^{\eta}\times V^{\eta}_{a} \times B_a )  = prlgt_a\big[\Gamma \cap (M_a\times V_a \times B_a) \big]  $$
  where  $prlgt_a : H(\bar M_a\times \bar V_a, \Sym^k(B_a)) \to H(\overline{\mathcal{M}}^{\eta}_{a}\times \bar V^{\eta}_{a}, \Sym^k(B_a))$ is the holomorphic map of analytic extension built in Proposition  \ref{prlgt isotrope}.
  \end{cor}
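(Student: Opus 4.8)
The plan is to fix once and for all an index $a\in A$ and to suppress it from the notation, working through the local embedding $j=j_a$ so that everything becomes a statement about classifying maps on the bidisc $\mathcal{M}(\alpha)\times V$ and its Hartogs figure $M(\alpha)\times V$. Since $\mathcal{H}$ is adapted to $X_0$, the component $\Gamma$ also satisfies the adaptedness condition $(@)$, namely $\vert\Gamma\vert\cap j^{-1}(\bar{\mathcal{M}}(\alpha)\times\bar V\times\partial B)=\emptyset$. For $\eta>0$ small the compact polydisc $\bar{\mathcal{M}}(\alpha)^{\eta}\times\bar V^{\eta}$ lies in $\mathcal{M}(\alpha)\times V$ and $\Gamma$ still avoids the vertical boundary over it; I would first use this to conclude that the projection $$\pi:\Gamma\cap(\mathcal{M}(\alpha)^{\eta}\times V^{\eta}\times B)\longrightarrow \mathcal{M}(\alpha)^{\eta}\times V^{\eta}$$ is proper. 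Its fibres are then compact analytic subsets of the polydisc $B$, hence finite, so $\pi$ is a branched covering of some degree $k'$, classified by a holomorphic map $h:\mathcal{M}(\alpha)^{\eta}\times V^{\eta}\to\Sym^{k'}(B)$.

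The second step is the comparison of degrees. By Remark~2 following Definition~\ref{adapte} the scale $E_{\mathcal{H}}=(M(\alpha)\times V,B,j)$ is adapted to $\Gamma$, so over the connected open set $M(\alpha)\times V$ the cycle $\Gamma$ is a branched covering of the degree $k$ occurring in the statement, classified by $g\in H(\bar M(\alpha)\times\bar V,\Sym^{k}(B))$. For $\eta$ small one has $M(\alpha)^{\eta}\times V^{\eta}\subset\subset\mathcal{M}(\alpha)^{\eta}\times V^{\eta}$, and restricting both coverings to this common open subset presents one and the same cycle $\Gamma\cap(M(\alpha)^{\eta}\times V^{\eta}\times B)$ classified on one side by $h$ and on the other by $g$. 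Comparing a generic fibre forces $k'=k$ and the agreement $h=g$ on $M(\alpha)^{\eta}\times V^{\eta}$.

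Finally I would feed $g$ into the analytic-extension machinery of Proposition~\ref{prlgt isotrope}: the element $\tilde g:=prlgt(g)$ lies in $H(\bar{\mathcal{M}}(\alpha)^{\eta}\times\bar V^{\eta},\Sym^{k}(B))$, is isotropic, and by construction its associated multiform graph is exactly $prlgt[\Gamma\cap(M\times V\times B)]$. Now $\tilde g$ and $h$ are two holomorphic $\Sym^{k}(B)$-valued, hence $E(k)$-valued, maps on the connected polydisc $\mathcal{M}(\alpha)^{\eta}\times V^{\eta}$ that coincide on the nonempty open subset $M(\alpha)^{\eta}\times V^{\eta}$; by the injectivity of the restriction map of Lemma~\ref{banach} (equivalently, the identity theorem on the connected bidisc) they are equal. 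Passing from classifying maps back to cycles yields $\Gamma\cap(\mathcal{M}^{\eta}\times V^{\eta}\times B)=prlgt[\Gamma\cap(M\times V\times B)]$, which is the asserted identity; the case where $\Gamma$ does not meet $\mathcal{M}_a$ is trivial, since then both sides are empty.

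I expect the crux to be the middle step: showing that $\Gamma$ restricts over the larger bidisc $\mathcal{M}(\alpha)^{\eta}\times V^{\eta}$ to a branched covering whose degree is already the degree $k$ read off on the Hartogs figure $M(\alpha)\times V$, i.e.\ that no sheets of the ambient $\Gamma$ appear over the bidisc that are invisible over $M(\alpha)\times V$. This is precisely what the properness coming from the adaptedness condition $(@)$ guarantees, since over a connected base a proper finite covering has constant degree and that degree can be computed on the overlap.
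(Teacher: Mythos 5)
Your proposal is correct and follows essentially the same route as the paper's proof: both hinge on the adaptedness condition $(@)$ making the projection of $\Gamma$ over the connected bidisc $\mathcal{M}(\alpha)^{\eta}\times V^{\eta}$ proper with finite fibres, so that $prlgt_a\big[\Gamma \cap (M_a\times V_a\times B_a)\big]$ and $\Gamma$ itself are branched coverings of that bidisc which agree over the Hartogs figure and hence coincide everywhere. The only difference is one of language: the paper argues with irreducible components and the identity principle for analytic subsets (every component of $\Gamma\cap\mathcal{M}^{\eta}_a$ surjects onto the connected bidisc, hence is already visible over $M^{\eta}_a$), whereas you argue via constancy of the covering degree and the identity theorem for the classifying maps with values in $\Sym^{k}(B_a)$ -- the same mechanism in dual form.
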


  \parag{Remark} In the situation of the previous corollary, choosing $\varepsilon > 0$ small enough, there exists, for each $a \in A$, a holomorphic extension map which lifts the map  $prlgt_{a}$:
    $$ iprlgt_{a} : \Sigma_{M_{a}, M^{\varepsilon}_{a}}(k) \longrightarrow \Sigma_{\mathcal{M}_{a}^{\eta}, \mathcal{M}^{\eta+\varepsilon}_{a}}(k) .$$
     It allows to extend in this setting an analytic family of branched coverings in $M_{a}$ parametrized by a  Banach analytic set  $S$ and which is isotropic on $S \times M^{\varepsilon}_{a}$ to an analytic family of branched coverings in $\mathcal{M}^{\eta}_{a}$ which is isotropic on $S \times {\mathcal{M}}^{\eta+\varepsilon}_{a}$.

\begin{proof}[Proof of Proposition  \ref{Rec.M.H.}]
Corollary  \ref{Hart.cor.} and the remark following it implies the existence, for each $z \in \partial\Delta$ of a $n$-Hartogs figure $\mathcal{H}_z$ relative to the boundary of $\Delta$, contained in $\Delta'$ and satisfying the following properties:
      \begin{enumerate}
  \item[i)]  $ z \in \mathcal{M}_z $;
  \item[ii)] $\mathcal{H}_z$  is adapted to $X_0$ ;
  \item[iii)] each irreducible component of  $X_{0}$ meeting $\mathcal{M}_{z}$ meets the open set $M_z $;
  \item[iv)] No compact irreducible component of $X_0 \cap \Delta$ meets $\bar{\mathcal{M}}_z$.
  \end{enumerate}
  As the open sets  $ \mathcal{M}_z$ cover the compact set $\partial \Delta$  we can find a finite sub-covering. Then the properties 1, 2, 3 and 4 are consequences of i), ii), iii) and  iv) by letting
  $\mathcal{H}'_{a} := \mathcal{H}_{a}^{\varepsilon}$  and choosing $\varepsilon > 0$ small enough.
\hfill $\Box$
\end{proof}

\begin{proof}[Proof of Corollary \ref{Rec.prolgt.}] 
Let $\Gamma$ be an irreducible component of $X_0$ meeting $\mathcal{M}_a$ for some $a \in A$.  Then $\Gamma$ meets $M_a$. As $\Gamma$ does not meet $\bar M(\alpha)_{a} \times \bar{V_a}\times \partial B_a $ because $\mathcal{H}_a$ is adapted to $X_0$, the intersection $\Gamma \cap \bar{M}_a$ is the graph of an element $\gamma \in H(\bar{M}(\alpha)_a\times \bar{V_a}, \Sym^{k_{a}}(B_a))$ with $k_{a} \in \mathbb{N}^*$ \footnote{as it exists some $(m,v) \in \bar{M}(\alpha)_a\times \bar{V_a}$ such that $\Gamma \cap (\{m,v)\}\times B_a) \not= \emptyset$.}.

The closed analytic subset $Y$ of the open set $\mathcal{M}^{\eta}_a$ defined by $Y : = prlgt_a\big[\Gamma \cap M_a\big] $
is not empty,  of pure dimension $n$ and is contained in $\Gamma $. So it is a union of irreducible components of $\Gamma \cap \mathcal{M}^{\eta}_a$. But it contains a non empty open set in each irreducible component of this branched covering. So these two analytic subsets coincide.

  If an irreducible component of $X_0$ does not meet any $\bar{\mathcal{M}}_a$ it has to be compact and contained in $\Delta$. In this case the desired equality is obvious.
\hfill $\Box$
\end{proof}

\section{The extension and finiteness theorem}

\subsection{Some useful lemmas}

The version below of Sard's lemma is more or less classical.

  \begin{lemma}\label{Sard}
Let $Z$ be a reduced complex space and let $\varphi : Z \to \mathbb{R}$ be a real valued $\mathscr{C}^{1}$ function. Then the set of critical values of  $\varphi$ has Lebesgue measure $0$.
\end{lemma}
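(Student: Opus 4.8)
The plan is to reduce the statement, by localization and by induction on $\dim_{\C} Z$, to the classical Morse--Sard theorem on a complex manifold. Since a reduced complex space is second countable and a countable union of Lebesgue-negligible subsets of $\mathbb{R}$ is again negligible, it suffices to prove the assertion in a neighbourhood of each point of $Z$; so I would first cover $Z$ by countably many open sets $Z_{\nu}$, each carrying a closed embedding $j_{\nu} : Z_{\nu} \hookrightarrow \Omega_{\nu}$ into an open set $\Omega_{\nu} \subseteq \C^{N_{\nu}}$. By the definition of a $\mathscr{C}^{1}$ function on a complex space (equivalently, by the Whitney extension theorem) the restriction $\varphi\vert_{Z_{\nu}}$ is induced by a $\mathscr{C}^{1}$ function on $\Omega_{\nu}$, so that the notion of critical point of $\varphi$ at a smooth point of $Z_{\nu}$, namely the vanishing of the differential of $\varphi$ along the tangent space to the stratum through that point, is well defined in the ambient chart.

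I would then argue by induction on the pure dimension $d := \dim_{\C} Z$. If $d = 0$ the space is discrete and $\varphi(Z)$ is countable, hence negligible. For the inductive step I split $Z = Z_{\mathrm{reg}} \sqcup \mathrm{Sing}(Z)$. The regular locus $Z_{\mathrm{reg}}$ is a complex manifold of complex dimension $d$, and a critical point of $\varphi$ lying in $Z_{\mathrm{reg}}$ is exactly a critical point of the $\mathscr{C}^{1}$ function $\varphi\vert_{Z_{\mathrm{reg}}}$ in the usual sense; the singular locus $\mathrm{Sing}(Z)$ is a reduced complex space of dimension at most $d-1$, whose own set of critical values is negligible by the induction hypothesis applied to $\varphi\vert_{\mathrm{Sing}(Z)}$. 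Since the set of critical values of $\varphi$ on $Z$ is contained in the union of these two sets, it remains only to treat the smooth top stratum.

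The remaining point is therefore the classical statement that a real-valued function on the complex manifold $Z_{\mathrm{reg}}$ has a negligible set of critical values, obtained by covering the critical set with small cubes in the ambient chart $\Omega_{\nu}$, bounding the oscillation of $\varphi$ on each cube by Taylor's formula, and summing. This last step is where I expect the only genuine difficulty to lie: for a one-dimensional target the vanishing of the first differential alone does not control the oscillation finely enough once the real dimension $2d$ of the stratum exceeds one, so the covering argument requires the full strength of the Morse--Sard theorem, hence the differentiability of $\varphi$ matched to the dimension of the stratum, rather than mere continuity of the differential. Modulo this standard regularity input the estimate closes the induction and yields the lemma.
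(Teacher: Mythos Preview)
Your approach---induction on $\dim_{\C} Z$, stratifying into the regular locus and the singular locus, applying the induction hypothesis to $\mathrm{Sing}(Z)$, and invoking the classical Sard theorem on the smooth top stratum---is exactly the paper's argument; the paper also uses (implicitly, via the Zariski tangent space definition) the observation that a critical point of $\varphi$ lying in $\mathrm{Sing}(Z)$ is already critical for $\varphi\vert_{\mathrm{Sing}(Z)}$, which you should state explicitly. The regularity concern you raise in your last paragraph is legitimate and is glossed over in the paper as well: the paper concludes the smooth case with the single sentence ``This is the classical Sard's lemma,'' without addressing that Sard for a $\mathscr{C}^{1}$ map from a real $2d$-manifold to $\mathbb{R}$ genuinely requires higher differentiability (Whitney's counterexample), so the $\mathscr{C}^{1}$ hypothesis in the lemma is, strictly speaking, too weak for the argument as written.
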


\begin{proof} 
Firstly note that a point $z \in Z$ is critical for  $\varphi$ if, by definition, the differential of $\varphi$ vanishes on $T_{Z,z}$, the Zariski tangent space of $Z$ at $z$. Remember also that a complex space is, by definition, countable at infinity ; so $Z$ and  its singular locus have only countably many irreducible components. As a countable union of sets of measure $0$ is again of measure $0$, it is enough to prove the lemma when $Z$ is irreducible. We shall prove the lemma by induction on the integer $\dim Z$. The case $\dim Z = 0$ is obvious. Assume the lemma true for $\dim Z \leq n-1$ for some integer $n \geq 1$ and take an irreducible complex space $Z$ of dimension $n$. The singular set $S$ of $Z$ has dimension at most $(n-1)$, and for each irreducible component $S_{i}$ of $S$ the image of the critical set of $\varphi_{\vert S_{i}}$ has measure  $0$. So the critical set of  $\varphi_{\vert S}$ is again of measure $0$. But a critical point of  $\varphi$ which belongs to $S$ is a critical point of $\varphi_{\vert S}$. So it is enough to show that the set of critical values of $\varphi$ restricted to the complex connected manifold $Z \setminus S$ has measure $0$. This is the classical Sard's lemma.
\hfill $\Box$
\end{proof}

 \begin{lemma}\label{Ouverture}
Let $V $ be an open set and $K$ be a compact set in $\bar U\times \bar B$. The subset  $\mathcal{V}$ in $H(\bar{U}, \Sym^k(B))$ consisting of the $X$ such that any irreducible component meeting $K$ meets $V$ is an open set in  $H(\bar{U}, \Sym^k(B))$.
\end{lemma}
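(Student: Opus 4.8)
The plan is to show that the complement $\mathcal{W} := H(\bar U, \Sym^k(B)) \setminus \mathcal{V}$ is closed. It is convenient to reformulate the condition: for $X \in H(\bar U, \Sym^k(B))$ let $Y(X)$ denote the union of the (closures over $\bar U$ of the) irreducible components $\Gamma$ of $X$ with $\bar\Gamma \cap V = \emptyset$. Then $X \in \mathcal{V}$ if and only if $Y(X) \cap K = \emptyset$, so that $\mathcal{W} = \{X \ / \ Y(X) \cap K \neq \emptyset\}$. Since $H(\bar U, \Sym^k(B))$ is metrizable, it is enough to prove that $\mathcal{W}$ is sequentially closed. Recall that each $X \in H(\bar U, \Sym^k(B))$ is a $k$-sheeted branched covering of $\bar U$ whose values stay in a compact polydisc $\bar B' \subset\subset B$, so that $\vert X\vert \subset \bar U \times \bar B'$ is compact and $X$ has at most $k$ irreducible components; in particular the degrees of the components are bounded by $k$.

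Now let $X_m \in \mathcal{W}$ with $X_m \to X$. For each $m$ choose an irreducible component $\Gamma_m$ of $X_m$ with $\bar\Gamma_m \cap K \neq \emptyset$ and $\bar\Gamma_m \cap V = \emptyset$, and pick $z_m \in \bar\Gamma_m \cap K$. Passing to a subsequence I may assume that $\deg \Gamma_m = d$ is constant. I would then use the following two facts about the topology of the cycle space (see \cite{B75} and \cite[Chapter IV]{BM1}): each component $\Gamma_m$ is itself an element of $H(\bar U, \Sym^{d}(B))$, and a family of components of the convergent sequence $X_m$ admits a subsequence converging in $H(\bar U, \Sym^{d}(B))$ to an effective subcycle $\Gamma$ of $X$, that is, to a sum of irreducible components of $X$. (On the interior $U$ this is the normal families (Bishop) compactness, using that the values stay in the fixed compact $\bar B'$, together with the fact that the limit of the subcycles $\Gamma_m \leq X_m$ is a subcycle of the limit $X$.) Convergence of $\Gamma_m$ to $\Gamma$ in $H(\bar U, \Sym^{d}(B))$ gives Hausdorff convergence $\bar\Gamma_m \to \bar\Gamma$ in $\bar U \times \bar B'$.

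It then remains to pass the two incidence conditions to the limit. Since $V$ is open and $\bar\Gamma_m \cap V = \emptyset$ for all $m$, any point of $\bar\Gamma \cap V$ would be a limit of points $w_m \in \bar\Gamma_m$ lying in $V$ for $m$ large, a contradiction; hence $\bar\Gamma \cap V = \emptyset$. Since $K$ is compact we may assume $z_m \to z \in K$, and $z \in \bar\Gamma$ because $\bar\Gamma_m \to \bar\Gamma$ in the Hausdorff sense; thus $\bar\Gamma \cap K \neq \emptyset$. Finally, $\bar\Gamma$ is a union of irreducible components of $X$, so the component $\Gamma'$ of $X$ through $z$ satisfies $\bar{\Gamma'} \subseteq \bar\Gamma$; hence $\bar{\Gamma'} \cap K \ni z$ and $\bar{\Gamma'} \cap V = \emptyset$, which shows $X \in \mathcal{W}$.

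The delicate point — and the step I would treat with most care — is the behaviour of the irreducible decomposition in the limit: components of $X_m$ may merge as $m \to \infty$, and both incidence conditions involve the behaviour over the boundary $\partial U$, where a sequence bounded in $H(\bar U, \Sym^{d}(B))$ need not be equicontinuous. The openness of $V$ removes half of this difficulty, since $\bar\Gamma = \overline{\Gamma \cap (U \times B)}$ forces the condition $\bar\Gamma \cap V = \emptyset$ to be detected at interior points, where only the normal families convergence on $U$ is needed. The condition that $\bar\Gamma$ meets $K$, when $z$ projects to $\partial U$, is exactly what forces me to invoke the convergence of the components up to the boundary of $\bar U$, i.e. the compactness and continuity properties of the component decomposition in $H(\bar U, \Sym^{d}(B))$ established in \cite{B75} and \cite[Chapter IV]{BM1}.
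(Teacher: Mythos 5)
Your proposal is correct and follows essentially the same route as the paper: extract a convergent subsequence of the offending components $\Gamma_m$, pass the two incidence conditions to the limit subcycle $\Gamma$ of $X$, and conclude because the closures of the irreducible components of $\Gamma$ are irreducible components of $X$. The only notable difference is presentational: the paper extracts convergence merely uniformly on compact subsets of $U\times B$ (so the condition on $V$ is checked at interior points, and the fact that $\bar\Gamma$ still meets $K$ is simply asserted), whereas you invoke convergence of the components in $H(\bar U,\Sym^{d}(B))$, i.e.\ up to the boundary of $\bar U$ — a stronger compactness statement which is indeed true (and provable by a fiber-counting argument using the uniform convergence of the full cycles $X_m$ on $\bar U$), but which is precisely the delicate boundary point that the paper, too, leaves without detailed justification.
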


\begin{proof} 
Let us clarify the meaning of an irreducible component of  an element $X$ in $ H(\bar U, \Sym^{k}(B))$: we call irreducible component of such a $X$ the closure in $\bar U\times B$ of an irreducible component of  the branched covering of $U$ defined by the projection of $X \cap (U \times B)$ on $U$.

Let $X_{0}$ be such that any irreducible component of  $X_{0}$ which meets $K$ meets $V$, and assume that $(X_{\nu})_{\nu \geq 1}$ is a  sequence converging to $X_{0}$ such that for each $\nu \geq 1$ there exists an irreducible component  $\Gamma_{\nu}$ of $X_{\nu}$ meeting $K$ but not $V$. Passing to a subsequence, we may assume that the sequence $(\Gamma_{\nu})_{\nu \geq 1}$ converges uniformely on any compact of $ U\times B$  to a non empty  $n$-cycle $\Gamma$ with closure contained in $X_{0}$ and which is a branched covering of $U$. Then $\bar \Gamma$ meets $K$ and not $V$. Indeed, if $(t_{0}, x_{0})$ would be in $\bar \Gamma \cap V$, there exists open neighbourhoods   $U_{1}$ and $B_{1}$  of $t_{0}$ and  $x_{0}$ respectively in $\bar U$ and   $\bar B$ such that $U_{1}\times B_{1} $ is contained in $V$. But then, as $U_{2} := U_{1}\cap U$ and  $B_{2} := B_{1}\cap B$ are non empty  open sets, for $t_{2}$ in  $U_{2}$ the fibers of the $\Gamma_{\nu}$ at $t_{2}$ for $\nu$ big enough will meet  $\{t_{2}\}\times B_{2}$ and so $V$. As at least one irreducible component of $ \Gamma$ meets $K$ without meeting $V$ and as its closure is an irreducible component of $X_{0}$, this gives a contradiction.
\hfill $\Box$
\end{proof}

Of course, in the case $V = \emptyset$, we get back the fact that the subset in  $H(\bar{U}, \Sym^k(B))$ of  elements  which do not meet $K$ is  open.

\begin{lemma}\label{Recol.}
Let $Z$ be a complex space and let $(\mathcal{U}_i)_{i\in I}$ be an open covering of $Z$. Assume that for each $i \in I$ a closed  $n$-cycle $X_i$ is given in $\mathcal{U}_i$. Assume that  the following patching  condition holds:
$$ \forall (i,j) \in I^2 \quad  X_i \cap \mathcal{U}_j = X_j \cap \mathcal{U}_i $$
as an equality of cycles in  $\mathcal{U}_i \cap \mathcal{U}_j$.
Then there exists a unique closed  $n$-cycle $X$ in $Z$ such that for each $i \in I$ we have $X \cap \mathcal{U}_i = X_i $.
\end{lemma}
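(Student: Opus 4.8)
The plan is to treat a closed $n$-cycle as the datum of its support together with a generic multiplicity along each irreducible component, and to exploit the fact that both pieces of data are purely local in nature; the lemma then amounts to the assertion that these local data glue, i.e.\ that cycles form a sheaf. I would first dispose of uniqueness: if $X$ and $X'$ are two closed $n$-cycles with $X \cap \mathcal{U}_i = X' \cap \mathcal{U}_i = X_i$ for every $i$, then they have the same support and the same multiplicity at each point of the smooth $n$-dimensional locus of that common support; since the $\mathcal{U}_i$ cover $Z$, these local coincidences force $X = X'$.

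For existence I would build the support first. Set $A := \bigcup_{i \in I} \vert X_i\vert$. Taking supports in the patching hypothesis gives $\vert X_i\vert \cap \mathcal{U}_j = \vert X_j\vert \cap \mathcal{U}_i$, whence the computation $A \cap \mathcal{U}_i = \bigcup_j (\vert X_j\vert \cap \mathcal{U}_i) = \bigcup_j (\vert X_i\vert \cap \mathcal{U}_j) = \vert X_i\vert$ for each $i$. As analyticity is local and the $\mathcal{U}_i$ cover $Z$, the set $A$ is a closed analytic subset of $Z$ of pure dimension $n$, and I decompose it into its locally finite family of irreducible components $(\Gamma_\lambda)_{\lambda \in \Lambda}$. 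I then define a multiplicity function $\nu$ on the regular locus $A^* := \mathrm{Reg}_n(A)$: for $x \in A^*$ choose $i$ with $x \in \mathcal{U}_i$ and let $\nu(x)$ be the multiplicity carried by $X_i$ along the unique local component of $\vert X_i\vert$ through $x$. This is independent of $i$, since for $x \in \mathcal{U}_i \cap \mathcal{U}_j$ the patching condition $X_i \cap \mathcal{U}_j = X_j \cap \mathcal{U}_i$ says the two cycles coincide near $x$ and hence carry the same multiplicity there; consequently $\nu$ is locally constant on $A^*$.

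The key point, and the step I expect to be the main obstacle, is to promote $\nu$ to a well-defined multiplicity on each \emph{global} irreducible component $\Gamma_\lambda$, independent of where on $\Gamma_\lambda$ it is sampled. For this I would use that $\Gamma_\lambda \cap A^* = \mathrm{Reg}(\Gamma_\lambda) \setminus \bigcup_{\mu \neq \lambda} \Gamma_\mu$ is \emph{connected}: the regular locus of an irreducible analytic set is connected, and one removes from it only a closed analytic subset of dimension $< n$, which cannot disconnect a connected complex manifold. Hence $\nu$ is constant on $\Gamma_\lambda \cap A^*$, with value $n_\lambda \in \mathbb{N}^*$, and I set $X := \sum_{\lambda} n_\lambda\,[\Gamma_\lambda]$. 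This is a closed $n$-cycle (locally finite family, support $A$). Finally $X \cap \mathcal{U}_i$ and $X_i$ share the support $\vert X_i\vert$ and, by the construction of $\nu$, the same generic multiplicity along every component of $\vert X_i\vert$, so $X \cap \mathcal{U}_i = X_i$ for each $i$, giving existence. The only ingredient beyond formal bookkeeping is the connectedness of the regular part of an irreducible component; the rest follows from the locality of support and of multiplicity.
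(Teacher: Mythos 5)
Your proof is correct, and it is essentially the argument behind this lemma: the paper itself gives no proof, simply deferring to \cite[Chapter IV, Proposition 1.3.1]{BM1}, and the standard proof there proceeds exactly as you do — glue the supports, observe that the multiplicity function is locally constant on the regular locus, and use the connectedness of $\mathrm{Reg}(\Gamma_\lambda)$ minus the lower-dimensional intersections with the other components to get a well-defined multiplicity on each global irreducible component. Your identification of that connectedness statement as the one non-formal ingredient is exactly right.
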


For the easy proof see  \cite[Chapter IV, Proposition 1.3.1]{BM1}.

\medskip

The following variant will be used.

\begin{lemma}[Variant]\label{Var.Recol.}
In the situation of the previous lemma replace the patching condition by the following two conditions:
\begin{enumerate}
\item[\rm 1.] For each  couple $(i,j) \in I^2 $ an open subset $W_{i,j} \subset \subset \mathcal{U}_i \cap \mathcal{U}_j $ is given and we ask that $ X_i \cap W_{i,j} = X_j \cap W_{i,j} $.
\item[\rm 2.] For each couple $(i,j) \in I^2 $ we ask that any irreducible component of the cycle $X_i \cap \mathcal{U}_j $ meets the open set $W_{i,j}$.
\end{enumerate}
Then the conclusion is the same.
\end{lemma}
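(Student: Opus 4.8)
=== MY PROOF PROPOSAL ===

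The plan is to reduce the "Variant" gluing Lemma \ref{Var.Recol.} to the original patching Lemma \ref{Recol.} by verifying that Conditions 1 and 2 force the cycles $X_i$ to agree on the full overlaps $\mathcal{U}_i \cap \mathcal{U}_j$, not merely on the prescribed subsets $W_{i,j}$. The essential observation is that a closed $n$-cycle of finite type is determined on a connected region by its restriction to a nonempty open subset of each irreducible component, so matching the cycles on $W_{i,j}$ together with the property that every irreducible component reaches into $W_{i,j}$ will propagate the equality to all of $\mathcal{U}_i \cap \mathcal{U}_j$.

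First I would fix a couple $(i,j) \in I^2$ and work entirely inside $\mathcal{U}_i \cap \mathcal{U}_j$. Consider an arbitrary irreducible component $\Gamma$ of the cycle $X_i \cap \mathcal{U}_j$. By Condition 2, $\Gamma$ meets the open set $W_{i,j}$, so $\Gamma \cap W_{i,j}$ is a nonempty open subset of $\Gamma$. By Condition 1, on $W_{i,j}$ the cycles $X_i$ and $X_j$ agree as cycles, which means that on $\Gamma \cap W_{i,j}$ the cycle $X_j$ carries the same multiplicity as $X_i$ does on $\Gamma$. Since $\Gamma$ is irreducible and $\Gamma \cap W_{i,j}$ is a nonempty open piece of it, the component of $X_j$ extending this piece is forced to be the closure of $\Gamma$ inside $\mathcal{U}_i \cap \mathcal{U}_j$, carrying the same multiplicity; here I use that an irreducible analytic set is the closure of any of its nonempty open subsets, so $X_j$ must contain $\Gamma$ with the correct multiplicity throughout $\mathcal{U}_i \cap \mathcal{U}_j$. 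Running the symmetric argument with the roles of $i$ and $j$ exchanged shows conversely that every component of $X_j \cap \mathcal{U}_i$ appears in $X_i$ with the same multiplicity. Hence $X_i \cap \mathcal{U}_j = X_j \cap \mathcal{U}_i$ as cycles, which is exactly the patching condition of Lemma \ref{Recol.}.

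Once the full patching condition is established, I would simply invoke Lemma \ref{Recol.} to produce the unique closed $n$-cycle $X$ on $Z$ with $X \cap \mathcal{U}_i = X_i$ for all $i$, and the conclusion follows verbatim.

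I expect the main obstacle to be the multiplicity bookkeeping in the propagation step: one must be careful that the equality of cycles on $W_{i,j}$ transfers not just the support but the correct integer multiplicities to the extended components, and that no spurious component of $X_j \cap \mathcal{U}_i$ that fails to meet $W_{i,j}$ can exist. The finite-type/irreducibility structure of the cycles is what rules this out, since every component of $X_j \cap \mathcal{U}_i$ must, by the symmetric form of Condition 2, itself meet $W_{i,j}$ and therefore be matched. This is a purely local-to-global gluing argument, and once the analytic-continuation principle for irreducible components is in hand, the rest is formal.
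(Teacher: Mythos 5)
Your proposal is correct and follows essentially the same route as the paper: both reduce the variant to Lemma \ref{Recol.} by showing that Conditions 1 and 2 force the full patching condition $X_i \cap \mathcal{U}_j = X_j \cap \mathcal{U}_i$ on every overlap, and then invoke that lemma. The only difference is organizational: the paper runs a descending induction on the (necessarily finite) number of irreducible components of $X_i \cap \mathcal{U}_j$, subtracting at each step a matched component of $X_i$ together with the corresponding part of $X_j$, whereas you argue directly on each component via the identity principle; the underlying mechanism (an irreducible component is determined by its nonempty open trace on $W_{i,j}$, where the multiplicities are matched) is the same in both.
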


\begin{proof} Let $\Gamma$  be an irreducible component  with multiplicity $\delta$ in the cycle  $X_i \cap \mathcal{U}_j $. Let $\Gamma' $ be the irreducible component of $X_i$ which contains $\Gamma$, and put  $X_i = X'_i + \delta.\Gamma'$. Then $\Gamma'$  meets $W_{i,j}$ and there exists a closed analytic subset  $Y$  of pure dimension $n$ in  $\vert X_j\vert $  such that its restriction to $W_{i,j}$ is equal to  $\Gamma' \cap W_{i,j}$: indeed, $Y$ is the union of the irreducible components of $X_{j}$ containing a non empty open set in  $\Gamma' \cap W_{i,j}$. Note that each of these  irreducible components of $X_{j}$ has multiplicity $\delta$ in the cycle $X_{j}$. Then put   $ X_j = X'_j + \delta.Y$. We see that the cycles  $X'_i$  and   $X'_j$  respectively  in  $\mathcal{U}_i$  and  $\mathcal{U}_j $  satisfy again the patching condition  $X'_i \cap W_{i,j} = X'_j \cap W_{i,j} $.

This allows, for fixed   $(i,j)$, to make a descending induction on the number (necessarily finite as  $W_{i,j}$   is relatively compact) of irreducible components of   $X_i \cap \mathcal{U}_j$, to show that the condition $X_i \cap \mathcal{U}_j = X_j \cap \mathcal{U}_i $ holds. This reduces this lemma to the previous one.
\hfill $\Box$
\end{proof}

\subsection{Adjusted scales.}

The definition  of a scale adapted to a cycle is recalled in Definition \ref{adapted scale}.

 \begin{defn}\label{Ecailles doubles} \rm
  \begin{enumerate}
  \item Let $Z$ be a complex space. We shall call {\bf adjusted $n$-scale on $Z$}, written down  $\mathbb{E} : = (U,U',U'',B, B'', j)$, the data of a $n$-scale on $Z$, $E : = (U,B, j)$, with additional polydiscs $U'' \subset\subset U' \subset \subset U $  and  $B'' \subset\subset B$. We call $E$ the {\bf underlying scale of the adjusted scale  $\mathbb{E}$}.
  \item We shall say that {\bf the adjusted scale $\mathbb{E}$ is adapted to a $n$-cycle $X$} in $Z$ when we have
   $$j^{-1}(\bar U \times (\bar B \setminus B''))\cap \vert X\vert = \emptyset.$$
   Note that this implies that the underlying  scale $E$ is adapted to $X$, but this condition is more restrictive.
   \item When the adjusted scale $\mathbb{E}$ is adapted to the $n$-cycle $X$, we shall call {\bf degree of $X$ in $\mathbb{E}$} the degree of $X$ in $E$.
   \item We shall call {\bf center of the adjusted scale}, written down  $D(\mathbb{E})$, or more simply, $D(E)$, the open set  $j^{-1}(U \times B)$ in $Z$ which is also the center of the scale  $E$.
   \item We shall call {\bf domain of isotropy of the adjusted scale}, written down  $D'(\mathbb{E})$, the open set $j^{-1}(U' \times B)$ in $Z$.
   \item We shall call {\bf domain of patching of the adjusted scale}, written down  $D''(\mathbb{E})$, the open set  $j^{-1}(U'' \times B'')$ in $Z$.
  \end{enumerate}
 \end{defn}

\parag{Remarks} \begin{enumerate}
 \item The open set   $D''(\mathbb{E})$ is relatively compact in   $D'(\mathbb{E})$.
 \item When a  $n$-scale $E$  is given, for any compact set $K$ in  $D(E)$, there exists an adjusted $n$-scale  $\E$ such that $E$ is the underlying scale of $\E$ and with $K \subset D''(\E)$. Moreover, if $E$ is adapted to a $n$-cycle $X_{0}$ in $Z$, we may choose $\mathbb{E}$ in order that it is adapted to $X_{0}$.
 \item As for $X \in \mathcal{C}_{n}^{loc}(Z)$  the condition to avoid a given compact subset is open in $\mathcal{C}_{n}^{loc}(Z)$, when the adjusted scale $\mathbb{E}$ is adapted to a cycle $X_{0}$ there exists an open neighbourhood, written down $\Omega_{k}(\mathbb{E})$, of $X_{0}$ in  $\mathcal{C}_{n}^{loc}(Z)$  such that $\Omega_{k}(\mathbb{E})$ is the subset of all $n$-cycles $X$  in $Z$ for which $\mathbb{E}$ is adapted and $\deg_{\mathbb{E}}(X) = k$ where $k :=  \deg_{\mathbb{E}}(X_{0}) $.
 \end{enumerate}

 \bigskip

 Let $Z$ be a reduced complex space and let $\mathbb{E} = (U,U',U'',B, B'', j)$ be an adjusted scale on $Z$. For a given integer $k$ consider the continuous map sending a branched covering in $H(\bar{U}, \Sym^k(B''))$ to its isotropy data on $\bar U'$ (for the notations see what follows Lemma \ref{banach})
  $$ T : H(\bar{U}, \Sym^k(B'')) \to H(\bar{U}', F \otimes E') .$$
  The graph  $\Sigma_{U,U'}(k)$  of this map is a Banach analytic set\footnote{homeomorphic to $H(\bar{U}, \Sym^k(B''))$  via the projection!},  thanks to  \cite[Propositon 2, p.\,81]{B75}  (see also  \cite{BM2}).

  The set of couples $(f, T(f))$ in $\Sigma_{U,U'}(k)$ for which the associated branched covering is contained in $j(Z\cap \bar D(\mathbb{E}))$ is a closed Banach analytic subset of $\Sigma_{U,U'}(k)$ being the pull-back by the projection of the subset of elements in  $H(\bar{U}, \Sym^k(B''))$ contained in $j(Z\cap \bar D(\mathbb{E})) $ which is a closed Banach analytic subset of  $H(\bar{U}, \Sym^k(B''))$ by Proposition 4, p.\,27 of \cite{B75} (see also \cite[Chapter V]{BM2}).

 \begin{defn}\label{Class. Ec. double} \rm
We shall denote  $\mathcal{G}_k(\mathbb{E})$  this Banach analytic set and we shall call it the {\bf the $k$-th classifying space of the adjusted $n$-scale $\mathbb{E}$} on $Z$.
  \end{defn}

We have then a tautological family of $n$-cycles in the open set $D(\mathbb{E})$ parametrized by $\mathcal{G}_k(\mathbb{E})$. It is an analytic family of cycles in the open set $D'(\E)$, in the sense of \cite{B75}, and the fact that, for $k \geq 1$,  locally on $\mathcal{G}_k(\mathbb{E})$, any irreducible component of a branched covering in this family meets $\bar{U}''\times \bar{B}''$ implies that we have a \textsf{f}-analytic family of cycles in  $D'(\E)$.

\medskip

Be careful that the tautological family of cycles on the open set $D(\mathbb{E})$ parametrized by $\mathcal{G}_k(\mathbb{E})$ is not, in general, an analytic family of $n$-cycles ; see the example of  \cite[p.\,83]{B75} (and also  \cite[Chapter IV]{BM1}).

\medskip

The next lemma is an obvious consequence of {\it loc. cit.}

 \begin{lemma}\label{Quasi-univ.1}
 Let  $\mathbb{E}$ be an adjusted $n$-scale on a reduced complex space $Z$ and let $k$ be an integer. The tautological family of $n$-cycles in the open set $D'(\mathbb{E})$ parametrized by $\mathcal{G}_k(\mathbb{E})$  has the following ``almost universal'' property:
  \begin{itemize}
  \item[]  For any analytic family of $n$-cycles $(X_s)_{s \in S}$ in $Z$ parametrized by a Banach analytic set $S$ such that for each $s \in S$ the adjusted scale $\mathbb{E}$  is adapted to $X_{s}$ with  $\deg_E(X_s) = k$, there exists an unique holomorphic map
  $$ f : S \to \mathcal{G}_k(\mathbb{E})$$
  such that the pull-back by $f$ of the tautological family is the restriction to the open set  $D'(\mathbb{E})$ of the given family.
  \end{itemize}
  \end{lemma}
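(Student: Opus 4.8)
The plan is to recover $f$ as the map classifying the family $(X_s)_{s\in S}$ in the adjusted scale $\mathbb{E}$, reading off its holomorphy from the Banachization of Proposition~\ref{banach borne} together with the isotropy condition recalled after the Notations. Fix $s\in S$. Since $\mathbb{E}$ is adapted to $X_s$, the condition $j^{-1}(\bar U\times(\bar B\setminus B''))\cap\vert X_s\vert=\emptyset$ forces the projection onto $U$ of $j_*\big(X_s\cap j^{-1}(U\times B)\big)$ to be a proper branched covering of degree $k$ with fibres contained in $B''$; it is classified by a well-defined element $\Phi(s)\in H(\bar U,\Sym^k(B''))$, the extension to $\bar U$ being legitimate because the domain of $E$ is a neighbourhood of $\bar U\times\bar B$. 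Moreover, as $\vert X_s\vert\subset Z$, this branched covering is contained in $j(Z\cap\bar D(\mathbb{E}))$, so $\Phi(s)$ will produce a point of $\mathcal{G}_k(\mathbb{E})$.

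First I would verify that $\Phi:S\to H(\bar U,\Sym^k(B''))$ is holomorphic. By Definition~\ref{famille analytique}, around each $s_0\in S$ there is a holomorphic classifying map $g:S_0\times U\to\Sym^k(B)$, whose values lie in $\Sym^k(B'')$ by adaptedness; the uniqueness of the classifying map of a branched covering glues these local maps into the single global $\Phi$. Proposition~\ref{banach borne} turns $g$ into a holomorphic map $S_0\to H(\bar U'',\Sym^k(B''))$, and composing with the compact restriction map recalled in the Remark on $\mathcal{B}(U,\Sym^k(\C^p))$ gives holomorphy of $\Phi$ into $H(\bar U,\Sym^k(B''))$.

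The decisive point is the isotropy condition. Because $(X_s)_{s\in S}$ is an analytic family of \emph{cycles} and not merely of multiform graphs, the isotropy data $T^i_m(g)$ are holomorphic on $S_0\times U'$; applying Proposition~\ref{banach borne} once more, the map $s\mapsto T(\Phi(s))\in H(\bar U',F\otimes E')$ is holomorphic. Hence the section
$$ s\longmapsto \big(\Phi(s),\,T(\Phi(s))\big) $$
is a holomorphic map of $S$ into the graph $\Sigma_{U,U'}(k)$ whose image lies, by the first paragraph, in the closed Banach analytic subset $\mathcal{G}_k(\mathbb{E})$. This is the required $f:S\to\mathcal{G}_k(\mathbb{E})$, and by construction the pull-back by $f$ of the tautological family is the restriction of $(X_s)_{s\in S}$ to $D'(\mathbb{E})$.

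Uniqueness is then immediate: the projection $\mathcal{G}_k(\mathbb{E})\to H(\bar U,\Sym^k(B''))$ being injective, $f(s)$ is determined by the element of $H(\bar U,\Sym^k(B''))$ classifying $X_s$ in $E$, which is itself forced by the requirement that the pull-back family equal $X_s\cap D'(\mathbb{E})$. I expect the only non-formal ingredient to be the equivalence ``analytic family of cycles'' versus ``holomorphic isotropy data'' of \cite{B75}, which is precisely why the lemma is an obvious consequence of \emph{loc. cit.}; all the rest is bookkeeping with the Banachization.
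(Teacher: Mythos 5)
Your overall strategy is the intended one: the paper gives no actual proof of Lemma \ref{Quasi-univ.1} (it simply declares it an obvious consequence of \cite{B75}), and the argument it is appealing to is precisely what you attempt --- read off the classifying element of $X_s$ in the adjusted scale, Banachize via Proposition \ref{banach borne}, use the isotropy condition to see that the resulting section lands holomorphically in $\Sigma_{U,U'}(k)$, and cut down to $\mathcal{G}_k(\mathbb{E})$ by the containment condition. Your uniqueness argument (injectivity of the projection of the graph, plus analytic continuation from $U'$ to $U$) is also correct.

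However, the step producing holomorphy of $\Phi$ into $H(\bar U,\Sym^k(B''))$ fails as written. Proposition \ref{banach borne} converts a holomorphic map $g\colon S_0\times U\to\Sym^k(B)$ into a holomorphic map into $\mathcal{B}(U''',\cdot)$ only for polydiscs $U'''\subset\subset U$, and the restriction maps recalled in the Remark go from $\mathcal{B}(U,E(k))$ to $H(\bar U^{\varepsilon},E(k))$, i.e.\ always towards \emph{smaller} polydiscs. Composing a holomorphic map into $H(\bar U'',\cdot)$ with restrictions can therefore never yield a map into $H(\bar U,\cdot)$ with $\bar U''\subset U$: boundary behaviour on $\bar U$ cannot be recovered from data on a relatively compact subpolydisc. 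This distinction is not pedantic; the paper's entire finiteness mechanism (shrinkage, compactness of restriction maps) lives exactly in the gap between holomorphy over $U$ and holomorphy over $\bar U$. The correct mechanism is to enlarge the scale rather than restrict: since $j\colon Z_E\to W$ with $W$ an open neighbourhood of $\bar U\times\bar B$, and since adaptedness of $\mathbb{E}$ to $X_{s_0}$ is a condition on the compact set $\bar U\times(\bar B\setminus B'')$, one can choose a polydisc $U_1$ with $\bar U\subset U_1$ and $\bar U_1\times\bar B\subset W$ such that $E_1:=(U_1,B,j)$ is still adapted to $X_{s_0}$ with degree $k$. Applying the definition of analytic family (in its Banach-analytic form, cf.\ the footnote to Theorem \ref{Th. de finitude}) to $E_1$ gives a holomorphic isotropic classifying map on $S_0\times U_1$; now Proposition \ref{banach borne} applied with $U\subset\subset U_1$ (resp.\ $U'\subset\subset U_1$ for the isotropy data, which are holomorphic on all of $S_0\times U_1$, not merely on $S_0\times U'$ as you state) gives holomorphy into $\mathcal{B}(U,E(k))$ (resp.\ $\mathcal{B}(U',F\otimes E')$), with values in the closed subspaces $H(\bar U,\Sym^k(B''))$ (resp.\ $H(\bar U',F\otimes E')$), hence holomorphy into those subspaces. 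The local maps so obtained glue by uniqueness of classifying elements, and the rest of your argument then goes through.
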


Of course, conversely, such a holomorphic map gives a \textsf{f}-analytic family of $n$-cycles on the open set $D'(\mathbb{E})$.

Note that the pull-back family is in fact defined on the open set $D(\E)$ but, as already noticed above, it may not be analytic outside  $D'(\mathbb{E})$.

\medskip

  As a consequence of this  ``almost universal'' property, we obtain that for any analytic family $(X_s)_{s \in S}$ of $n$-cycles in $Z$ such that for a point $s_0 \in S$ the adjusted $n$-scale $\mathbb{E}$ is adapted to the cycle $X_{s_{0}}$ with $\deg_{\mathbb{E}}(X_{s_0}) = k$, there exists an open neighbourhood  $S'$  of  $s_0$  in  $S$  such that the previous lemma applies for the family parametrized by $S'$. So we shall have a holomorphic classifying map  $ f : S' \to \mathcal{G}_k(\mathbb{E})$  in this situation.

\medskip

  We shall generalize now the concept of classifying space to the case of  a finite family of adjusted $n$-scales.

  \begin{defn}\label{Don. de Rec.} \rm
Consider a reduced  complex space $Z$ and a finite family of adjusted $n$-scales   $(\E_i)_{i \in I}$   on $Z$. Assume that they are adapted to a given finite type  $n$-cycle  $\hat{X}_0$  in  $Z$. Assume that any irreducible component of $\hat{X}_0$ meets the open set  $W'' : = \bigcup_{i \in I} D''(\E_i)$.

We shall call {\bf patching data for  $\hat{X}_0$  associated to the family  $(\E_i)_{i \in I}$}, written $\mathcal{R}((\E_i)_{i \in I}, F)$  or more simply  $\mathcal{R}$  when there is no ambiguity, a finite collection of $n$-scales $(F_{i,j,h})$ for $(i,j) \in I^2$, $i \not= j$,  where  $h$  belongs to a finite set  $H(i,j)$ for each couple $(i,j) \in I^2$, $i \not= j $, such that the following properties hold:
\begin{enumerate}
\item[i)] $F_{i,j,h} $ is a  $n$-scale on the open set $D'(\mathbb{E}_{i}) \cap D'(\mathbb{E}_{j})$.
\item[ii)] The $n$-scales  $F_{i,j,h} $ are adapted to $\hat{X}_{0}$.
\end{enumerate}
We shall say that the patching data  $\mathcal{R}$ are {\bf complete} when  the following condition also holds:
\begin{enumerate}
\item[iii)] For each  $i \not= j$ given, the union of domains of the scales  $F_{i,j,h}$, $h \in H(i,j)$, covers the compact subset $\overline{D''(\mathbb{E}_{i})} \cap \overline{D''(\mathbb{E}_{j})}$ of $D'(\mathbb{E}_{i})\cap D'(\mathbb{E}_{j})$.
\end{enumerate}
\end{defn}

 \parag{Notations}\begin{enumerate}
\item In the sequel, when we shall consider a reduced complex space $Z$ and a finite family of adjusted $n$-scales $(\E_i)_{i \in I}$, adapted to a finite type $n$-cycle $\hat{X}_0$ in $Z$, such that any irreducible component of  $\hat{X}_0$ meets the open set
 $$W'' : = \bigcup_{i \in I} D''(\E_i),$$
 we shall say that the family $(\E_i)_{i \in I}$ is {\bf convenient} for  $\hat{X}_0$.
\item In this setting we shall use the following definitions :
\begin{itemize}
\item $W : = \bigcup_{i \in I} D(\E_i)$ ;
\item $W' : = \bigcup_{i \in I} D'(\E_i)$ ;
\item $W'' : = \bigcup_{i \in I} D''(\E_i)$ ;
\item $K : =  \bigcup_{i\in I}  j_i^{-1}(\bar{U}_i \times (\bar B_i\setminus B''_{i}))$.
\item When the family $(\E_i)_{i \in I}$ is convenient for a finite type $n$-cycle $\hat{X}_0$, $\mathcal{K}$ will be a compact neighbourhood of $K$ disjoint from $\hat{X}_0$.
\end{itemize}
\item For $\tilde{X} \in \prod_{i \in I} \mathcal{G}_{k_i}(\E_i)$ we shall denote by $X^i$ the closed cycle in $D(\E_i)$ associated to the $i$-th component of $\tilde{X}$.
\end{enumerate}

\begin{lemma}\label{tech.}
Let  $(\E_i)_{i \in I}$  be a finite family of adjusted $n$-scales on $Z$, convenient for a $n$-cycle $\hat{X}_0$ of finite type in $Z$, and let $\mathcal{R}$ be some corresponding  complete patching data. There exists an open neighbourhood $\mathcal{V}$ of the image $\tilde{X}_0$ of  $\hat{X}_0$ in the product  $\prod_{i \in I} \mathcal{G}_{k_i}(\E_i)$ such that  for each $\tilde{X} \in \mathcal{V}$  we have the following properties:
\begin{enumerate}
\item[\rm 1.] No $X^i$ meets the compact set $\mathcal{K}$.
\item[\rm 2.] For each $i \in I$, any irreducible component of  $X^i$ meeting  $\overline{D''(\mathbb{E}_{i})} \cap \overline{D''(\mathbb{E}_{j})}$ with $j \not= i$, meets the open set $\bigcup_{h} D(F_{i,j,h})$.
\item[\rm 3.]  For each $(i,j,h)$  the scale $F_{i,j,h}$  is adapted to  $X^i$   and   $X^j$.
\item[\rm 4.]  For each $(i,j,h)$ we have $\deg_{F_{i,j,h}}(X^i) = \deg_{F_{i,j,h}}(X^j) = \deg_{F_{i,j,h}}(\hat{X}_0) = k_{i,j,h}$.
\end{enumerate}
\end{lemma}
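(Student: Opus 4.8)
\emph{Plan.} The statement is an openness assertion: each of the four properties already holds for the base cycle $\hat X_0$, each is an open condition on the product $\prod_{i\in I}\mathcal{G}_{k_i}(\E_i)$, and, since $I$ and every $H(i,j)$ are finite, the neighbourhood $\mathcal V$ is obtained as the intersection of the finitely many resulting open sets. Recall that $\tilde X_0$ is the image of $\hat X_0$, so for each $i$ the tautological cycle $X_0^i$ is the restriction of $\hat X_0$ to $D(\E_i)$; on the open set $D'(\E_i)$ it coincides with $\hat X_0$, and there the tautological family parametrized by $\mathcal{G}_{k_i}(\E_i)$ is an \emph{analytic} family of $n$-cycles (whereas it need not be analytic on the larger centre $D(\E_i)$). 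For each relevant index, the cycle $X^i$ restricted to $D'(\E_i)$, together with its classifying element in $H(\bar U_i,\Sym^{k_i}(B''_i))$, depends continuously on the point of $\mathcal{G}_{k_i}(\E_i)$.

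\emph{Properties 1, 3 and 4.} At $\tilde X_0$ the support of $X_0^i$ is contained in $\hat X_0$, hence disjoint from $\mathcal K$, which was chosen disjoint from $\hat X_0$; and the condition that a cycle avoids a fixed compact set is open in $\mathcal{C}_n^{loc}$ (the case $V=\emptyset$ of Lemma~\ref{Ouverture}, i.e. the remark following it). Pulling this back by the continuous map $\mathcal{G}_{k_i}(\E_i)\to\mathcal{C}_n^{loc}(D(\E_i))$ gives property 1. For properties 3 and 4, each scale $F_{i,j,h}$ lives on $D'(\E_i)\cap D'(\E_j)$ and is adapted to $\hat X_0$ by condition~ii) of Definition~\ref{Don. de Rec.}; since $X_0^i=\hat X_0=X_0^j$ on the domain of $F_{i,j,h}$, the scale $F_{i,j,h}$ is adapted to $X_0^i$ and to $X_0^j$ with the same degree, and we set $k_{i,j,h}:=\deg_{F_{i,j,h}}(\hat X_0)$. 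The set of cycles for which a fixed scale is adapted with a fixed degree is open in $\mathcal{C}_n^{loc}$ (conditions~i) and ii) of Definition~\ref{famille analytique}, exactly as in Remark~3 after Definition~\ref{Ecailles doubles}); pulling these open conditions back by the continuous restriction maps $\mathcal{G}_{k_i}(\E_i)\to\mathcal{C}_n^{loc}(D'(\E_i)\cap D'(\E_j))$, for all admissible $(i,j,h)$, yields an open neighbourhood on which properties 3 and 4 hold.

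\emph{Property 2.} This is precisely the openness furnished by Lemma~\ref{Ouverture}, read in the chart of the scale $\E_i$. There the compact overlap $\overline{D''(\E_i)}\cap\overline{D''(\E_j)}$ becomes a compact $K\subset\bar U_i\times\bar B''_i$ and the open set $\bigcup_h D(F_{i,j,h})$ becomes an open set $V$; completeness, i.e. condition~iii) of Definition~\ref{Don. de Rec.}, guarantees that every irreducible component of $\hat X_0$ meeting that overlap meets $\bigcup_h D(F_{i,j,h})$, so property 2 holds at $\tilde X_0$. Lemma~\ref{Ouverture} then says that the set of elements of $H(\bar U_i,\Sym^{k_i}(B''_i))$ all of whose irreducible components meeting $K$ meet $V$ is open; pulling it back through the homeomorphism of $\mathcal{G}_{k_i}(\E_i)$ onto its image in $H(\bar U_i,\Sym^{k_i}(B''_i))$ provides the last open condition.

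\emph{Main obstacle.} The only point requiring care is to apply every openness statement to the tautological family over $D'(\E_i)$ rather than over the full centre $D(\E_i)$, since the family is analytic only on $D'(\E_i)$. This is exactly why the patching scales $F_{i,j,h}$ are required to be scales on $D'(\E_i)\cap D'(\E_j)$: it makes the continuity of the classifying and restriction maps, as well as the adaptedness statements of Definition~\ref{famille analytique}, legitimately available there. Once this is respected, the lemma reduces to a finite intersection of the open sets produced above.
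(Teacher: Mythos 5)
Your overall strategy coincides with the paper's (very condensed) proof: Conditions 1, 3 and 4 are open conditions of the type ``avoid a fixed compact set'' / ``constancy of the degree in an adapted scale'', Condition 2 is handled through Lemma~\ref{Ouverture}, and $\mathcal{V}$ is the finite intersection of the resulting open sets. Your extra details are sound: reducing Condition 1 to the case $V=\emptyset$ of Lemma~\ref{Ouverture}, pulling every condition back through the continuous projections and the homeomorphism of $\mathcal{G}_{k_i}(\E_i)$ onto its image in $H(\bar U_i,\Sym^{k_i}(B''_i))$, and implicitly identifying irreducible components of $X^i$ with components of the branched covering (legitimate here because the compact overlap and the set $\bigcup_h D(F_{i,j,h})$ sit over $\bar U''_i$ and $\bar U'_i$, hence over the interior of $U_i$).

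The one place where you diverge from the paper, and where your argument has a soft spot, is the base-point verification of Condition 2. The paper obtains Condition 2 from Condition 1, from condition ii) of Definition~\ref{Don. de Rec.} (adaptedness of the $F_{i,j,h}$ to $\hat X_0$) and from Lemma~\ref{Ouverture}; you instead invoke the completeness condition iii). Now iii) asserts that the \emph{domains} of the scales $F_{i,j,h}$ cover the compact overlap, and by Definition~\ref{adapted scale} the domain of a scale is the open set $Z_{F_{i,j,h}}$ on which its embedding is defined, which is in general strictly larger than its centre $D(F_{i,j,h})$. Your sentence ``completeness guarantees that every irreducible component of $\hat X_0$ meeting the overlap meets $\bigcup_h D(F_{i,j,h})$'' silently reads iii) as a covering by \emph{centres}. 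Under that reading the claim is indeed immediate --- but then it is immediate for \emph{every} cycle, not just $\hat X_0$ (any component meeting the overlap meets the union of centres at the very point of meeting), so Condition 2 would hold on all of $\prod_i\mathcal{G}_{k_i}(\E_i)$ and the appeal to Lemma~\ref{Ouverture} would be superfluous. Under the literal reading, a component of $\hat X_0$ may meet the overlap at a point of some $Z_{F_{i,j,h}}$ whose image under the embedding of $F_{i,j,h}$ lies outside the closed polydisc defining its centre, and nothing in iii) alone forces that component to reach any centre; this is precisely where the paper's proof brings in the adaptedness condition ii) and Condition 1. So you should either justify the ``centres'' reading of iii) (and then state frankly that Condition 2 needs no openness argument), or supply the missing verification at $\tilde X_0$ along the paper's lines, using ii) and Condition 1.
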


\begin{proof} Conditions 1, 3 and 4  are clearly open. An easy consequence of Condition 1, of the inclusion ii) of Definition \ref{Don. de Rec.} and of Lemma \ref{Ouverture} is that Condition 2 is also open.
\hfill $\Box$
\end{proof}

For a $n$-scale  $E := (U, B, j)$ on $Z$ we shall abbreviate $H(\bar U, \Sym^{k}(B))$ in $G_{k}(E)$.

\medskip

When we consider a cycle $X_{0}$ in an adapted scale $E:=(U, B, j)$ and when we dispose of a $n$-scale $F := (V, C, h)$ on $U\times B$, adapted to $X_{0}$, where $h$ is given by an isomorphism of an open set in $U\times B$ into some open neighbourhood of $\bar V\times \bar C$ in $\C^{n}\times \C^{p}$, we have a well-defined map of an open neighbourhood $\mathcal{U}$ of $X_{0}$ is $H(\bar U, \Sym^{k}(B))$ into $H(\bar V, \Sym^{l}(C))$, where $l := \deg_{F}(X_{0})$, sending $X \in \mathcal{U}$ to the multiform graph associated to $h_{*}(X)$ in the scale $F$. This is a consequence of the fact that the condition $X_{0}\cap h^{-1}(\bar V\times \partial C) = \emptyset$  is open in $H(\bar U, \Sym^{k}(B))$ and that the degree of $X$ near enough $X_{0}$ in the adapted scale $F$ will be equal to $l$.

Such a map, which will be called a {\bf change of scale}, is not holomorphic in general but becomes holomorphic when we add the isotropy condition:

precisely, if $U' \subset\subset U$ and if  $h^{-1}(\bar V\times \bar C) \subset U'\times B$, then the change of scale map
$$ \Sigma_{U,U'} \to H(\bar V, \Sym^{l}(C))$$
will be holomorphic (see Theorem 4, p.\,66 in \cite{B75}).

\begin{defn}\label{Class. fam. conv.} \rm
Let   $(\E_i)_{i \in I}$   a finite family of adjusted $n$-scales on $Z$, convenient for a $n$-cycle  $\hat{X}_0$ of finite type in $Z$, and let  $\mathcal{R}$  be some corresponding  patching data.
Let  $k_i$  be the degree of  $\hat{X}_0$   in the adjusted scale  $\E_i$. For each  $(i,j,h)$, $i \not= j$  we have a couple of holomorphic maps
$$\xymatrix{ \prod_{\alpha \in I} \mathcal{G}_{k_{\alpha}}(\mathbb{E}_{\alpha}) \ar@<1 ex> [r] \ar@<-1ex> [r] & G_{k_{i,j,h}}(F_{i,j,h})}$$
obtained by the changes of scales   $\E_i \to F_{i,j,h}$   and    $\E_j \to F_{i,j,h}$, because, by construction, we have  $D(F_{i,j,h}) \subset \subset D'(\E_i) \cap D'(\E_j)$.

We shall denote by $S(\mathcal{R})$  the intersection of the kernels of these double maps\footnote{The kernel of a double map $f,g : A \to B$ is the pull-back of the diagonal in $B \times B$ by the map $(f,g) : A \to B\times B$.} with the open set  $\mathcal{V}$  built in Lemma \ref{tech.}.  It is a Banach analytic set and we shall call it the {\bf classifying space associated to  $(\E_i)_{i \in I}, \hat{X}_0$ and $ \mathcal{R}$}.
  \end{defn}

  Remark that the patching data $\mathcal{R}$ are not assumed to be complete in the previous definition.

  \begin{prop}\label{Pte q-univ. class.}
  Consider a finite family of adjusted $n$-scales which is convenient for the $n$-cycle  $X_0$   in $Z$ and let  $\mathcal{R}$   be some  {\bf complete} patching data associated. Keeping the previous notations we have for each   $(X^i)_{i \in I} \in S(\mathcal{R})$  a unique $n$-cycle $X \in \mathcal{C}^{\mathsf{f}}_n(W')$ such that  $X \cap D'(\E_i) = X^i \cap D'(\E_i)$, $\forall i \in I $.

  Moreover, this defines a  tautological family of cycles in  $W'$  which is a $\mathsf{f}$-analytic family of cycles satisfying the following ``almost universal'' property:
  \begin{itemize}
 \item[] For any analytic family of $n$-cycles  $(X_s)_{s \in S}$  in an open neighbourhood of  $\bar{W}$  parametrized by a Banach analytic set  $S$\footnote{Recall that, by definition, this means that for any $n$-scale $E := (U, B, j)$ on $Z$,  adapted to some $X_{s_{0}}, s_{0} \in S$, with $\deg_{E}(X_{s_{0}}) = k$, we have an open neighbourhood $S_{0}$ of $s_{0}$ in $S$ and a classifying map for the corresponding family of branched coverings $f : S_{0}\times U \to \Sym^{k}(B)$ which is {\bf isotropic}.}  such that for  $s_0 \in S$  we have  $X_{s_0} = X_0$  in a neighbourhood of   $\bar{W}$, there exists an open neighbourhood  $S'$  of  $s_0$   in   $S$   and a unique holomorphic map
  $$ f : S' \to S(\mathcal{R})$$
 such that the pull-back by   $f$  of the tautological family parametrized by   $ S(\mathcal{R})$  is the restriction to the open set   $W'$    of the family  $(X_s)_{s \in S'}$.
  \end{itemize}
  \end{prop}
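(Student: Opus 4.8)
The plan is to prove the two assertions in turn: first, for each point $(X^i)_{i \in I} \in S(\mathcal{R})$, the existence and uniqueness of a glued cycle $X \in \mathcal{C}_{n}^{\mathsf{f}}(W')$ with $X \cap D'(\mathbb{E}_i) = X^i \cap D'(\mathbb{E}_i)$; and second, the almost universal property. For the gluing I would apply the variant patching lemma, Lemma \ref{Var.Recol.}, to the open covering $\big(D'(\mathbb{E}_i)\big)_{i \in I}$ of $W'$, with the cycles $X^i \cap D'(\mathbb{E}_i)$ and the overlap sets $W_{i,j} := \bigcup_{h \in H(i,j)} D(F_{i,j,h})$. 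By construction $W_{i,j} \subset\subset D'(\mathbb{E}_i) \cap D'(\mathbb{E}_j)$, since each $F_{i,j,h}$ is an $n$-scale on $D'(\mathbb{E}_i) \cap D'(\mathbb{E}_j)$, and the equality $X^i \cap W_{i,j} = X^j \cap W_{i,j}$ is precisely the condition of belonging to the kernel of the double maps defining $S(\mathcal{R})$ in Definition \ref{Class. fam. conv.}: the two changes of scale $\mathbb{E}_i \to F_{i,j,h}$ and $\mathbb{E}_j \to F_{i,j,h}$ yielding the same class forces $X^i$ and $X^j$ to coincide on each center $D(F_{i,j,h})$.

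The remaining hypothesis of Lemma \ref{Var.Recol.}, that every irreducible component of $X^i \cap D'(\mathbb{E}_j)$ meets $W_{i,j}$, is where I would invoke the completeness of $\mathcal{R}$ (condition iii of Definition \ref{Don. de Rec.}) together with Lemma \ref{tech.}: completeness guarantees that the domains of the $F_{i,j,h}$ cover $\overline{D''(\mathbb{E}_i)} \cap \overline{D''(\mathbb{E}_j)}$, while Lemma \ref{tech.} (condition 2) ensures that each relevant irreducible component of $X^i$ reaches into $\bigcup_h D(F_{i,j,h})$. Lemma \ref{Var.Recol.} then produces the unique cycle $X$ on $W'$. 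That $X$ is of finite type follows from the finiteness of the index set $I$ and of each degree $k_i$, so that $X$ has only finitely many irreducible components; and the resulting tautological family over $S(\mathcal{R})$ is $\mathsf{f}$-analytic because, locally on each $\mathcal{G}_{k_i}(\mathbb{E}_i)$, every irreducible component of the tautological branched covering meets the compact set $\bar{U}''_i \times \bar{B}''_i$ (the property recorded after Definition \ref{Class. Ec. double}), which gives quasi-properness of the graph over the parameter space, the compatibility over the $W_{i,j}$ making the local $\mathsf{f}$-analytic families glue.

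For the almost universal property I would start from an analytic family $(X_s)_{s \in S}$ near $\bar W$ with $X_{s_0} = X_0$, and apply the almost universal property of each classifying space, Lemma \ref{Quasi-univ.1}, to obtain on a common open neighbourhood $S'$ of $s_0$ holomorphic maps $f_i : S' \to \mathcal{G}_{k_i}(\mathbb{E}_i)$ classifying the restriction of the family to $D'(\mathbb{E}_i)$. Setting $f := (f_i)_{i \in I} : S' \to \prod_{i \in I} \mathcal{G}_{k_i}(\mathbb{E}_i)$, I would check that $f$ factors through $S(\mathcal{R})$: the two changes of scale $\mathbb{E}_i \to F_{i,j,h}$ and $\mathbb{E}_j \to F_{i,j,h}$ applied to the single family $(X_s)$ necessarily agree, since both compute the class of the same cycle $X_s$ in the scale $F_{i,j,h}$, so $f$ lands in the kernel of every double map; and since $f(s_0) = \tilde{X}_0$ lies in the open set $\mathcal{V}$ of Lemma \ref{tech.}, shrinking $S'$ keeps the image inside $\mathcal{V}$. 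Holomorphy of $f$ is inherited from the $f_i$, uniqueness of $f$ from the uniqueness clause of Lemma \ref{Quasi-univ.1}, and the identity of the pull-back family with the restriction of $(X_s)$ to $W'$ is immediate from $X_s \cap D'(\mathbb{E}_i) = X^i_s \cap D'(\mathbb{E}_i)$ and the uniqueness in the gluing step.

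The step I expect to be the main obstacle is the verification of the meeting hypothesis (condition 2) of Lemma \ref{Var.Recol.} over the full overlaps $D'(\mathbb{E}_i) \cap D'(\mathbb{E}_j)$, rather than only over the patching domains $D''(\mathbb{E}_i) \cap D''(\mathbb{E}_j)$ for which completeness of $\mathcal{R}$ is stated: one must ensure that no irreducible component of $X^i$ lying in the larger overlap escapes the covering by the centers $D(F_{i,j,h})$, and this is exactly what the combination of Lemma \ref{tech.} with the convenient and finite-type hypotheses on $\hat{X}_0$ is designed to control. A secondary delicate point is to confirm that $f$ genuinely lands in $S(\mathcal{R})$, namely that the kernel of the double maps is the correct analytic translation of the statement ``$X^i$ and $X^j$ arise from one and the same family'', which is what makes $S(\mathcal{R})$ the right representing object.
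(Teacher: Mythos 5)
Your proposal takes essentially the same route as the paper: the paper's own (very terse) proof applies Lemma \ref{Var.Recol.} with exactly your choices $\mathcal{U}_i := D'(\mathbb{E}_i)$ and $W_{i,j} := \bigcup_h D(F_{i,j,h})$, relying on Lemma \ref{tech.} and the completeness of $\mathcal{R}$, and then declares the $\mathsf{f}$-analyticity ``obvious'' and the almost universal property ``clear''. Your expansions --- the kernel-of-double-maps condition yielding the overlap equality on each $D(F_{i,j,h})$, the component count bounding the number of irreducible components by $\sum_i k_i$, and Lemma \ref{Quasi-univ.1} supplying the classifying maps $f_i$ whose product factors through $S(\mathcal{R})$ --- are precisely the details the paper leaves implicit, including the delicate point about components over the full overlaps $D'(\mathbb{E}_i)\cap D'(\mathbb{E}_j)$, which the paper passes over just as briefly as you flag it.
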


\begin{proof} For each  $\tilde{X} \in \mathcal{V}$   any   $X^i$   does not meet   $\mathcal{K}$. As $\mathcal{R}$ is complete, we may use Lemma \ref{Var.Recol.} with   $\mathcal{U}_i : = D'(\E_i)$ and $W_{i,j} = \bigcup_h D(F_{i,j,h})$  to associate to   $\tilde{X}$   a  finite type $n$-cycle $X$   of the open set   $W'$.~The \textsf{f}-analyticity of the so defined family is obvious.
  The ``almost universal'' property is then clear.
\hfill $\Box$
\end{proof}

  Note that this proposition implies that the map $S(\mathcal{R}) \to \mathcal{C}_{n}^{\mathsf{f}}(W')$ classifying the tautological family of $n$-cycles in $W'$ is a holomorphic map.

  \subsection{Shrinkage.}

   \begin{defn}\label{Homoth.} \rm
   Let $Z$ be a reduced complex space and let    $\mathbb{E} : = (U, U', U'', B, B'', j)$  be an adjusted $n$-scale on $Z$. For any real  $\tau > 0$  small enough we shall denote by  $\mathbb{E}^{\tau}$  the adjusted  $n$-scale on $Z$ defined as   $\mathbb{E}^{\tau} : = (U^{\tau}, {U'}^{\tau}, U'', B, B'', j )$. We shall call   $\mathbb{E}^{\tau}$ the $\tau$-shrinkage of  $\E$.
    \end{defn}

     Recall that for a polydisc $P$ of radius $R$, $P^{\tau}$ is the polydisc with same center and radius $R - \tau$. The definitions of  $\mathcal{M}(\alpha)^{\tau}$, $M(\alpha)^{\tau}$ are given in the section \hyperref[2.2]{2.B}.

\medskip

\parag{Remarks}
\begin{enumerate}
\item By definition,  the shrinkage of $\E$ does not change the embedding $j$ and  the polydiscs $U'', B, B''$.
\item As $j$ is a closed embedding of an open set in $Z$ in an open neighbourhood of the compact set  $\bar{U}\times \bar{B}$, it is clear that for any given adjusted  $n$-scale  $\E$  on $Z$, there exists a real  $\varepsilon > 0 $ (depending on  $\E$) such that for any  $\tau \in ]0, \varepsilon[$, $\E^{\tau}$ is again an adjusted $n$-scale on $Z$.
\item If $\E$ is an adjusted scale adapted to the $n$-cycle  $X_0$  in  $Z$, for $\tau$ small enough (depending on  $\E$  and   $X_0$), the adjusted $n$-scale  $\E^{\tau}$  remains adapted to    $X_0$  and we shall have also   $\deg_{E^{\tau}}(X_0) = \deg_E(X_0)$.
\item If $\E$ is an adjusted scale adapted to the $n$-cycle   $X_0$ in $Z$, there exists an open neighbourhood  $\mathcal{U}$  of  $X_0$   in  $\mathcal{C}^{loc}_n(Z) $  and a real $\varepsilon > 0$ such that for any  $X \in \mathcal{U}$  and any  $\tau \in ]0,  \varepsilon[$, the adjusted scale   $\E^{\tau}$  remains adapted to $X$ with again   $\deg_{E^{\tau}}(X) = \deg_E(X_0)$.
\item In the situation of the previous proposition \ref{Pte q-univ. class.}, we may, for   $\tau > 0$  small enough, keep the same  patching data  $\mathcal{R}$   on the  finite family  $(\E^{\tau}_i)_{i \in I}$ of adjusted $n$-scales;  if it was complete,  it remains complete and if it was  convenient\footnote{See the notations following Definition \ref{Don. de Rec.}.} for  the $n$-cycle $X_0$, it remains convenient for  the $n$-cycle  $X_0$. Then we have a holomorphic  restriction map
 $$ S(\mathcal{R}) \to S^{\tau}(\mathcal{R})$$
 where $S^{\tau}(\mathcal{R})$ is the classifying space associated to the family $(\mathbb{E}^{\tau})_{i \in I}$, and this map is induced by a {\bf finite product of linear (continuous) compact maps}. This last point is crucial for the finiteness results.
  \end{enumerate}

\subsection{Excellent family.}
In order to avoid that our notations become  too heavy we shall introduce the following conventions when $\mathcal{H}$ is a $n$-Hartogs figure in $\C^{n+p}$ ; we  shall put, using the notations introduced above
 $$U := M(\alpha)\times V, \quad U' :=  M(\alpha)^{\varepsilon'}\times V^{\varepsilon'}, \quad U'' := M(\alpha)^{\varepsilon''}\times V^{\varepsilon''}, \quad B'' := B^{\varepsilon'}$$
 where $ \varepsilon' > 0$ is small enough, and where $0 < \varepsilon'' < \varepsilon'$. The choices of $\varepsilon'$ and  $\varepsilon''$ will be precised when they are useful. We shall associate to  $\mathcal{H}$ the adjusted  $n$-scale on  $\Delta$ given by:
   $$\mathbb{E}_{\mathcal{H}} := (U, U', U'', B, B'', j).$$
We shall put also
 $$ \tilde{U} := \mathcal{M}(\alpha)\times V, \quad  \tilde{U}' := \mathcal{M}(\alpha)^{\varepsilon'}\times V^{\varepsilon'}, \quad \tilde{U}'' := \mathcal{M}(\alpha)^{\varepsilon''}\times V^{\varepsilon''}$$
 the holomorphy envelopes respectively of $U$, $U'$ and  $U''$. Then we shall have a family of adjusted $n$-scales on  $\Delta'$, written down:
  $$ \mathbb{E}^{\eta}_{\tilde{\mathcal{H}}}:= (\tilde{U}^{\eta}, \tilde{U'}^{\eta}, \tilde{U''}, B, B'', j) \quad {\rm with} \quad \tilde{U}^{\eta} := \mathcal{M}(\alpha)^{\eta}\times V^{\eta}, \ \tilde{U'}^{\eta} := \mathcal{M}(\alpha)^{\varepsilon' +\eta}\times V^{\varepsilon' + \eta},$$
  where $\eta$ is a non negative real number, small enough (for $\eta = 0$ we shall simply write $\mathbb{E}_{\tilde{\mathcal{H}}}$).

  Then we shall have the isotropic classifying spaces
  $$\mathcal{G}_{k}(\mathcal{H}) := \Sigma_{U,U'}(k) \quad { \rm and~also} \quad  \mathcal{G}_{k}^{\eta}(\tilde{\mathcal{H}}) := \Sigma_{\tilde{U}^{\eta}, \tilde{U'}^{\eta}}(k).$$
   Proposition \ref{prlgt isotrope}  gives a holomorphic analytic extension map $prlgt^{\eta}$  such that the following diagram commutes:
   $$\xymatrix{ \mathcal{G}_{k}(\tilde{\mathcal{H}}) \ar[d]_{res} \ar[r]^{res^{\eta}} & \mathcal{G}_{k}^{\eta}(\tilde{\mathcal{H}}) \\ \mathcal{G}_{k}(\mathcal{H}) \ar[ru]_{prlgt^{\eta}}& \quad}$$

   \begin{defn}\label{Excellent.} \rm
   Let $Z$ be a reduced complex space of pure dimension $n+p$, let  $\Delta \subset \subset Z$  be a strongly $(n-2)$-concave open set in $Z$ and   $\tilde{X}_0$    a $n$-cycle in an open neighbourhood    $\Delta'$  of $\bar \Delta$ in  $ Z$. We shall say that a finite family $(\mathcal{H}_a)_{a \in A}$   of $n$-Hartogs figures relative to the boundary of  $\Delta$  is {\bf excellent} for the cycle  $\tilde{X}_{0}$   when the following conditions hold, where we write  $\tilde{\E}_{a}$ and  $\E_{a}$ the adjusted scales respectively on $\Delta'$ and  $\Delta$ associated to the $n$-Hartogs figure $\mathcal{H}_{a}$ :
   \begin{enumerate}
 \item The adjusted scales  $\tilde{\E}_{a}$ and  $\E_{a}$ are adapted to the cycle $\tilde{X}_{0}$.
\item We may choose the patching domains of the adjusted scales   $(\tilde{\E}_{a})_{a \in A}$ in order that the union
 $$\tilde{D}''(A) := \bigcup_{a \in A}   j_{a}^{-1}(\tilde{U''}_{a}\times B''_{a})$$
contains the compact  set  $\partial \Delta$.
 \item There exists a finite family of adjusted $n$-scales   $(\E_{b})_{b \in B}$ on $\Delta$, adapted to  $\tilde{X}_0$, such that the finite families  $(\E_{c})_{c \in A \cup B}$   and   $(\tilde{\E}_{c})_{c \in A \cup B}$   are convenient  for   $\tilde{X}_0$, where we put   $\tilde{\E}_b = \E_b$   for   $b \in B$.

 Moreover we ask that the union    $D''(B)$   of the patching domains of the   $(\E_b)_{b \in B}$   covers the compact set   $\Delta \setminus \tilde{D}''(A) $   of   $\Delta$; so   $\tilde{D}''(A) \cup D''(B)$   in an open set containing   $\bar{\Delta}$.
    \end{enumerate}
  \end{defn}

As a consequence, the union $\tilde{D}'(A)^{\eta}\cup D'(B)$ of the isotropy domains will cover $\bar \Delta$ for $\eta > 0$ small enough.

\begin{prop}[Existence of  excellent families]\label{Exist.Excel.}
Let $Z$ be a reduced complex space of pure dimension $n + p$,   $\Delta \subset \subset Z$   be a strongly $(n-2)$-concave open set with smooth boundary and   $\tilde{X}_0$   a   $n$-cycle in an open neighbourhood   $\Delta' \subset Z$  of  $\bar \Delta$   such that any irreducible component of   $\tilde{X}_0$   meets   $\Delta$.

Then there exists a finite family   $(\mathcal{H}_a)_{a \in A}$   of $n$-Hartogs figures relative to the boundary of   $\Delta$   which is excellent for the cycle   $\tilde{X}_0$.
\end{prop}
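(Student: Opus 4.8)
The plan is to build the excellent family in two stages, first producing the boundary scales $(\mathcal{H}_a)_{a\in A}$ and then filling in the interior with ordinary adjusted scales $(\E_b)_{b\in B}$. For the boundary part I would invoke Proposition \ref{Rec.M.H.} directly: since $\Delta$ is strongly $(n-2)$-concave with smooth boundary and every irreducible component of $\tilde{X}_0$ meets $\Delta$, that proposition yields a finite family of boxed $n$-Hartogs figures $(\mathcal{H}'_a,\mathcal{H}_a)_{a\in A}$ relative to $\partial\Delta$, adapted to $\tilde{X}_0$, whose open sets $\mathcal{M}'_a$ cover $\partial\Delta$. Using the dictionary of Section 3.4 (``Excellent family''), each $\mathcal{H}_a$ gives the adjusted scales $\E_a$ on $\Delta$ and $\tilde{\E}_a=\mathbb{E}_{\tilde{\mathcal{H}}_a}$ on $\Delta'$, with prescribed $\varepsilon',\varepsilon''$. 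Adaptedness of the Hartogs figure (Definition \ref{adapte}) together with Remark 3 following Definition \ref{$n-$Marmite} gives that both $\tilde{\E}_a$ and $\E_a$ are adapted to $\tilde{X}_0$, which is Condition 1 of Definition \ref{Excellent.}.

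For Condition 2, the patching domains $\tilde{U''}_a=\mathcal{M}(\alpha)^{\varepsilon''}\times V^{\varepsilon''}$ are slight shrinkings of the $\tilde{U}_a=\mathcal{M}(\alpha)\times V$; since the $\mathcal{M}'_a$ (hence a fortiori the $\mathcal{M}_a$) already cover the compact set $\partial\Delta$, I would choose $\varepsilon''>0$ small enough, uniformly over the finite index set $A$, so that the slightly shrunken patching domains $\tilde{D}''(A)=\bigcup_a j_a^{-1}(\tilde{U''}_a\times B''_a)$ still cover $\partial\Delta$. This is a routine compactness argument: a finite cover of a compact set by open sets remains a cover after sufficiently small uniform shrinking of each member.

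The interior part, Condition 3, is where the real work lies. The set $\Delta\setminus\tilde{D}''(A)$ is a compact subset of the open set $\Delta$ (it is closed in $\bar\Delta$ and disjoint from the neighbourhood $\tilde{D}''(A)$ of $\partial\Delta$). Around each point of this compact set I would choose an ordinary $n$-scale on $\Delta$ adapted to $\tilde{X}_0$ — this is possible at every point of a complex space by the standard local structure of adapted scales — and then enlarge it to an adjusted $n$-scale $\E_b$ with that point in its patching domain $D''(\E_b)$, using Remark 2 following Definition \ref{Ecailles doubles}. Extracting a finite subcover indexed by $B$, I obtain $\bigcup_{b\in B}D''(\E_b)=D''(B)\supset\Delta\setminus\tilde{D}''(A)$, so that $\tilde{D}''(A)\cup D''(B)$ is an open neighbourhood of $\bar\Delta$. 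I would finally verify that the combined families $(\E_c)_{c\in A\cup B}$ and $(\tilde{\E}_c)_{c\in A\cup B}$ are convenient for $\tilde{X}_0$ in the sense following Definition \ref{Don. de Rec.}: every irreducible component of $\tilde{X}_0$ meets $W''=\bigcup_c D''(\E_c)$. For non-compact components this follows because they meet $\partial\Delta$ (Remark following Proposition \ref{Rec.M.H.}), which is covered by the patching domains of the $\E_a$; for compact components contained in $\Delta$, these meet the interior compact set and hence a patching domain of some $\E_b$.

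The main obstacle I anticipate is the bookkeeping of the three nested shrinkage parameters ($\varepsilon',\varepsilon'',\eta$) so that all the covering statements hold \emph{simultaneously} and uniformly over the finite families: one must ensure that shrinking the patching domains from $\tilde{U}_a$ to $\tilde{U''}_a$ (needed to have genuine adjusted scales) does not destroy the covering of $\partial\Delta$, while keeping every scale adapted to $\tilde{X}_0$ and of locally constant degree. Since all index sets are finite and each adaptedness/covering condition is open (by the remarks on adapted scales and by Lemma \ref{Ouverture}), a single sufficiently small choice of parameters works, but organizing this cleanly is the delicate point rather than any single deep argument.
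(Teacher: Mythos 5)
Your proposal takes the same route as the paper's own proof, which is only two sentences long: invoke Proposition \ref{Rec.M.H.} to produce the boundary Hartogs figures (giving Conditions 1 and 2 of Definition \ref{Excellent.}), then cover the compact set $\Delta\setminus\tilde{D}''(A)$ by adjusted $n$-scales on $\Delta$ adapted to $\tilde{X}_0$ (giving Condition 3). Your elaborations --- the uniform shrinkage argument for Condition 2, and the interior cover via Remark 2 following Definition \ref{Ecailles doubles} plus compactness of $\Delta\setminus\tilde{D}''(A)$ --- are correct and supply details the paper leaves implicit.

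However, one step of your verification of convenience is wrong as stated. Condition 3 requires that \emph{both} families $(\tilde{\E}_c)_{c\in A\cup B}$ and $(\E_c)_{c\in A\cup B}$ be convenient for $\tilde{X}_0$. For the tilde family your argument works, since $\tilde{D}''(A)\cup D''(B)$ contains $\bar{\Delta}$ and every component meets $\Delta$. But the patching domains $D''(\E_a)$ of the scales $\E_a$ on $\Delta$ are contained in $M_a\subset\subset\Delta$, so they \emph{cannot} cover $\partial\Delta$: your sentence ``non-compact components meet $\partial\Delta$, which is covered by the patching domains of the $\E_a$'' is false. The correct argument, for a component $\Gamma$ whose trace on $\Delta$ stays inside the collar $\tilde{D}''(A)$, is a chain: $\Gamma$ meets some $\mathcal{M}_a$, hence meets $M'_a$ by Condition 3 of Proposition \ref{Rec.M.H.}; and since $\E_a$ is adapted to $\tilde{X}_0$, so that $\vert\tilde{X}_0\vert$ avoids $j_a^{-1}\big(\bar{U}_a\times(\bar{B}_a\setminus B''_a)\big)$, every point of $\Gamma\cap M'_a$ actually lies in $j_a^{-1}(U''_a\times B''_a)=D''(\E_a)$, provided you also arrange $\varepsilon''\le\varepsilon$, where $\mathcal{H}'_a=\mathcal{H}_a^{\varepsilon}$. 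Similarly, your claim that compact components ``meet the interior compact set'' is not automatic --- a priori such a component could hide inside the collar --- it is exactly Condition 4 of Proposition \ref{Rec.M.H.}: a compact component of $\tilde{X}_0\cap\Delta$ avoids $\bigcup_a\bar{\mathcal{M}}_a$, hence lies in $\Delta\setminus\tilde{D}''(A)\subset D''(B)$. Both repairs use only ingredients already provided by Proposition \ref{Rec.M.H.}, so the gap is local and fixable, but it is precisely the content hidden in the paper's phrase ``in order that Condition 3 holds''.
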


\begin{proof} First we use Proposition \ref{Rec.M.H.} to cover  $\partial \Delta$  by a finite family of $n$-Hartogs figures relative to the boundary of   $\Delta$   adapted to the cycle   $\tilde{X}_0$  such that Conditions 1 and 2 hold. Then we build a finite family of adjusted $n$-scales   $(\E_b)_{b \in B}$  on   $\Delta$, adapted to  $\tilde{X}_0$   in order that Condition 3 holds.
\hfill $\Box$
\end{proof}

  \subsection{The extension and finiteness theorem.}

  The next theorem will be crucial in the proof  of Theorem \ref{germe}.

  \begin{thm}\label{Th. de finitude}
Let $Z$ be a reduced complex space of pure dimension $n+p$, where   $n \geq 2$, $p \geq 1$. Assume that there exists a $\mathscr{C}^{2}$ exhaustion $ \varphi : Z \to ]0, 2]$ which is strongly $(n-2)$-convex on the open set $\varphi^{-1}(]0, 1[)$ and let $\Delta := \{ x \in Z \ / \  \varphi(x) > \alpha\}$ for some $\alpha \in ]0, 1[$ which is not a critical value for $\varphi$.  Let   $\tilde{X}_0$    a closed $n$-cycle of an open neighbourhood    $\Delta'$    of   $\bar{\Delta}$    in $Z$ such that any irreducible component of   $\tilde{X}_0$   meets $\Delta$. Then there exists an open neighbourhood $\Delta''$ of $\bar \Delta$  in $\Delta'$  and a $\mathsf{f}$-analytic family  $(\tilde{X}_{\xi})_{\xi \in \Xi}$  of $n$-cycles in  $\Delta''$  parametrized by a reduced complex space  $\Xi $  (of finite dimension)  such that $X_{\xi_{0}} = \tilde{X}_0 \cap \Delta''$ and such that $\Xi$ is isomorphic to an open neighbourhood of  $\tilde{X}_{0} \cap \Delta''$  in $\mathcal{C}_{n}^{\mathsf{f}}(\Delta'')$. It satisfies the following universal property:
\begin{itemize}
\item[] For any  $\mathsf{f}$-analytic  family   $(X_s)_{s \in S}$   of $n$-cycles in   $\Delta$   parametrized by a Banach analytic set $S$ and such that its value at some  $s_{0} \in S$ is equal to   $\tilde{X}_0\cap \Delta $, there exists an open neighbourhood $S'$ of $s_{0}$ in $S$ and an unique holomorphic map
$$ h : S' \to \Xi $$
satisfying  the equality   $X_s  = \tilde{X}_{h(s)}\cap \Delta$    for each   $s \in S'$.
\end{itemize}
\end{thm}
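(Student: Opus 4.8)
The plan is to deduce everything from the representability and shrinkage machinery assembled above, the strong $(n-2)$-concavity entering only through the isotropic Hartogs extension $prlgt$. First I would apply Proposition \ref{Exist.Excel.} to fix an excellent family $(\mathcal{H}_a)_{a\in A}$ of $n$-Hartogs figures relative to $\partial\Delta$ for $\tilde{X}_0$, together with the interior adjusted scales $(\mathbb{E}_b)_{b\in B}$ on $\Delta$, so that both $(\mathbb{E}_c)_{c\in A\cup B}$ on $\Delta$ and $(\tilde{\mathbb{E}}_c)_{c\in A\cup B}$ on $\Delta'$ are convenient for $\tilde{X}_0$. Choosing complete patching data $\mathcal{R}$, Definition \ref{Class. fam. conv.} produces a Banach analytic classifying space $S(\mathcal{R})$ for cycles near $\bar\Delta$ and, applied to the tilde scales $\tilde{\mathbb{E}}^{\eta}$, an extended classifying space $\tilde{S}$. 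Proposition \ref{Pte q-univ. class.} equips each with a tautological $\mathsf{f}$-analytic family and the almost-universal property; the extended family is defined on a neighbourhood $\Delta''$ of $\bar\Delta$, because the sets $\mathcal{M}(\alpha)^{\eta}_a$ reach past $\partial\Delta$ while the interior scales together with $\Delta$ cover the remainder. I write $\xi_0$ for the point of $\tilde{S}$ representing $\tilde{X}_0$.

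Next I would build the two transition maps relating $S(\mathcal{R})$ and $\tilde{S}$ and show they are mutually inverse near $\xi_0$. Restriction of cycles from $\Delta''$ to $\Delta$ gives a holomorphic map $\rho:\tilde{S}\to S(\mathcal{R})$. In the reverse direction, Proposition \ref{prlgt isotrope} furnishes the isotropic extension $prlgt$, which by Corollary \ref{Rec.prolgt.} continues each branched covering across the concave boundary; the pieces glue by the variant patching Lemma \ref{Var.Recol.}, Condition 3 of an excellent family ensuring that every irreducible component meeting the outer region already meets the patching region. This yields a holomorphic extension map $e$. Uniqueness of analytic continuation gives that extending a $\Delta$-cycle and then restricting recovers it, while the extended cycle on $\Delta''$ is the unique one restricting to a given one on $\Delta$; hence $e$ and $\rho$ are mutually inverse near $\xi_0$ and $\rho$ is a local biholomorphism (this is already the germ of Theorem \ref{global}).

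The finiteness is then extracted from the shrinkage mechanism. By Remark 5 following Definition \ref{Homoth.}, the shrinkage restriction $res^{\eta}:\tilde{S}\to\tilde{S}^{\,\eta}$ is induced by a finite product of continuous compact linear maps, and the identity $res^{\eta}=prlgt^{\eta}\circ res$ recorded at the end of the previous subsection exhibits it as restriction to $\Delta$ followed by the Hartogs re-extension. Since restriction and extension of cycles are mutually inverse near $\xi_0$, the concavity identification $\tilde{S}^{\,\eta}\cong\tilde{S}$ turns $res^{\eta}$ into a compact endomorphism that coincides with the identity in a neighbourhood of $\xi_0$. Consequently the identity of the Zariski tangent space $T_{\xi_0}\tilde{S}$ factors through a compact operator, so $T_{\xi_0}\tilde{S}$ is finite-dimensional; a Banach analytic set with finite-dimensional tangent space at a point is, near that point, locally embeddable in a finite-dimensional space, hence a complex space locally of finite dimension. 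Taking the reduction produces the reduced finite-dimensional complex space $\Xi$, and $\rho$ makes $S(\mathcal{R})$ finite-dimensional as well.

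Finally I would identify $\Xi$ with an open neighbourhood of $\tilde{X}_0\cap\Delta''$ in $\mathcal{C}_n^{\mathsf{f}}(\Delta'')$ and verify the universal property. The tautological family defines a classifying map $\Xi\to\mathcal{C}_n^{\mathsf{f}}(\Delta'')$ which is a homeomorphism onto a neighbourhood, and the $\mathsf{f}$-analytic structure matches by Proposition \ref{Pte q-univ. class.}. Given an $\mathsf{f}$-analytic family $(X_s)_{s\in S}$ on $\Delta$ with $X_{s_0}=\tilde{X}_0\cap\Delta$, the concavity extension applied in families (Proposition \ref{prlgt isotrope}, cf. Remark 5 after Proposition \ref{Hart.}) prolongs it to an $\mathsf{f}$-analytic family on $\Delta''$ near $s_0$, whereupon the almost-universal property of $\Xi\cong\tilde{S}$ yields the unique holomorphic $h:S'\to\Xi$ with $X_s=\tilde{X}_{h(s)}\cap\Delta$. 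I expect the main obstacle to lie in the second paragraph: showing that the fibrewise Hartogs extension is holomorphic, compatible with the isotropy condition, and with the gluing, uniformly over a full neighbourhood of $\xi_0$, so that $e$ is a genuine holomorphic inverse of $\rho$. Once this bridge between concavity and the compact shrinkage map is secured, the compact-endomorphism-equal-to-the-identity argument delivers finiteness cleanly, and reconciling the several shrinkage parameters $\eta,\tau$ is routine bookkeeping.
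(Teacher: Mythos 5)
Your overall architecture matches the paper's: an excellent family of Hartogs figures covering $\partial\Delta$, classifying spaces for the interior and extended families of adjusted scales with patching data, an extension map (via Proposition \ref{prlgt isotrope}) and a restriction map that are mutually inverse near the point representing $\tilde{X}_0$, and the $\tau$-shrinkage to make the identity near that point factor through a map induced by a compact linear operator. (The paper organizes the patching data slightly more carefully, via three spaces $S_{-}$, $S_{0}$, $S_{+}$ with data $\mathcal{R}$ versus $\mathcal{R}\cup\mathcal{R}''$, and defines $\Xi:=\alpha^{-1}(S_{+})$ rather than asserting directly that every extension satisfies the complete patching conditions on $\Delta''$; but this is bookkeeping you could repair.)

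The genuine gap is in your finiteness step. You pass from the compact factorization of the identity to the statement that the Zariski tangent space $T_{\xi_0}$ is finite-dimensional, and then invoke the principle: ``a Banach analytic set with finite-dimensional tangent space at a point is, near that point, locally embeddable in a finite-dimensional space, hence a complex space locally of finite dimension.'' That principle is false. Consider the Banach analytic set
$$ X \;=\; \bigl\{ x \in \ell^2 \ /\ x_n^2 - x_n/n = 0 \quad \forall n \bigr\} \;=\; f^{-1}(0), \qquad f(x)_n := x_n^2 - x_n/n . $$
Here $Df_0(h)_n = -h_n/n$ is injective, so the Zariski tangent space of $X$ at $0$ is $\{0\}$; yet the points $\tfrac{1}{n}e_n$ lie in $X$ and accumulate at $0$, so $0$ is not isolated in $X$. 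Hence $X$ is not a $0$-dimensional complex space near $0$, and it cannot be embedded near $0$ as an analytic subset of any $\C^{N}$ (an analytic germ with zero tangent space has the point isolated). This is exactly the kind of pathology of infinite-dimensional Banach analytic sets that the paper is guarding against (cf.\ its warning after Theorem \ref{ncycles}). The correct tool, and the one the paper uses, is the finiteness lemma of \cite[p.\,8]{B75} (Douady's criterion): if the \emph{identity map of the Banach analytic set itself} (not merely of its tangent space) factors holomorphically through a holomorphic map induced by a compact linear map, then the set is a finite-dimensional complex space near the point in question. You in fact have already established the hypothesis of that lemma --- your compact endomorphism coinciding with the identity near $\xi_0$ is precisely the required factorization $\beta = \tilde{\beta}^{\tau}\circ r^{\tau}$ with $r^{\tau}$ compact-induced --- so the repair is to apply this lemma directly instead of linearizing; but as written, your deduction of finite-dimensionality does not follow.
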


\begin{proof} Note that $\Delta$ is a relatively compact open set in $Z$ with a $\mathscr{C}^{2}$ boundary which is strongly $(n-2)$-concave.

Begin by covering the compact set   $\partial \Delta$   by an excellent finite family $(\mathcal{H}_a)_{a \in A}$   of $n$-Hartogs figures for $\Delta$ adapted to the cycle  $\tilde{X}_0$. Choose then open sets   $\Delta_{1}\subset\subset \Delta \subset\subset \Delta'' \subset\subset \Delta'$, such that the following properties hold, where we use the notations introduced above for the finite family of the adjusted scales  $(\tilde{\E}_a)_{a \in A}$   associated to $(\mathcal{H}_a)_{a \in A}$:
\begin{enumerate}
\item[i)] $\bar{\Delta}'' \setminus \Delta_{1} \subset  \bigcup_{a \in A} W''(\tilde{\E}_a) $.
\item[ii)] $ \bigcup_{a \in A} W(\tilde{\E}_a) \subset\subset \Delta' $.
\item[iii)] $ \bigcup_{a \in A} \bar{W}(\E_a) \subset \Delta $.
\item[iv)] $K : = \bigcup_{a \in A} j_a^{-1}(\bar{U}_a\times (\bar B_a\setminus B''_{a})) \subset \Delta_{1}$
\end{enumerate}
It is easy to fulfill  these conditions for  $\Delta_{1}$   and   $\Delta''$   near enough to   $\Delta$, as, by assumption, the  subsets   $\bar{W}(\E_a)$   and    $j_a^{-1}(\bar{U}_a\times \bar B_a\setminus B''_{a})$   are compact in   $\Delta$, and as the union of the open sets   $W''(\tilde{\E}_a)$  contains   $\partial \Delta$.

Note that Condition iv) allows to choose the compact neighbourhood    $\mathcal{K}$  of   $K$ inside   $\Delta_{1}$.

Choose now a convenient finite family  $(\E_b)_{b\in B}$    of adjusted $n$-scales on $\Delta$, adapted to   $\tilde{X}_0$   in order that the open set  $\bigcup_{b \in B}  W''(\E_b)$  contains  $\bar{\Delta}_{1}$.

Put   $\tilde{\E}_b = \E_b$  for   $b \in B$, and define   $C : = A \cup B$. The family  of adjusted scales    $(\E_c)_{c \in C}$   in  $\Delta$   is then convenient for  $\tilde{X}_0 \cap \Delta_{1}$, up to choosing the patching domains big enough. Fix some complete patching data  $\mathcal{R}$   associated to the family  $(\E_c)_{c \in C}$.

The family  $(\tilde{\E}_c)_{c \in C}$  of adjusted scales in $\Delta'$ is convenient for   $\tilde{X}_0 \cap \Delta'' $ if we choose the patching domains big enough. Consider now some complete patching data for this family of the form  $\mathcal{R} \cup \mathcal{R}''$, that is to say containing the patching scales already in   $\mathcal{R}$. Define the following Banach analytic sets:
\begin{enumerate}
\item    $ S_0 $    is the classifying space of the family   $(\tilde{\E}_c)_{c \in C}$, the degrees being these of  $\tilde{X}_{0}$ in the various scales adapted  to the cycle $\tilde{X}_0$, with the patching conditions defined by   $\mathcal{R}$. Note that  $\mathcal{R}$ is not complete in general.
\item  $S_+$   is the classifying space of the family  $(\tilde{\E}_c)_{c \in C}$  with the patching conditions defined by   $\mathcal{R} \cup \mathcal{R}''$.
\item  $S_{-}$   is the classifying space of the family  $(\E_c)_{c \in C}$, with the (complete)  patching conditions defined by   $\mathcal{R}$.
\end{enumerate}
Then we get a holomorphic extension map
$$ \alpha : S_{-} \to S_0 $$
deduced from the extension maps in the $n$-Hartogs figures $(\mathcal{H}_{a})_{a \in A}$.

By definition   $S_+$   is a closed Banach analytic subset of    $S_0$ as it is defined in $S_0$ by the patching conditions given by   $\mathcal{R}''$. Then put   $\Xi : = \alpha^{-1}(S_+)$. So we have a holomorphic map  $\alpha : \Xi \to S_+$. We want to show the following claim:
\parag{Claim}
There exists a holomorphic map  $\beta : S_+ \to \Xi $   satisfying the  two properties:
\begin{enumerate}
\item  We have   $\alpha \circ \beta = Id$   and   $\beta\circ \alpha = Id$  in a neighbourhood of the points defined by   $\tilde{X}_0$   respectively in  $S_+$   and   $\Xi$.
\item The holomorphic map  $\beta$   is the composition of a holomorphic map induced by a linear (continuous) compact map and a holomorphic map.
\end{enumerate}

\smallskip

As the open set   $ \bigcup_{c \in C} W''(\tilde{\E}_c) $  contains   $\Delta''$, there is, on  $S_+$, a tautological family of $n$-cycles which is \textsf{f}-analytic on  $\Delta''$. Then the ``almost universal'' property of   $S_{-}$  gives a holomorphic map  $\tilde{\beta} : S_+ \to S_{-} $   which factorizes via the closed Banach analytic subset  $\Xi \subset S_{-}$. Let us show that the holomorphic map   $\beta : S_+ \to \Xi $   deduced from this factorization satisfies the two properties of the claim.

First we have    $\alpha\circ\beta = Id$   and   $\beta\circ\alpha = Id$  respectively in  $S_+$   and  $\Xi$, by definition of    $\Xi$ and $\beta$ (see Proposition \ref{prlgt isotrope}).

To see the second property, consider $\tau > 0$ small enough and let us show that the holomorphic map induced by the  linear compact (by Vitali's theorem) restriction   $r^{\tau} : S_+ \to S^{\tau}_+ $   factorizes   $\beta$, where $S_+^{\tau}$   is  the classifying space corresponding to the $\tau$-shrinkage  $(\tilde{\E}^{\tau}_c)_{c \in C}$   of the family   $(\tilde{\E}_c)_{c \in C}$   with the patching data   $\mathcal{R} \cup \mathcal{R}''$. Indeed, for $\tau$ small enough, the tautological family of    $S^{\tau}_+ $    is still \textsf{f}-analytic on   $\Delta$    and the ``almost universal'' property of $S_{-}$   gives again a holomorphic map   $\tilde{\beta}^{\tau} : S^{\tau}_+  \to S_{-}$. And we have    $\beta = \tilde{\beta}^{\tau}\circ r^{\tau}$   proving our assertion. In fact, the map   $\tilde{\beta}^{\tau}$   takes its values in   $\Xi$   because the $\tau$-shrinkage does not change  the patching data deduced from  $\mathcal{R} \cup \mathcal{R}''$ for $\tau$ small enough.

We conclude that the Banach analytic set  $\Xi$  has finite dimension thanks to the finiteness lemma of  \cite[p.\,8]{B75} (see also \cite[Chapter V]{BM2}). Moreover, it is isomorphic to  $S_+$  which is also of finite dimension.

The holomorphic isomorphism  $\beta : S_{+} \to \Xi$ factorizes via an open neighbourhood of $X_{0}\cap \Delta''$ in $ \mathcal{C}_{n}^{\mathsf{f}}(\Delta'')$ because $S_{+} $ parametrizes a \textsf{f}-analytic family of cycles in $\Delta''$, and because a cycle $X$ near enough to $X_{0}\cap \Delta''$ defines an element in $\Xi \subset S_{-}$ as it satisfies the patching conditions $\mathcal{R} \cup \mathcal{R}''$. This shows that we can identify the Banach analytic set $\Xi$ (which is a reduced finite dimensional complex space)  with an open neighbourhood of $X_{0}\cap \Delta''$ in $\mathcal{C}_{n}^{\mathsf{f}}(\Delta'')$.

The universal property is  obvious.
\hfill $\Box$
\end{proof}

Note that it is not restrictive to choose $\Delta'' := \{\varphi > \alpha_{1}\}$ for some $\alpha_{1}\in ]0, \alpha[$, near enough to $\alpha$.

\medskip

\parag{Remarks}\begin{enumerate}
\item  The reduced complex space (of finite dimension)  $\Xi$  built in the previous theorem parametrizes a \textsf{f}-analytic family of $n$-cycles in the open set  $\Delta''$  which is an open neighbourhood of $\bar{\Delta}$. So, when we have a  \textsf{f}-analytic  family $(X_s)_{s \in S}$  of $n$-cycles in $\Delta$  parametrized by a Banach analytic set $S$ and such that its value at some  $s_{0} \in S$ is equal to  $\tilde{X}_0\cap \Delta $ we can extend each cycle $X_s$, $s \in S'$, to a $n$-cycle $\tilde{X}_s$ in $ \Delta''$ in order that the family $(\tilde{X}_s)_{s \in S'}$ is \textsf{f}-analytic in $\Delta''$, with the condition that each irreducible component of $\tilde{X}_s$ meets $\Delta$ and with  the equality $\tilde{X}_s \cap \Delta = X_{s}$ for each $s \in S'$.
\item We shall see later on  that  $\Xi$ is also (isomorphic to) an open neighbourhood of the point  $\tilde{X}_{0}\cap \Delta$  in   $\mathcal{C}_{n}^{\mathsf{f}}(\Delta)$.
\end{enumerate}

  \section{Finiteness of the space of $n$-cycles of a reduced strongly $(n-2)$-concave space ($n \geq 2$).}

  \subsection{The global extension theorem.}

  First we have to complete our terminology.

  \begin{defn}\label{n-concave} \rm
  We shall say that a reduced complex space $Z$ is {\bf strongly $q$-concave}, where $q \geq 0$ is a natural integer, if there exists a real valued $\mathscr{C}^{2}$ exhaustion function on $Z$,
  $\varphi : Z \to  ]0,2]$,  which is strongly $q$-convex outside the compact set $K := \varphi^{-1}([1, 2])$.
   \end{defn}

   In the sequel, when we consider a reduced complex space $Z$ which is assumed to be $q$-concave, we shall always assume implicitly that we have chosen  such an exhaustion $\varphi$. For instance, any reduced compact complex space is strongly $q$-concave for any $q \geq 0$.

   When $Z$ is strongly $q$-concave irreducible and non compact of dimension at least $q +1$, the function $\varphi$ achieves its maximum at a point in $K$. So we shall have    $\varphi(Z) = ]0, u]$ with $u \geq 1$.

   \begin{thm}\label{prlgt global}
Let $n \geq 2$ be an integer. Let $Z$ be a reduced complex space which is strongly $(n-2)$-concave. Let  $\alpha \in ]0, 1[$ and let   $X$  be a finite type  $n$-cycle in the open set  $Z_{\alpha} := \{z \in Z \ / \ \varphi(z) > \alpha \}$. Then there exists a unique $n$-cycle $\tilde{X}$ in  $\mathcal{C}_{n}^{\mathsf{f}}(Z)$ such that  $\tilde{X} \cap Z_{\alpha} = X$.
\end{thm}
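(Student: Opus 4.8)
The plan is to extend $X$ across ever lower superlevel sets and glue, letting the finiteness fall out at the end. Two facts are used throughout: since $\alpha < 1$ we have $K := \varphi^{-1}([1,2]) \subseteq Z_\alpha$, and (as in the introductory remark) every irreducible $n$-dimensional analytic subset of $Z$ meets $K$, by the maximum principle for the $(n-2)$-convex $\varphi$. Uniqueness is then immediate: if $\tilde X,\tilde X' \in \mathcal{C}_n^{\mathsf{f}}(Z)$ both restrict to $X$ on $Z_\alpha$, any irreducible component $\Gamma$ of $\tilde X$ meets $K \subseteq Z_\alpha$, so $\Gamma \cap Z_\alpha$ is a nonempty open piece of a component of $X = \tilde X' \cap Z_\alpha$; by the identity principle $\Gamma$ is a component of $\tilde X'$ as well, with the multiplicity read off on $Z_\alpha$. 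Hence $\tilde X = \tilde X'$.

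For existence I would set $A := \{\, c \in \,]0,\alpha] : X \text{ extends to a closed } n\text{-cycle } X_c \text{ on } Z_c \text{ with } X_c \cap Z_\alpha = X \,\}$. Then $\alpha \in A$, and $A$ is stable downwards (restrict). By uniqueness the family $(X_c)_{c\in A}$ is compatible on each nested overlap $Z_{c'} \subseteq Z_c$, so Lemma~\ref{Recol.} glues it; in particular, if $]c,\alpha]\subseteq A$ then gluing over $\bigcup_{c'>c} Z_{c'} = Z_c$ gives $c \in A$. Thus either $A = \,]0,\alpha]$, or $A = [\beta_*,\alpha]$ with $\beta_* := \inf A > 0$. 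In the first case, gluing over $\bigcup_{c>0} Z_c = Z$ produces a closed $n$-cycle $\tilde X$ on $Z$ with $\tilde X \cap Z_\alpha = X$; this $\tilde X$ is automatically of finite type, since each of its components meets the compact set $K$ and the components of a closed analytic set are locally finite. So everything reduces to proving $\beta_* = 0$.

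Suppose $\beta_* > 0$; then $\beta_* \in A$, and I would try to extend $X_{\beta_*}$ strictly below $\beta_*$. If $\beta_*$ is not a critical value of $\varphi$, then $\partial Z_{\beta_*}\subseteq\{\varphi=\beta_*\}$ is a smooth, strongly $(n-2)$-concave boundary (using $\beta_*<1$ and Definition~\ref{q-concave} together with the convexity--concavity discussion). Covering this compact boundary by finitely many $n$-Hartogs figures adapted to $X_{\beta_*}$ via Corollary~\ref{Hart.cor.}, extending each across the boundary by Proposition~\ref{prlgt isotrope}, and regluing with the Variant Lemma~\ref{Var.Recol.}, I obtain an extension to $Z_{\beta_*-\varepsilon}$ for some $\varepsilon>0$; by uniqueness it agrees with $X_{\beta_*}$, so $\beta_*-\varepsilon\in A$, contradicting minimality.

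The one remaining case, which I expect to be the \textbf{main obstacle}, is when $\beta_*$ is a critical value of $\varphi$: then $\{\varphi=\beta_*\}$ fails to be a manifold along the critical locus $\Sigma := \mathrm{Crit}(\varphi)\cap\{\varphi=\beta_*\}$, and the local Hartogs extension of Corollary~\ref{Hart.cor.}, which requires $d\varphi\neq 0$, is not available there. My plan is first to cross every noncritical point of the level set as above, so that the only unfilled part of $\{\varphi\le\beta_*\}$ near the level is a neighbourhood of $\Sigma$, and then to fill the (pure $n$-dimensional) support of the cycle across $\Sigma$ by a removable-singularity theorem of Remmert--Stein/Shiffman type, valid for a closed exceptional set of vanishing $(2n-1)$-dimensional Hausdorff measure, with the multiplicities matching by continuity across the open overlap. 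To ensure $\Sigma$ is thin enough I would first replace $\varphi$ by a $\mathscr{C}^2$-close strongly $(n-2)$-convex exhaustion that is Morse on $\varphi^{-1}(]0,1[)$ — legitimate because strong $(n-2)$-convexity is a $\mathscr{C}^2$-open condition (cf. Definition~\ref{q-concave}) and Morse functions are dense, while critical values remain negligible by Lemma~\ref{Sard} — so that $\Sigma$ becomes discrete; the uniqueness proved above guarantees the extension is independent of this auxiliary choice. This critical-level crossing is exactly where the strong $(n-2)$-concavity is used in full, and I expect the removable-singularity step (together with the reduction to a nondegenerate exhaustion) to be the delicate point, the gluing, the finiteness, and the noncritical crossings being routine given the machinery already in place.
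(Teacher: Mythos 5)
Your uniqueness argument and your downward-infimum/gluing skeleton (sets $A$, gluing over nested $Z_{c'}$ via Lemma \ref{Recol.}, reduction to $\inf A = 0$) match the paper's proof. The fatal problem is the critical-level crossing, exactly the step you flagged. After you cross every noncritical point of $\{\varphi = \beta_*\}$, the cycle is defined on an open set containing $\bar{Z}_{\beta_*}\setminus \Sigma$, i.e.\ on $Z_{\beta_*}$ together with a neighbourhood of the noncritical part of the level set. Near a critical point $z_c \in \Sigma$ the complement of this set is \emph{not} thin: in general (e.g.\ whenever $z_c$ is not a local minimum of $\varphi$) every punctured ball around $z_c$ contains a full-dimensional open region where $\varphi < \beta_*$ and where no extension has yet been constructed, and this region is attached to $z_c$. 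A Remmert--Stein/Shiffman removability theorem requires the set to be analytic on $V\setminus E$ with $V$ a \emph{full} neighbourhood of $z_c$ and $E$ closed of vanishing $\mathcal{H}^{2n-1}$-measure; here analyticity is only known on a region that pinches at $z_c$, so the theorem does not apply. Making $\varphi$ Morse shrinks $\Sigma$ to a discrete set but does not change this: the unfilled set is an open region abutting $z_c$, not the point $\{z_c\}$ itself. (The Morse perturbation on a singular $Z$ is itself delicate, but that is secondary.)

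Your noncritical crossing also has a gap. Corollary \ref{Hart.cor.} (via Proposition \ref{Hart.}) assumes the cycle $X_0$ is given in an open neighbourhood of the boundary point $z$ --- i.e.\ it is \emph{already extended} across the boundary --- and that $\varphi_{\vert X_0}$ is not critical at $z$ (condition $(*)$). For your $X_{\beta_*}$, defined only inside $Z_{\beta_*}$, these hypotheses are unavailable, indeed meaningless at $z$; constructing a Hartogs figure adapted to an un-extended cycle would require control of the cluster set of $\vert X_{\beta_*}\vert$ on $\partial Z_{\beta_*}$, which is a priori unknown --- it is precisely what an extension theorem must provide. This is why the paper's own proof does not use its Hartogs machinery here at all (that machinery serves to extend analytic \emph{families} of cycles near a cycle already known to extend, as in Theorem \ref{Th. de finitude}). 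Instead it invokes the Siu--Trautmann theorem \cite[Theorem 8.3]{ST}, which extends the pure $n$-dimensional analytic set $X$ across \emph{every} boundary point of $Z_\alpha$, critical or not, with no transversality hypothesis, and uniquely. With that single input the paper glues finitely many local extensions to reach some $Z_\beta$ with $\beta < \alpha$, and then runs the same infimum argument you propose down to $0$; both of your problematic steps disappear. To repair your proof you would have to replace both crossing steps by this (or an equivalent) citation, at which point it becomes the paper's proof.
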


The proof will use the following remark.

\parag{Remark} Consider a closed irreducible analytic subset $\Gamma$ of dimension $n$  in $Z$. As the restriction  $\varphi_{\vert \Gamma}$ of the exhaustion $\varphi$ to $\Gamma$ must reach its maximum (as $\varphi$ is continuous and proper), this maximum cannot be obtained at a point in which  $\varphi$ is strongly $(n-2)$-convex. So we have $\Gamma \cap \varphi^{-1}[1,2]) \not= \emptyset$.

\begin{proof} The uniqueness of $\tilde{X}$ is a consequence of the previous remark:
if $\tilde{X}_{i}$, $i = 1, 2$, are in $\mathcal{C}_{n}^{\mathsf{f}}(Z)$ and satisfy $\tilde{X}_{i} \cap Z_{\alpha} = X$, then any irreducible component $\Gamma_{1}$ of $\tilde{X}_{1}$ has to meet $Z_{\alpha}$ and so has to contain an open set in $\tilde{X}_{2}$. Then it has to be an irreducible component of $\tilde{X}_{2}$ and its multiplicity in $\tilde{X}_{1}$ and $\tilde{X}_{2}$ must coincide. So $\tilde{X}_{1} = \tilde{X}_{2}$.

To show that this cycle exists, consider first the case where $X$ is compact in  $Z_{\alpha}$. Then  $\tilde{X} := X$ is a solution. So it enough to consider the case where $X$ is irreducible and non compact. Thanks to \cite[Theorem 8.3]{ST}, for each $z \in \partial Z_{\alpha}$ there exists an open set  $U_{z}$  and an unique closed  analytic set  $X_{z}$ in $Z_{\alpha} \cup U_{z}$ of pure dimension $n$ such that  $X_{z} \cap Z_{\alpha}= X$. Choose a finite set of points  $z_{1}, \dots, z_{N}$ and open sets  $U'_{i}\subset\subset U_{i}:= U_{z_{i}}$ such that the union of the $U'_{i}$ covers the compact set $\partial Z_{\alpha}$. Let  $\Omega := Z_{\alpha}\cup \big(\bigcup_{i=1}^{N} U'_{i}\big)$ and put
 $$X_{1} := (X \cup \big( \bigcup_{i=1}^{N} X_{z_{i}}\big))\cap \Omega.$$
 Let us show that  $X_{1}$ is a closed analytic subset in $\Omega$. 
Consider $z \in \Omega$. If $z$ is in $Z_{\alpha}$ we have  $X_{1} = X$  in a neighbourhood of $z$ and the assertion is clear. If not, either $z$  is not in any $\partial U'_{i}$  and $X_{1}$ is the union of the $X_{z_{i}}$ in a neighbourhood of $z$ and the assertion is clear, or $z$ is in $\partial U'_{j_{1}}, \dots, \partial U'_{j_{k}}$ for $j_{1}, \dots, j_{k}$ in $[1,N]$. As the set  $X_{j_{h}}$ is closed and analytic in $Z_{\alpha}\cup U_{j_{h}}$, then $X_{1}$ is again the union of the $X_{z_{i}}$  near $z$ in $\Omega$, and the assertion is proved.

 So in this situation there exists a real  positive  $\beta < \alpha$ such that  $Z_{\beta}\subset \Omega$. Let  $X_{2}$  be the irreducible component of  $X_{1}\cap Z_{\beta}$  which contains $X$; then $X_{2}$  is a closed irreducible analytic subset of $Z_{\beta}$ such that $X_{2} \cap Z_{\alpha} = X$.

 Now let
 $$ \gamma := \inf\{ \beta \leq \alpha \ / \ \exists X_{\beta} \ \mbox{irreducible  $n$-cycle  of $Z_{\beta}$ such that $X_{\beta}\cap Z_{\alpha} = X$} \}.$$
 Then what we obtained above shows that we have $\gamma < \alpha$, and, applying the same arguments to the cycle $X_{\gamma}$ defined on $Z_{\gamma}$ via the cover of $Z_{\gamma}$  by the $Z_{\beta}, \beta > \gamma$ in which we already built an irreducible  $n$-cycle  $X_{\beta}$ extending $X$, we conclude that  $\gamma = 0$  and that there exists an (unique) irreducible $n$-cycle  $\tilde{X}$  in $Z$ extending $X$.
\hfill $\Box$
\end{proof}

\subsection{Some consequences.}

We shall give first  some easy consequences of the fact that the reduced complex space $Z$ is  strongly $n$-concave.

\begin{prop}\label{type fini}
Let  $n \geq 2$ be an integer and let $Z$ be a reduced complex space which is  strongly $(n-2)$-concave. Then the natural map $ j : \mathcal{C}_{n}^{\mathsf{f}}(Z) \to \mathcal{C}_{n}^{loc}(Z)$ is a homeomorphism. Moreover, for each   $\alpha \in ]0,1[$  the restriction map
 $$res_{\alpha} : \mathcal{C}_{n}^{\mathsf{f}}(Z) \to \mathcal{C}_{n}^{\mathsf{f}}(Z_{\alpha})$$
 is well defined and is also a homeomorphism.
 \end{prop}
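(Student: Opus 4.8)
The plan is to treat both assertions in the same way: establish first that the maps in question are bijections of underlying sets, then observe that the forward maps are continuous (essentially by definition), and finally concentrate all the work on proving that the two inverses are continuous. The forward continuity is immediate: the inclusion $j$ is continuous because the $\mathsf{f}$-topology refines the $loc$-topology (and is in general not a homeomorphism onto its image, as recalled after Corollary \ref{important}), while $res_{\alpha}$ is continuous since restriction of cycles to an open subset is continuous for the topology of $\mathcal{C}_{n}^{\mathsf{f}}$.

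For bijectivity the decisive input is the strong $(n-2)$-concavity. A strongly $(n-2)$-convex function has no local maximum on an analytic set of dimension $\ge n-1$, so the restriction of the proper exhaustion $\varphi$ to any closed irreducible $n$-dimensional analytic subset $\Gamma$ of $Z$ attains its maximum in $K:=\varphi^{-1}([1,2])$; in particular $\Gamma\cap K\neq\emptyset$, which is exactly the Remark preceding Theorem \ref{prlgt global}. Since $K$ is compact, local finiteness shows that any $n$-cycle has only finitely many components meeting $K$, hence finitely many components at all, so $j$ is a set-theoretic bijection. For $res_{\alpha}$ the hypothesis $\alpha<1$ gives $K\subset Z_{\alpha}$ and makes $\{\varphi\ge\alpha\}$ compact; applying the maximum principle to $\varphi$ on the relatively compact closure of each irreducible component of $X\cap Z_{\alpha}$ shows as above that such a component must already meet $K$. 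Local finiteness along $K$ then forces $X\cap Z_{\alpha}$ to be of finite type, so $res_{\alpha}$ is well defined into $\mathcal{C}_{n}^{\mathsf{f}}(Z_{\alpha})$, and Theorem \ref{prlgt global} provides precisely its inverse, the extension $X\mapsto\tilde{X}$.

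The heart of the matter is the continuity of the two inverses. The only difference between the $loc$- and $\mathsf{f}$-topologies is the uniform control of irreducible components, that is, the quasi-properness of the tautological graph: a $loc$-convergent sequence may lose components towards the ends of $Z$, whereas $\mathsf{f}$-convergence forbids this. Here such a loss cannot occur, because the single fixed compact $K$ meets every irreducible component of every $n$-cycle. I would make this precise through the comparison of compact subsets of \cite{B15}: a subset of $\mathcal{C}_{n}^{\mathsf{f}}(Z)$ is relatively compact if and only if its image in $\mathcal{C}_{n}^{loc}(Z)$ is relatively compact \emph{and} its components are uniformly caught by a fixed compact set, the second condition being automatic with $K$. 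Consequently $j$ and $j^{-1}$ preserve relative compactness, and since the cycle spaces are metrizable it suffices to compare convergent sequences: a continuous bijection inducing a bijection of compact subsets is then a homeomorphism. The same comparison handles $res_{\alpha}$, since restriction keeps the uniform compact $K\subset Z_{\alpha}$ and, by the previous step, the extension creates no component disjoint from $K$ (every component of $\tilde{X}$ meets $K\subset Z_{\alpha}$, hence is already present in $X=\tilde{X}\cap Z_{\alpha}$), so both $res_{\alpha}$ and its inverse preserve relative compactness.

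The main obstacle is exactly this last paragraph: it rests on the precise description of the $\mathsf{f}$-topology and of its compact subsets from \cite{B15}, together with the verification that the uniform component-catching condition—usually the delicate point—is rendered automatically true by the no-local-maximum property of $\varphi$. Once this is granted, the passage from a bijection of compact subsets to a genuine homeomorphism is formal.
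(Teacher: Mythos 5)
Your treatment of everything except the last step is sound and essentially the paper's own: the bijectivity arguments (every irreducible $n$-dimensional subset meets $K=\varphi^{-1}([1,2])$ by the no-local-maximum property, local finiteness gives finite type, Theorem \ref{prlgt global} gives the inverse of $res_{\alpha}$) coincide with the paper's, and your use of the compact-set comparison of \cite{B15} to show that a $loc$-convergent sequence whose members are uniformly caught by $K$ is also $\mathsf{f}$-convergent is equivalent in content to the paper's Lemma \ref{petit lemme}, which is proved there directly by a subsequence argument. So the first assertion (that $j$ is a homeomorphism) is fine.

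The gap is in the continuity of $res_{\alpha}^{-1}$, and it is exactly at the point you declare ``automatic''. The criterion you invoke requires \emph{two} conditions for relative compactness in $\mathcal{C}_{n}^{\mathsf{f}}(Z)$: (a) relative compactness of the image in $\mathcal{C}_{n}^{loc}(Z)$, and (b) uniform catching of the components by a fixed compact set. The concavity of $Z$ gives you (b) for the extended cycles, but you never establish (a): if $(X_{\nu})$ converges in $\mathcal{C}_{n}^{\mathsf{f}}(Z_{\alpha})$, nothing formal guarantees that the extensions $\tilde{X}_{\nu}$ have locally uniformly bounded volumes on compact subsets of $Z$ outside $Z_{\alpha}$, say on $\varphi^{-1}([\beta,2])$ with $\beta<\alpha$ --- and by Bishop's theorem this volume bound is what relative compactness in $\mathcal{C}_{n}^{loc}(Z)$ amounts to. Controlling cycles across the concave boundary is precisely the hard analytic content of the paper, not a formal consequence of the maximum principle; note that even Proposition \ref{compacite} needs an extra hypothesis (a $d$-closed form with compact support) to produce such bounds. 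Worse, in this sequential setting the statement ``$res_{\alpha}^{-1}$ maps compact sets into relatively compact sets'' is essentially equivalent to the continuity of $res_{\alpha}^{-1}$, so your argument assumes what it must prove. The paper fills this gap with Theorem \ref{Th. de finitude}, built on the Hartogs-figure extension maps of Proposition \ref{prlgt isotrope}: these give that $loc$-convergence of $(X_{\nu}\cap Z_{\beta'})$ forces $loc$-convergence of the extensions on some strictly larger open set $Z_{\beta}$, $\beta<\beta'$, and an infimum/connectedness argument on the set of levels where convergence holds then drives the level down to $0$, yielding convergence in $\mathcal{C}_{n}^{loc}(Z)$. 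Some substitute for this quantitative extension step must appear in any correct proof.
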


\begin{proof}
Let us prove first that   any $n$-cycle $X$ in $Z$ has finitely many irreducible components. As this implies the same result for each $Z_{\alpha}$ for $\alpha \in ]0, 1[$, this will imply the fact that the restriction map $res_{\alpha}$ is well defined,  and then bijective as a consequence of Theorem \ref{prlgt global}.

 As the family of  irreducible components of a $n$-cycle is  locally finite, only finitely many irreducible components of $X$  can  meet the compact set $K := \varphi^{-1}([1,2])$. But we have seen in the remark following the previous theorem that {\bf any} irreducible component of $X$ must meet $K$. So $X$ is a finite type cycle.

 To show the continuity of $res_{\alpha}^{-1}$ it is then enough to prove that $j$ is a homeomorphism which is an easy consequence of the lemma below.

\begin{lemma}\label{petit lemme}
Let $Z$ be a reduced complex space and let $(X_{\nu})_{\nu \geq 0}$ be a sequence of $n$-cycles in $Z$ converging in $\mathcal{C}_{n}^{loc}(Z)$ to a cycle $Y$. Assume that there exists a relatively compact open set $\Omega$ in $Z$ such that any irreducible component of each $X_{\nu}$ and of  $Y$ meets  $\Omega$. Then all these cycles are of finite type and the sequence converges to $Y$ in $\mathcal{C}_{n}^{\mathsf{f}}(Z)$.
\end{lemma}

\begin{proof}[Proof of Lemma \ref{petit lemme}] 
First if $Y = \emptyset$, for $\nu \gg 1$ the cycle $X_{\nu}$ will be disjoint from the compact set $K := \bar \Omega$, and this implies that $X_{\nu}$ is the empty cycle. So the conclusion holds in this case. If  $Y$ is not empty, let $U$ be a relatively compact  open set in $Z$ meeting each irreducible component of $Y$. We have to show, by definition of the topology of $\mathcal{C}_{n}^{\mathsf{f}}(Z) $, that for  $\nu \gg 1$ each irreducible component of  $X_{\nu}$ meets $U$. If it is not the case,  passing to a subsequence, we may assume that for each $\nu$ there exists an irreducible component $\Gamma_{\nu}$ of $X_{\nu}$ disjoint from $U$. As, passing again to a subsequence, we may assume that the sequence  $(\Gamma_{\nu})$ converges in $\mathcal{C}_{n}^{loc}(Z)$ to a cycle $\Gamma$, we shall have $\vert \Gamma \vert \subset \vert Y\vert$ and $\vert \Gamma\vert \cap U = \emptyset$. To conclude, it is enough to show that  $\Gamma$ is not the empty cycle, as any irreducible component of $\Gamma$ is also an irreducible component  of $Y$ and then meets $U$ by hypothesis. As each $\Gamma_{\nu}$ is not empty, it has to meet $K = \bar \Omega$. This implies that $\Gamma$ also meets $K$ and so is not empty. This contradicts our assumption.
\hfill $\Box$
\end{proof}

\noindent 
{\it End of the proof of Proposition \ref{type fini}.} We have proved that $j$, and then also each $j_{\alpha} : \mathcal{C}_{n}^{\mathsf{f}}(Z_{\alpha}) \to \mathcal{C}_{n}^{loc}(Z_{\alpha})$ for $\alpha \in ]0,1[$, is a holomorphic homeomorphism. To conlude the proof we have to show the continuity of  $res_{\alpha}^{-1}$, and this reduces to prove that if the sequence $(X_{\nu})$ of $\mathcal{C}_{n}^{loc}(Z)$ is such that the sequence $(X_{\nu}\cap Z_{\alpha})$ converges in $\mathcal{C}_{n}^{loc}(Z_{\alpha})$, then  it converges in $\mathcal{C}_{n}^{loc}(Z)$. Let $Y_{\alpha} \in \mathcal{C}_{n}^{loc}(Z_{\alpha})$ be the limit of this sequence in $\mathcal{C}_{n}^{loc}(Z_{\alpha})$ and let $Y \in \mathcal{C}_{n}^{loc}(Z)$  be the cycle extending it. Let $A$ be the set of $\beta \in ]0, \alpha]$ such that the sequence  $(X_{\nu}\cap Z_{\beta})$  converges in $ \mathcal{C}_{n}^{loc}(Z_{\beta})$ to  $Y\cap Z_{\beta} $. Then $\alpha$ is in $A$ so $A$ is not empty. Put $\gamma := \inf A$. Theorem \ref{Th. de finitude} implies that $\gamma = 0$ and we obtain also the convergence in any $\mathcal{C}_{n}^{loc}(Z_{\beta})$, for any $\beta > 0$; this gives the convergence in $\mathcal{C}_{n}^{loc}(Z)$, as, by definition, a $n$-scale on $Z$ is also a $n$-scale on $Z_{\beta}$ for $\beta > 0$ small enough.
\hfill $\Box$
\end{proof}

\subsection{An analytic extension criterion.}

The aim of this paragraph is to prove the following analytic extension result.

\begin{thm}\label{ncycles}
Let $Z$ be a complex space and $n$ an integer. Consider a $\mathsf{f}$-continuous\footnote{This means that we have a continuous family of finite type $n$-cycles such that its graph is quasi-proper over $S$. This is equivalent to the continuity of the classifying map $ \varphi : S \to \mathcal{C}_n^\mathsf{f}(Z)$ of this family.} family $(X_{s})_{s\in S}$  of $n$-cycles of finite type in $Z$ parametrized by a reduced complex space $S$. Fix a point  $s_{0}$ in $S$ and assume that the open set $Z'$ in $Z$ meets all irreducible components of $X_{s_{0}} $ and such that the family of cycles $(X_{s}\cap Z')_{s \in S}$ is analytic at  $s_{0}$. Then there exists an open neighbourhood  $S_{0}$ of $s_{0}$ in  $S$ such that the family  $(X_{s})_{s\in S_{0}}$ is $\mathsf{f}$-analytic.
\end{thm}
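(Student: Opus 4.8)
The plan is to reduce the statement, scale by scale, to the definition of an analytic family (Definition \ref{famille analytique}). Since the family is assumed $\mathsf{f}$-continuous, its graph is already quasi-proper over $S$, and quasi-properness is by definition a local condition that persists on a neighbourhood of $s_0$; hence it suffices to produce an open neighbourhood $S_0$ of $s_0$ on which $(X_s)_{s\in S_0}$ is \emph{analytic} at $s_0$ on all of $Z$. Fix an $n$-scale $E=(U,B,j)$ adapted to $X_{s_0}$, with $U$ a (connected) polydisc. As the natural map $\mathcal{C}_n^{\mathsf{f}}(Z)\to\mathcal{C}_n^{loc}(Z)$ is continuous, $\mathsf{f}$-continuity gives continuity in $\mathcal{C}_n^{loc}(Z)$, so after shrinking $S$ to some $S_0$ the scale $E$ stays adapted to every $X_s$ with constant degree $k$; this is exactly conditions i) and ii). There remains condition iii), namely the holomorphy on $S_0\times U$ of the classifying map $f:S_0\times U\to\Sym^k(B)$, which, by the isotropy condition (see the discussion following Lemma \ref{banach}), is equivalent to the joint holomorphy of all the trace maps $T^i_m(f)$ on $S_0\times U$.

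The engine of the argument is an elementary continuation principle in the parameter: if $g(s,t)$ is continuous on $S_0\times U$, holomorphic in $t\in U$ for each fixed $s$, and jointly holomorphic on $S_0\times\omega$ for some nonempty open $\omega\subset U$, then $g$ is jointly holomorphic on all of $S_0\times U$ (because $U$ is connected). Indeed, expanding $g(s,\cdot)$ at a point of $\omega$, the Taylor coefficients are holomorphic in $s$ by the Cauchy integral formula, and local uniform Cauchy estimates propagate joint holomorphy from $\omega$ across $U$. Each $T^i_m(f)(s,\cdot)$ is, for fixed $s$, a genuine single-valued holomorphic map on all of $U$, so this applies to $g=T^i_m(f)$. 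Thus condition iii) for $E$ is reduced to producing a \emph{single} nonempty open $\omega\subset U$ over which the family is analytic.

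To produce such an $\omega$ I first show that the family is analytic in a neighbourhood of every regular point of $\vert X_{s_0}\vert$. Let $P\subset\vert X_{s_0}\vert$ be the set of points admitting a neighbourhood in $Z$ on which the family is analytic at $s_0$; then $P$ is open and, by hypothesis, contains $Z'\cap\vert X_{s_0}\vert$. At a regular point $x_\infty\in\bar P$ one may choose a small adapted scale centred at $x_\infty$ whose multigraph over $U$ is the single smooth local branch of $\vert X_{s_0}\vert$ through $x_\infty$ (with its multiplicity $d$); since its centre is a neighbourhood of $x_\infty$ it meets $P$, and near such a point of $P$ the trace over that one sheet is jointly holomorphic, hence so is the full trace of this scale over a nonempty $\omega$. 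The continuation principle then yields analyticity in this scale, so $x_\infty\in P$. Therefore $P$ is open and closed along the regular locus; as $Z'$ meets the (connected) smooth locus $\Gamma_{\mathrm{reg}}$ of each irreducible component $\Gamma$ of $X_{s_0}$, I conclude that $P$ contains $\vert X_{s_0}\vert_{\mathrm{reg}}$.

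Returning to an arbitrary scale $E$ adapted to $X_{s_0}$, the finite projection $\pi:\vert X_{s_0}\vert\cap(\bar U\times\bar B)\to U$ sends the singular locus to a proper analytic subset of $U$, so over the dense open set $\omega_{\mathrm{gen}}:=U\setminus\pi(\mathrm{Sing})$ every fibre point is a regular point of $\vert X_{s_0}\vert$, hence lies in $P$. Writing $T^i_m(f)$ over $\omega_{\mathrm{gen}}$ as the sum of the partial traces over the individual sheets, each summand is jointly holomorphic because the family is analytic near the corresponding regular fibre point, the partial trace in $E$ being the restriction of the isotropic classifying datum furnished by $P$ (via the holomorphy of change of scale recalled before Definition \ref{Class. fam. conv.}). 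Hence $T^i_m(f)$ is jointly holomorphic on $S_0\times\omega_{\mathrm{gen}}$, and the continuation principle upgrades this to all of $S_0\times U$, giving iii). As $E$ was arbitrary, the family is analytic at $s_0$, and together with the persisting quasi-properness this is the desired $\mathsf{f}$-analyticity on $S_0$. I expect the delicate point to be this last additivity-and-continuation step: one must control the partial traces uniformly as the sheets of $X_s$ collide or separate over the branch locus when $s$ moves away from $s_0$, and verify that the per-point analyticity furnished by $P$ glues correctly into the full trace in the fixed scale $E$.
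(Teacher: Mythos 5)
Your core mechanism coincides with the paper's. Your ``continuation principle'' is exactly the paper's Proposition \ref{nfonction}, and your propagation of the set $P$ through the regular points of $\vert X_{s_0}\vert$ by means of adapted scales whose multigraph is a single smooth branch is precisely the paper's main step (phrased there as a contradiction at a smooth boundary point of the analyticity set $A$). Where you genuinely diverge is at the end: the paper pushes analyticity across $\mathrm{Sing}\,\vert X_{s_0}\vert$ using the extension criterion of \cite[Chapter IV, Criterion 3.1.7]{BM1} and then invokes locality of analyticity, whereas you work scale by scale, writing each trace $T^{i}_{m}(f)$ over a generic base point as a finite sum of partial traces attached to the (separated) fibre points, each jointly holomorphic by local analyticity at regular points plus holomorphy of changes of scales, and then apply the continuation principle in the parameter once more. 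This variant is legitimate: continuity of the family and of the isotropy data in $(s,t)$ make the partial-trace decomposition work near a single generic point, which is all the continuation principle needs; contrary to your closing worry, no uniform control of colliding sheets over the branch locus is required, and this is not the delicate point of your argument.

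The genuine gaps are elsewhere. First, your opening reduction is not a valid deduction: $\mathsf{f}$-analyticity of $(X_{s})_{s\in S_{0}}$ requires analyticity at \emph{every} point of $S_{0}$, and passing from analyticity at the single point $s_{0}$ (together with $\mathsf{f}$-continuity and quasi-properness) to $\mathsf{f}$-analyticity on a neighbourhood is exactly Proposition \ref{open}, i.e. Proposition 2.2.3 of \cite{B15}; this is a non-trivial result which the paper quotes as the final step of its proof, and which your phrase ``quasi-properness persists'' neither proves nor replaces. Second, your one-sentence proof of the continuation principle is complete only when $S$ is smooth. When $S$ is a singular, possibly non-normal, reduced space, both of your claims --- that the Taylor coefficients are holomorphic in $s$ (they are integrals, hence local uniform limits, of holomorphic functions on $S$) and that the sum of the series is holomorphic --- require that holomorphic functions on a reduced complex space be closed under local uniform convergence (a theorem of Grauert--Remmert), or else the paper's own three-step treatment: Hahn--Banach for smooth $S$, then weakly normal $S$, then the general reduced case via continuity, meromorphy and \cite[Chapter IV, Criterion 3.1.7]{BM1}. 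That this is not a formality is shown by the paper's remark that the statement \emph{fails} for infinite-dimensional singular Banach analytic parameter spaces: finite dimensionality and reducedness of $S$ must enter the proof somewhere, and in your sketch they never visibly do.
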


The hypotheses translated in terms of classifying maps means that we have a continuous map $\varphi : S \to \mathcal{C}_{n}^{\mathsf{f}}(Z) $ such that the composed map $r\circ\varphi$  is holomorphic at $s_{0}$, where $r : \mathcal{C}_{n}^{\mathsf{f}}(Z)  \to \mathcal{C}_{n}^{loc}(Z') $ is the restriction map.

Then the theorem says that there exists an open neighbourhood  $S_{0}$ of $s_{0}$ in  $S$ such that the map  $\varphi$ is holomorphic on $S_{0}$. Note that, as $r$ is holomorphic\footnote{in the sense that for any holomorphic map  $\psi : T \to \mathcal{C}_{n}^{\mathsf{f}}(Z) $ of a reduced complex space $T$ the composed map $r\circ\psi$ is holomorphic.}, the hypothesis  that $\varphi$ is holomorphic at $s_{0}$ is a necessary condition.

\medskip

This result is not true in general if we take for  $S$  a non smooth  Banach analytic set which is not of finite dimension (locally). The reader may find a counter-exemple with an isolated singularity in \cite[Chapter V]{BM2}.

The key point for the proof of the previous theorem is the following analytic extension result.

\begin{prop}\label{nfonction}
Let $S$ a reduced complex space and let  $\emptyset \not= U_{1} \subset U_{2}$  be two polydiscs in  $\C^{n}$. Let  $f : S \times U_{2} \to \C$ a continuous function, holomorphic on  $\{s\}\times U_{2}$  for each   $s \in S$. Assume moreover that the restriction of $f$ to  $S \times U_{1}$ is holomorphic. Then $f$ is holomorphic on   $S \times U_{2}$.
\end{prop}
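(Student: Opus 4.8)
The plan is to localise in the $U_2$-variable and to propagate joint holomorphy outward from $U_1$ by a connectedness argument, the engine being the termwise Taylor expansion in $t$. Write $c$ for the centre and $\rho = (\rho_1,\dots,\rho_n)$ for the polyradius of $U_2$, and introduce
$$ A := \{\, t_0 \in U_2 \ / \ f \ \text{is holomorphic on}\ S\times P\ \text{for some polydisc}\ P\ \text{with}\ t_0 \in P \subset\subset U_2 \,\}. $$
By hypothesis $A \supset U_1 \neq \emptyset$, and $A$ is open by construction. As $U_2$ is connected, it will be enough to prove that $A$ is closed in $U_2$; then $A = U_2$ gives joint holomorphy near every slice $S\times\{t_0\}$, hence on all of $S \times U_2$, which is what we want.

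The heart of the matter is a propagation lemma: if $f$ is holomorphic on $S \times P$ for some polydisc $P$ around a point $a \in U_2$, then $f$ is automatically holomorphic on $S \times Q$, where $Q := \{\, t \ / \ |t_j - a_j| < \rho_j - |a_j - c_j|,\ 1 \le j \le n \,\}$ is the largest polydisc centred at $a$ contained in $U_2$. Indeed, since each slice $f(s,\cdot)$ is holomorphic on all of $U_2 \ni a$, it has a Taylor expansion $f(s,t) = \sum_{\alpha} c_\alpha(s)\,(t-a)^\alpha$ converging on $Q$, with
$$ c_\alpha(s) = \frac{1}{(2i\pi)^n}\int_{|z_j - a_j| = r_j}\frac{f(s,z)}{\prod_{j=1}^n (z_j - a_j)^{\alpha_j + 1}}\, dz_1\cdots dz_n $$
for any small polyradius $r$. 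Choosing $r$ so that the closed torus lies in $P$, where $f$ is jointly holomorphic, each $c_\alpha$ is holomorphic in $s$ (integration of a jointly holomorphic function over a compact torus; equivalently, $c_\alpha$ is the composition of the holomorphic map $S \to \mathcal{B}(P'', \C)$ furnished by Proposition \ref{banach borne} for $P'' \subset\subset P$ with the ``$\alpha$-th Taylor coefficient at $a$'' functional, which is continuous by the Cauchy inequalities).

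The remaining point, which is the main technical step, is that the series converges locally uniformly on $S \times Q$. Fix a compact $L \subset S$ and a polydisc $Q'' \subset\subset Q$, and choose $r$ with $\sup_{t \in Q''}|t_j - a_j| < r_j$ and the closed polydisc of polyradius $r$ around $a$ contained in $U_2$ (possible since $Q'' \subset\subset Q$). Then $M_L := \sup\{\,|f(s,z)| \ / \ s \in L,\ |z_j - a_j| = r_j\,\}$ is finite by continuity of $f$ on a compact set, and the Cauchy inequalities give $|c_\alpha(s)| \le M_L\, r^{-\alpha}$ for all $s \in L$. With $\theta_j := \sup_{t \in Q''}|t_j - a_j|/r_j < 1$, the term $|c_\alpha(s)|\,|(t-a)^\alpha|$ is bounded on $L \times Q''$ by $M_L\,\theta^\alpha$, so the Weierstrass $M$-test gives normal convergence, uniformly on $L \times Q''$. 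The partial sums $\sum_{|\alpha| \le N} c_\alpha(s)(t-a)^\alpha$ are holomorphic on $S \times Q''$, and a locally uniform limit of holomorphic functions on a reduced complex space is holomorphic; hence $f$ is holomorphic on $S \times Q''$, and, letting $Q''$ exhaust $Q$, on $S \times Q$. (Equivalently, the corresponding Banach-valued partial sums converge uniformly on compacts of $S$ in $\mathcal{B}(Q'', \C)$, so their limit $S \to \mathcal{B}(Q'', \C)$ is holomorphic and Proposition \ref{banach borne} applies.)

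With the propagation lemma established, closedness of $A$ is immediate: if $t_0 \in \bar A \cap U_2$, pick $a \in A$ so close to $t_0$ that $|t_{0,j} - a_j| < \rho_j - |a_j - c_j|$ for every $j$ — possible because $\rho_j - |t_{0,j} - c_j| > 0$ and both sides depend continuously on $a$ — so that $t_0 \in Q_a$; since $a \in A$, the lemma gives holomorphy of $f$ on $S \times Q_a$, a neighbourhood of the slice $S \times \{t_0\}$, whence $t_0 \in A$. The genuine obstacle is the uniform-in-$s$ control of the Taylor coefficients over compacts of $S$; once the estimate $|c_\alpha(s)| \le M_L\,r^{-\alpha}$ is in place, the statement reduces to the elementary facts that a normally convergent series of holomorphic functions is holomorphic and that slice-holomorphy on the full polydisc $U_2$ lets one analytically continue the joint holomorphy from $U_1$ all the way across $U_2$.
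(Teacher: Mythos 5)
Your propagation mechanism is essentially sound: the coefficients $c_\alpha$ are holomorphic on $S$ (cleanest via Proposition \ref{banach borne} composed with the continuous linear ``Taylor coefficient at $a$'' functional), the estimate $|c_\alpha(s)|\le M_L r^{-\alpha}$ over a compact $L\subset S$ is correct because each slice is holomorphic on all of $U_{2}$, and the normal convergence on $L\times Q''$ follows, as does the connectedness argument in $U_{2}$. The genuine gap is the very last step, which you list among ``the elementary facts'': that a locally uniform limit of holomorphic functions on a reduced complex space is holomorphic. When $S$ is singular this is not elementary, and it is exactly where the whole difficulty of the proposition is concentrated. On a singular reduced space, ``holomorphic'' means locally the restriction of an ambient holomorphic function in an embedding, and this is strictly stronger than ``continuous and holomorphic on the regular part'': for instance $y/x$ on the cusp $\{y^{2}=x^{3}\}$ is continuous, holomorphic off the origin, but not holomorphic at the origin. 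Consequently the truly elementary Weierstrass convergence theorem only yields holomorphy of $f$ on $S_{\mathrm{reg}}\times U_{2}$; to cross $S_{\mathrm{sing}}$ one must show that the local conditions cutting $\mathcal{O}_{S}$ out of the continuous weakly holomorphic functions survive locally uniform limits, and that is a theorem, not a formality. Your parenthetical Banach-valued variant does not avoid this, since holomorphy of a map $S\to\mathcal{B}(Q'',\C)$ at a singular point of $S$ is again defined by local extension in an embedding.

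The statement you need is in fact true: for a reduced complex space, $\mathcal{O}$ is closed in $\mathscr{C}$ for the compact-open topology. This is classical (Grauert--Remmert, \emph{Theory of Stein Spaces}; the proof goes through the normalization $\pi:\tilde S\to S$, the coherence of $\pi_{*}\mathcal{O}_{\tilde S}\big/\mathcal{O}_{S}$ and Cartan's closedness theorems). So your argument becomes a correct proof--and then a genuinely different one from the paper's--provided you cite that theorem instead of calling it elementary. For comparison, the paper also isolates the singular difficulty and pays for it with a citation, but along another route: for smooth $S$ it regards $f$ as a map $F:U_{2}\to\mathscr{C}^{0}(\bar P,\C)$, proves $F$ holomorphic by the Cauchy formula, and uses Hahn--Banach plus the identity theorem (holomorphy over $U_{1}$ forces $F$ to take values in the closed subspace $H(\bar P,\C)$ over all of $U_{2}$); the weakly normal case follows at once; and for general reduced $S$ it observes that $f$ is continuous and meromorphic and invokes the extension criterion of \cite[Chapter IV, Criterion 3.1.7]{BM1}. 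Either way, some non-elementary input about singular parameter spaces is unavoidable; as written, your proof buries that input in a single clause.
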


\begin{proof}[Proof of Proposition \ref{nfonction}] 
First consider the case $S$ smooth. As the question is local on $S$ it is enough to consider the case where $S$ is an open set in $\C^{m}$. Fix an open relatively compact  polydisc $P$ in $S$. The function $f$ defines a map   $ F : U_{2} \to \mathscr{C}^{0}(\bar P, \C)$, where we write down $ \mathscr{C}^{0}(\bar P, \C)$ the Banach space of continuous functions on  $\bar P$, via the formula  $F(t)[s] = f(s,t)$ for $t \in U_{2}$ and  $s \in \bar P$. First we shall show that the map $F$ is holomorphic.

Let  $U \subset\subset U_{2} $ be a polydisc. For  any fix $s \in S$ we have
$$ \frac{\partial f}{\partial t_{i}}(s, t) = \frac{1}{(2i\pi)^{n}}.\int_{\partial\partial U} f(s,\tau).\frac{d\tau_{1}\wedge \dots \wedge d\tau_{n}}{(\tau_{1} - t_{1})\dots (\tau_{i}-t_{i})^{2}\dots (\tau_{n}-t_{n})} \quad \forall t \in U \quad \forall i \in [1,n].$$
where  $t := (t_{1}, \dots, t_{n})$ are coordinates on $\C^{n}$. This Cauchy formula shows that  $F$ is $\C$-differentiable and its differential at the point $t \in U$ is given by
$$h \mapsto \sum_{i=1}^{n} F_{i}(t).h_{i}, \quad h \in \C^{n},$$
where  $F_{i} $  is associated to the function
$$ (s,t) \mapsto  \frac{\partial f}{\partial t_{i}}(s, t) \quad i \in [1,n]$$
which is holomorphic for each fixed  $s \in S$  thanks to the Cauchy formula above.

Let  $H(\bar P,\C)$ the closed subspace of $\mathscr{C}^{0}(\bar P, \C)$ of functions which are holomorphic on $P$. Our hypothesis implies that the restriction of $F$ to the non empty open set $U_{1}$ takes its values in this subspace. Let us show that this is also true on $U_{2}$. Assume that there exists $t_{0} \in U_{2}$ such that $F(t_{0})$ is not in  $ H(\bar P,\C)$. Thanks to the {\it Hahn-Banach theorem} we can find a continuous linear form  $\lambda$  on $\mathscr{C}^{0}(\bar P, \C)$, vanishing on $H(\bar P,\C)$, and such that  $\lambda(F(t_{0})) \not= 0$. But the function $t \mapsto \lambda(F(t))$ is holomorphic on $U_{2}$ and vanishes on $U_{1}$; this contradicts  $\lambda(F(t_{0})) \not= 0$. So  $F$ takes values in $H(\bar P,\C)$ and $f$ is holomorphic on $S\times U_{2}$ when $S$ is a complex manifold.

The case where $S$ is a weakly normal complex space follows immediately.

When $S$ is a general reduced complex space, the function $f$ is meromorphic and continuous on $S\times U_{2}$ and holomorphic on $S\times U_{1}$. So the closed analytic subset $Y \subset S \times U_{2}$ along which $f$ may not be holomorphic has no interior point in each $\{s\}\times U_{2}$. The analytic extension criterion of \cite[Chapter IV, Criterion 3.1.7]{BM1} allows to conclude.
\hfill $\Box$
\end{proof}

\begin{proof}[Proof of Theorem \ref{ncycles}]
Let $\vert G\vert \subset S \times Z$  be the graph of the $f$-continuous family $(X_{s})_{s\in S}$ and let $A$ be the set of points in
$(\sigma,\zeta) \in \vert G\vert$  admitting an open neighbourhood $S_{\sigma}\times Z_{\zeta}$ in $S \times Z$ such that the family of cycles $(X_{s} \cap Z_{\zeta})_{s \in S_{\sigma}}$ is analytic. Remark that, thanks to our hypothesis, the open set $A$ in $\vert G\vert$   meets every irreducible component of  $\{s_{0}\}\times \vert X_{s_{0}}\vert$.

Assume to begin that there exists a smooth point $z_{0}$ of  $\vert X_{s_{0}}\vert$ in the boundary of $A \cap (\{s_{0}\}\times \vert X_{s_{0}}\vert)$.  Choose a $n$-scale $E := (U,B,j)$   on $Z$ which is adapted to $X_{s_{0}}$ and satisfying :
 \begin{align*}
 & \deg_{E}(\vert X_{s_{0}}\vert) = 1, \quad   j_{*}(X_{s_{0}}) = k.(U \times \{0\})\\
  & z_{0} \in j^{-1}(U\times B),  \quad    j(z_{0}) := (t_{0}, 0).
 \end{align*}
It is clear that such a $n$-scale exists as $z_{0}$ is a smooth point in $\vert X_{s_{0}}\vert$. Let  $S_{1}$ be a sufficiently small open neighbourhood of $s_{0}$ in  $S$ and let   $f : S_{1}\times U \to \Sym^{k}(B)$ be the (continuous) classifying map for the family $(X_{s})_{s\in S_{1}}$ in the scale $E$. As $j^{-1}(U\times \{0\})$ meets $A$, there exists a non empty polydisc  $U_{2} \subset U$ such that   $U_{2}\times \{0\}$ is contained in $A$. Then we may apply Proposition \ref{nfonction} to each scalar  component of $f$ in order to obtain that $f$ is holomorphic on $S_{1}\times U$. Moreover, as the same argument applies to any linear projection of $U\times B$ to $U$ near enough the vertical one; this implies that $f$ is an isotropic map, up to shrinking slightly $U$. This contradicts the fact that the point $(s_{0}, z_{0})$ is in the boundary of the open set  $A \cap (\{s_{0}\}\times \vert X_{s_{0}}\vert)$ of  $ \vert X_{s_{0}}\vert$.

If the boundary of   $A \cap (\{s_{0}\}\times \vert X_{s_{0}}\vert)$ is contained in the singular set of  $\vert X_{s_{0}}\vert$, we may apply the analytic extension criterion of   \cite[Chapter IV, Criterion 3.1.7]{BM1}, and we obtain directly that $A$ contains $\vert X_{s_{0}}\vert$.  So in any case the family of cycles $(X_{s})_{s\in S}$ is analytic at $s_{0}$. As the graph $\vert G\vert$ is, by assumption, quasi-proper on $S$, it is enough to use the next proposition (which is proved in \cite[Proposition 2.2.3]{B15})  to conclude.
\end{proof}

 \begin{prop}\label{open}
 Let $Z $ and $S$ be reduced complex spaces and let  $(X_{s})_{s \in S}$  be a f-continuous family of $n$-cycles in $Z$. Assume that this family is analytic in $s_{0} \in S$. Then there exists an open neighbourhood  $S'$ of $s_{0}$ in $S$  such that the family $(X_{s})_{s \in S'}$ is a  $\mathsf{f}$-analytic family of $n$-cycles in $Z$.
  \end{prop}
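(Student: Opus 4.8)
The plan is to show that, after shrinking $S$ to a suitable open neighbourhood $S'$ of $s_0$, the family is analytic at \emph{every} point of $S'$; since $f$-continuity already provides a quasi-proper graph and quasi-properness is a condition local on the parameter which survives restriction to $S'$, this is enough to conclude that $(X_s)_{s\in S'}$ is $\mathsf{f}$-analytic. I would work with the graph $\vert G\vert := \{(s,z)\in S\times Z \mid z\in\vert X_s\vert\}$, with projection $pr:\vert G\vert\to S$, and with the set $\mathcal{A}\subset\vert G\vert$ of points $(\sigma,\zeta)$ admitting an open neighbourhood $S_\sigma\times Z_\zeta$ on which the restricted family $(X_s\cap Z_\zeta)_{s\in S_\sigma}$ is analytic. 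By construction $\mathcal{A}$ is open in $\vert G\vert$, and the hypothesis that the family is analytic at $s_0$ in the sense of Definition \ref{famille analytique} — holomorphic classifying maps in \emph{all} adapted scales, which is exactly what encodes the isotropy condition near $s_0$ — yields $\{s_0\}\times\vert X_{s_0}\vert\subset\mathcal{A}$.

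First I would use quasi-properness to localize. Choose a neighbourhood $S_1$ of $s_0$ and a compact set $K\subset\vert G\vert$ such that every irreducible component of every fibre $pr^{-1}(s)$, $s\in S_1$, meets $K$. Since $K\cap pr^{-1}(s_0)\subset\{s_0\}\times\vert X_{s_0}\vert\subset\mathcal{A}$ and $\mathcal{A}$ is open, the set $K\setminus\mathcal{A}$ is compact and its image $pr(K\setminus\mathcal{A})$ is a compact subset of $S$ avoiding $s_0$. Putting $S':=S_1\setminus pr(K\setminus\mathcal{A})$ gives an open neighbourhood of $s_0$ with $K\cap pr^{-1}(s)\subset\mathcal{A}$ for every $s\in S'$. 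Consequently, for each $s\in S'$ the fibrewise locus $\mathcal{A}_s:=\{\zeta\mid(s,\zeta)\in\mathcal{A}\}$ is an open subset of $\vert X_s\vert$ which meets every irreducible component of $X_s$.

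The heart of the argument is then to propagate analyticity along each fibre, i.e. to prove that $\mathcal{A}_s$ is also \emph{closed} in $\vert X_s\vert$ for every $s\in S'$; granting this, each irreducible (hence connected) component of $X_s$ meets the non-empty clopen set $\mathcal{A}_s$ and is therefore contained in it, so $\mathcal{A}_s=\vert X_s\vert$, and by the local-to-global character of analyticity (glueing of adapted scales under the isotropy condition) the family is analytic at $s$. To establish closedness I would argue fibre by fibre, as in the proof of Theorem \ref{ncycles}. Suppose $\mathcal{A}_s\neq\vert X_s\vert$ and pick a boundary point $z_0$ of $\mathcal{A}_s$ in $\vert X_s\vert$. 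If $z_0$ is a smooth point of $\vert X_s\vert$, I choose an $n$-scale $E=(U,B,j)$ adapted to $X_s$ centred at $z_0$ with a single geometric sheet, so that the classifying map reduces to a single holomorphic branch $\phi$; over the sub-polydisc corresponding to nearby points of $\mathcal{A}_s$ the map $\phi$ is holomorphic in the parameter, while $\phi(\sigma,\cdot)$ is holomorphic for each fixed $\sigma$, so Proposition \ref{nfonction} forces $\phi$ to be holomorphic on all of $S^*\times U$ for a neighbourhood $S^*$ of $s$. Since a single sheet carries no branching, the isotropy condition is automatic, hence the family is analytic near $(s,z_0)$, contradicting $z_0\notin\mathcal{A}_s$. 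Therefore no boundary point is smooth, the boundary of $\mathcal{A}_s$ lies in the thin singular locus of $\vert X_s\vert$, and so $\mathcal{A}_s$ contains the whole smooth locus; the analytic extension criterion of \cite[Chapter IV, Criterion 3.1.7]{BM1} then extends analyticity across the singular locus, giving $\mathcal{A}_s=\vert X_s\vert$.

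I expect this propagation step (closedness of $\mathcal{A}_s$) to be the main obstacle, for two related reasons: one must make the Hartogs-type extension of Proposition \ref{nfonction} interact correctly with the multiplicity and branching structure of the cycle, which is why I isolate single-sheet scales at smooth boundary points and relegate the singular boundary to the extension criterion; and one must keep track of the fact that ``analytic at a point'' genuinely incorporates the isotropy condition, so that holomorphy of the classifying maps really does upgrade to analyticity of the family. The remaining ingredients — openness of $\mathcal{A}$, extraction of the compact core $K$ from quasi-properness, and the clopen connectedness argument on irreducible components — are routine.
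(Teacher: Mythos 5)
Your overall architecture is sound and is in fact a genuinely different route from the paper's: the paper does not prove Proposition \ref{open} at all (it quotes it from \cite[Proposition 2.2.3]{B15}), and its companion Theorem \ref{ncycles} only establishes analyticity \emph{at} $s_{0}$ before invoking this very proposition. Your idea of using quasi-properness to produce $S'$ so that the seed set $\mathcal{A}_{s}$ meets every irreducible component of $X_{s}$ for \emph{every} $s \in S'$, and then running the fibrewise propagation argument of Theorem \ref{ncycles} at each such $s$, is a legitimate way to make the statement self-contained, and the clopen/connectedness bookkeeping, the extraction of the compact core from quasi-properness, and the reduction of the singular boundary case to \cite[Chapter IV, Criterion 3.1.7]{BM1} are all fine.

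The genuine gap is in the smooth-boundary-point step, namely your claim that ``a single sheet carries no branching, so the isotropy condition is automatic.'' You can indeed choose the scale $E$ so that $\vert X_{s}\vert$ has degree $1$ in $E$, but the cycle $X_{s}$ is then $k$ times that sheet with $k \geq 2$ possible, and for $\sigma$ near $s$ the cycles $X_{\sigma}$ are degree-$k$ multigraphs which in general split into $k$ distinct, branching sheets. Hence the classifying map of the \emph{family} near $(s,z_{0})$ is genuinely $\Sym^{k}(B)$-valued, not a single branch $\phi$, and holomorphy of this map does \emph{not} by itself imply that the family of cycles is analytic: this is exactly the content of the isotropy condition, and the counterexample of \cite[p.\,83]{B75} recalled in the paper (the tautological family of multiform graphs is holomorphic but not analytic) shows the implication fails precisely at multiple cycles. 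So your contradiction at $(s,z_{0})$ is not yet obtained when the component through $z_{0}$ has multiplicity $k \geq 2$. The repair is the one used in the paper's own proof of Theorem \ref{ncycles}: apply Proposition \ref{nfonction} to each scalar component of the $\Sym^{k}(B)$-valued classifying map $f$, and then get isotropy either by repeating this extension argument for the classifying maps associated to linear projections of $U \times B$ onto $U$ close to the vertical one (all still adapted, still single-sheeted for $X_{s}$), or by applying Proposition \ref{nfonction} directly to the isotropy data $T^{i}_{m}(f)$, which are holomorphic in $t$ for fixed $\sigma$, continuous in $\sigma$ (the natural projection $\Sigma_{U,U'}(k) \to H(\bar U, \Sym^{k}(B))$ is a homeomorphism, so $f \mapsto T(f)$ is continuous), and holomorphic on $S^{*}\times U_{2}$. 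Note also that the same mechanism, and not merely Definition \ref{famille analytique}, is what justifies your opening assertion $\{s_{0}\}\times \vert X_{s_{0}}\vert \subset \mathcal{A}$: membership in $\mathcal{A}$ requires the localized family to be analytic at all nearby parameters, not only at $s_{0}$, and this upgrade is exactly the equivalence ``holomorphic classifying map with holomorphic isotropy data $=$ analytic family of cycles'' applied after extracting isotropy near $s_{0}$ from the hypothesis.
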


\subsection{Proof of Theorem \ref{global} and its corollary}

We shall begin by a lemma which will give the case where the $n$-cycle $X_{0}$ is compact.

 \begin{lemma}\label{loc = f}
 Let $Z$ be a strongly $(n-2)$-concave reduced complex space. Then $\mathcal{C}_{n}(Z)$ is open  in $\mathcal{C}_{n}^{\mathsf{f}}(Z)$.
 \end{lemma}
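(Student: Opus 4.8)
The plan is to prove the lemma by producing, for each compact $n$-cycle $X_{0}$, an open neighbourhood in $\mathcal{C}_{n}^{\mathsf{f}}(Z)$ consisting only of compact cycles; the union of these neighbourhoods is then the desired open set $\mathcal{C}_{n}(Z)$. The whole argument rests on the remark preceding Theorem \ref{prlgt global}: every closed irreducible analytic subset of dimension $n$ in $Z$ meets the compact set $K := \varphi^{-1}([1,2])$, because the restriction of $\varphi$ to it attains its maximum and this maximum cannot occur where $\varphi$ is strongly $(n-2)$-convex.

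First I would fix the neighbourhood. Since $|X_{0}|$ is compact and $\varphi > 0$ is continuous, $\varphi$ attains a positive minimum $\epsilon_{0}$ on $|X_{0}|$ (set $\epsilon_{0} := 1$ if $X_{0} = \emptyset$). Choose $\beta \in \,]0, \min(\epsilon_{0},1)[$ and let $C := \varphi^{-1}(\beta)$. This is a compact subset of $Z$, being closed inside the compact set $\varphi^{-1}([\beta, 2])$, and it is disjoint from $|X_{0}|$. Consider the set $\mathcal{U} := \{X \in \mathcal{C}_{n}^{\mathsf{f}}(Z) \ / \ |X| \cap C = \emptyset\}$. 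By Remark 3 following Definition \ref{Ecailles doubles}, the condition of avoiding a fixed compact set is open in $\mathcal{C}_{n}^{loc}(Z)$; since the natural inclusion map $\mathcal{C}_{n}^{\mathsf{f}}(Z) \to \mathcal{C}_{n}^{loc}(Z)$ is continuous and is the identity on underlying cycles, $\mathcal{U}$ is the preimage of $\{Y \in \mathcal{C}_{n}^{loc}(Z) \ / \ |Y| \cap C = \emptyset\}$ and hence is open in $\mathcal{C}_{n}^{\mathsf{f}}(Z)$, and it contains $X_{0}$.

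It then remains to show that every $X \in \mathcal{U}$ is compact, which is the heart of the matter. A finite type cycle has only finitely many irreducible components, so $X$ fails to be compact exactly when one of its components $\Gamma$ is non-compact. As $\varphi$ is a proper exhaustion, $\varphi^{-1}([a,2])$ is compact for every $a > 0$; hence a non-compact closed irreducible $\Gamma$ must satisfy $\inf_{\Gamma} \varphi = 0$, so $\Gamma$ contains points with $\varphi < \beta$. On the other hand the key remark forces $\Gamma \cap K \neq \emptyset$, so $\Gamma$ also contains a point with $\varphi \geq 1 > \beta$. Since $\Gamma$ is irreducible, hence connected, $\varphi(\Gamma)$ is an interval meeting both $]0, \beta[$ and $[1,2]$, so it contains $\beta$; thus $\Gamma$ meets $C$ and $|X| \cap C \neq \emptyset$, contradicting $X \in \mathcal{U}$. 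Therefore every $X \in \mathcal{U}$ is compact.

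The only nontrivial ingredient is the concavity remark ensuring that each component meets $K$: this is what prevents a component from sliding off toward the end $\{\varphi \to 0\}$ while staying below $\beta$. Once that is granted, the rest is an elementary connectedness and intermediate-value argument combined with the openness of the avoid-a-compact condition, and no finiteness result such as Proposition \ref{type fini} is needed.
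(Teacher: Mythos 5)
Your proof is correct and takes essentially the same approach as the paper: fix a level set $\varphi^{-1}(\beta)$ below and disjoint from the compact cycle, use the openness in $\mathcal{C}_{n}^{\mathsf{f}}(Z)$ of the condition of avoiding that compact set, and combine the concavity remark (every irreducible $n$-dimensional subset meets $K$) with connectedness of irreducible components. The only cosmetic difference is that you argue by contradiction (a non-compact component has $\inf_{\Gamma}\varphi = 0$, hence crosses the level set), while the paper concludes directly that every nearby cycle is contained in the relatively compact open set $Z_{\alpha}$ and is therefore compact.
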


\begin{proof} Let $X_{0}$ be a compact cycle in $Z$. There exists $\alpha \in ]0, 1[$ such that $X_{0}$ is contained in $Z_{\alpha} = \{x \in Z \ / \  \varphi(x) > \alpha \}$. So $X_{0}$ does not meet the compact set $\varphi^{-1}(\{\alpha\})$. This is an open condition in $\mathcal{C}_{n}^{\mathsf{f}}(Z)$. And as any irreducible $n$-dimensional analytic subset in $Z$ has to meet $K$, if it does not meet $\varphi^{-1}(\{\alpha\})$ it is contained in $Z_{\alpha}$ (by connectedness). Then any $X \in \mathcal{C}_{n}^{\mathsf{f}}(Z)$ which is near enough $X_{0}$ is contained in $Z_{\alpha}$ so is compact.
\hfill $\Box$
\end{proof}

 Note that under the hypothesis of the previous lemma, $\mathcal{C}_{n}(Z)$ is not closed in $\mathcal{C}_{n}^{\mathsf{f}}(Z)$  in general, as one can see taking $Z := \mathbb{P}_{N}\setminus \{0\}$ and considering the set  of hyperplanes in $\mathbb{P}_{N}$.

\medskip

 As we already know from \cite{B75} that  $\mathcal{C}_{n}(Z)$ is a reduced complex space, Theorem \ref{global} and its corollary are proved near a compact $n$-cycle in $Z$.

\medskip

\begin{proof}[The case  where $X_{0}$ is not compact] 
Of course we are in the case where $Z$ is not compact.  Fix $X_{0}$ a non compact $n$-cycle in $\mathcal{C}_{n}^{\mathsf{f}}(Z)$ and choose an $\alpha \in ]0, 1[$ which is not a critical value of  $\varphi$ and of the restriction of $\varphi$ to  $\vert X_{0}\vert$; this is possible thanks to Sard's Lemma  \ref{Sard} and the fact that $\varphi(\vert X_{0}\vert)$ contains $]0, 1[$.

 Consider now the reduced complex space $\Xi$ constructed in Theorem \ref{Th. de finitude}. It is an open neighbourhood of $X_{0}\cap \Delta''$ in $\mathcal{C}_{n}^{\mathsf{f}}(\Delta'')$ and we may assume that $\Delta'' := Z_{\beta}$ for some $\beta < \alpha$ very near $\alpha$. Due to Proposition \ref{type fini} it is homeomorphic to an open neighbourhood $\mathcal{V}$ of $X_{0}$ in $\mathcal{C}_{n}^{\mathsf{f}}(Z)$. So the restriction map
 $$ res_{0} : \mathcal{V} \to \Xi $$
 is holomorphic, bijective and is a  homeomorphism. Now the continuity of $res_{0}^{-1}$ and the finiteness of $\Xi$ allow to apply Theorem \ref{ncycles} because we already know that the tautological family of cycles parametrized by $\Xi$ is \textsf{f}-analytic on the open set $\Delta''$ which is an open set which meets any irreducible component of each cycle in this family (because $K \subset Z_{\beta}$). Then $res_{0}^{-1}$ is holomorphic and so $res_{0}$ is an isomorphism of Banach analytic sets.
\hfill $\Box$
\end{proof}

  \subsection{A compactness criterion for the connected components of the reduced complex space  $\mathcal{C}_{n}^{\mathsf{f}}(Z)$ when $Z$ is strongly $(n-2)$-concave}

  For a reduced complex space $Z$ which is compact, the compactness of the connected components of $\mathcal{C}_{n}(Z)$ is a consequence of the existence of a  $\mathscr{C}^{1}$ $2n$-differential form on $Z$ which is $d$-closed and such that its $(n, n)$ part is positive definite in the Lelong sense. Indeed, this gives that the volume (computed with this $(n, n)$ part) of the $n$-cycles is constant on connected components. The result follows then from Bishop's theorem \cite{Bis64} (see \cite[Chapter IV]{BM1} for details).

  In the case of a non compact strongly $(n-2)$-concave reduced complex space $Z$ we have the following analogous result :

  \begin{prop}\label{compacite}
  Let $Z$ be a reduced complex space which is strongly $(n-2)$-concave. Assume that there exists on $Z$  a $\mathscr{C}^{1}$ $2n$-differential form  $\omega$  which is $d$-closed with compact support and such that its $(n, n)$ part is positive definite in the Lelong sense in a neighbourhood of $K := \varphi^{-1}([1, 2])$, and everywhere non negative in the Lelong sense. Then the connected components of $\mathcal{C}_{n}^{\mathsf{f}}(Z)$ are compact.
   \end{prop}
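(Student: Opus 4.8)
The plan is to imitate the compact case: manufacture from $\omega$ a ``volume'' that is constant on connected components, apply Bishop's theorem on a relatively compact piece of $Z$ near $K$, and then transport the resulting compactness to all of $Z$ using the concavity machinery already established. First I would check that $v : \mathcal{C}_{n}^{\mathsf{f}}(Z) \to \mathbb{R}$, $v(X) := \int_{X}\omega$, is well defined and \emph{locally constant}. It is well defined since $\omega$ has compact support $\Sigma$, so the integral is taken over the compact set $\vert X\vert \cap \Sigma$. For local constancy I would work with the tautological $\mathsf{f}$-analytic family over a small neighbourhood and integrate along the fibres of the projection $p : \vert G\vert \to S$ of its graph: because $\omega$ is compactly supported, $p$ is proper over $S$ on $S\times\Sigma$, and because $d\omega = 0$ one gets $d\,p_{\ast}(\omega_{\vert G}) = \pm\, p_{\ast}(d\omega_{\vert G}) = 0$, so $p_{\ast}(\omega_{\vert G})$ is a locally constant function, i.e. $v$ is locally constant. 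Hence $v \equiv c$ on each connected component $\mathcal{C}$.

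Next I would use the positivity to bound volumes near $K$. Let $V$ be a neighbourhood of $K$ on which the $(n,n)$-part $\omega_{(n,n)}$ is positive definite in the Lelong sense. Since the compact sets $\varphi^{-1}([\alpha_{0},2])$ decrease to $K = \varphi^{-1}([1,2])$ as $\alpha_{0}\to 1^{-}$, I may fix $\alpha_{0}\in ]0,1[$ with $\overline{Z_{\alpha_{0}}} = \varphi^{-1}([\alpha_{0},2]) \subset V$; note $Z_{\alpha_{0}}$ is relatively compact as $\varphi$ is proper. On $Z_{\alpha_{0}}$ positive definiteness gives a constant $C>0$ with $\mathrm{vol}_{2n}(\vert X\vert \cap Z_{\alpha_{0}}) \leq C\int_{X\cap Z_{\alpha_{0}}}\omega_{(n,n)}$, and since $\omega_{(n,n)} \geq 0$ everywhere and $\int_{X}\omega = \int_{X}\omega_{(n,n)} = c$, the right-hand side is at most $Cc$. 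Thus the volume of every $X\in\mathcal{C}$ inside the relatively compact open set $Z_{\alpha_{0}}$ is uniformly bounded by $Cc$.

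Finally I would conclude by a compactness-and-propagation argument. Given a sequence $(X_{\nu})$ in $\mathcal{C}$, the uniform volume bound together with Bishop's theorem \cite{Bis64} yields a subsequence along which $(X_{\nu}\cap Z_{\alpha_{0}})$ converges in $\mathcal{C}_{n}^{loc}(Z_{\alpha_{0}})$. By the continuity of $res_{\alpha_{0}}^{-1}$ proved inside Proposition \ref{type fini} (this is exactly where strong $(n-2)$-concavity, through Theorem \ref{Th. de finitude}, is used to propagate convergence from $Z_{\alpha_{0}}$ to all of $Z$), the subsequence $(X_{\nu})$ then converges in $\mathcal{C}_{n}^{loc}(Z)$, hence, by Proposition \ref{type fini}, in $\mathcal{C}_{n}^{\mathsf{f}}(Z)$, with some limit $Y$. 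As a connected component is closed, $Y\in\mathcal{C}$. Since the natural topology on these cycle spaces is metrizable, this sequential compactness shows that $\mathcal{C}$ is compact.

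The main obstacle I expect is the first step, namely proving that $v$ is genuinely locally constant rather than merely continuous, since it is local constancy that forces $v$ to be bounded on the a priori non-compact component $\mathcal{C}$; one must be careful that the compact support of $\omega$ both makes the fibre integration proper and kills the boundary terms in Stokes despite the cycles being non-compact. The remaining delicate point is matching Bishop's conclusion to the precise hypotheses of the continuity statement of Proposition \ref{type fini}, which requires that every irreducible component of the cycles involved meet the fixed compact set $K\subset Z_{\alpha_{0}}$; but this is guaranteed by the remark that any non-empty irreducible $n$-dimensional analytic subset of $Z$ must meet $K$.
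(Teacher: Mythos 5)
Your proposal is correct and follows essentially the same route as the paper's proof: local constancy of $X \mapsto \int_{X}\omega$ from $d\omega = 0$ (via the direct image of $\omega$ as a current), the volume bound near $K$ from Lelong positivity plus non-negativity elsewhere, Bishop's theorem applied on $Z_{\alpha_{0}}$, and the homeomorphism of Proposition \ref{type fini} to carry the compactness back to $\mathcal{C}_{n}^{\mathsf{f}}(Z)$. The only cosmetic difference is at the end: you argue by sequential compactness (invoking metrizability), whereas the paper simply observes that the image of the (closed) connected component under the restriction homeomorphism is closed and has compact closure, hence is compact.
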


\begin{proof}
For $\alpha < 1$ near enough to $1$ and  for any continuous hermitian metric $h$ on $Z$ there exists a constant $C$ such that the following inequality holds:
    $$ vol_{h}(X\cap Z_{\alpha}) \leq C.\int_{X} \ \omega \quad \quad {\rm for \ any \ cycle} \ X \in \mathcal{C}_{n}^{\mathsf{f}}(Z).$$
    As the function $X \mapsto \int_{X} \omega$ is locally constant on  $\mathcal{C}_{n}^{\mathsf{f}}(Z)$ because $d\omega = 0$ (the direct image of $\omega$ as a current is $d$-closed, so locally constant at smooth points of $\mathcal{C}^{\mathsf{f}}_{n}(Z)$, and this current is a continuous function on $\mathcal{C}^{\mathsf{f}}_{n}(Z)$ thanks to Proposition IV 2.3.1 of {\it loc. cit.}), we have a uniform bound for the volume of $X \cap Z_{\alpha}$ for $X$  in a given connected component of  $\mathcal{C}_{n}^{\mathsf{f}}(Z)$. This implies that the closure of the image of this connected component in $\mathcal{C}_{n}^{\mathsf{f}}(Z_{\alpha})$ is compact, thanks to Bishop's theorem (see   \cite[Chapter IV, Theorem 2.7.20]{BM1}).
     But the restriction map $\mathcal{C}_{n}^{\mathsf{f}}(Z) \to \mathcal{C}_{n}^{\mathsf{f}}(Z_{\alpha})$ is a homeomorphism by Proposition \ref{type fini}, so the image of a connected component is closed and then compact.
\hfill $\Box$
\end{proof}

\providecommand{\bysame}{\leavevmode\hbox to3em{\hrulefill}\thinspace}
%
%

\bibliographystyle{amsalpha}

\providecommand{\bysame}{\leavevmode\hbox to3em{\hrulefill}\thinspace}
\providecommand{\MR}{\relax\ifhmode\unskip\space\fi MR }
\providecommand{\MRhref}[2]{%
  \href{http://www.ams.org/mathscinet-getitem?mr=#1}{#2}
}
\providecommand{\href}[2]{#2}
\begin{thebibliography}{}

\end{thebibliography}


\begin{thebibliography}{ABC93}

\bibitem[Bar75]{B75}
Daniel Barlet,
\emph{Espace analytique r\'eduit des cycles analytiques complexes compacts d'un espace analytique complexe de dimension finie}. In:
Fonctions de plusieurs variables complexes, {II} ({S}\'em. {F}ran\c{c}ois {N}orguet, 1974--1975), pp. 1--158, Lecture Notes in Math., vol. 482,
Springer, Berlin, 1975.
\MR{0399503}

\bibitem[Bar08]{B08}
\bysame, \emph{Reparam\'etrisation universelle de familles f-analytiques de
              cycles et th\'eor\`eme de f-aplatissement g\'eom\'etrique},
Comment. Math. Helv.,
\textbf{83} (2008), no.~4,
869--888.
\MR{2442966}

\bibitem[Bar13]{B13}
\bysame,
\emph{Quasi-proper meromorphic equivalence relations},
Math. Z.,
\textbf{273} (2013),
no.~1--2,
461--484.
\MR{3010171}

\bibitem[Bar15]{B15}
\bysame,
\emph{Strongly quasi-proper maps and the $f$-flattening theorem}, preprint, 2015.
  \href{https://arxiv.org/abs/1504.01579}{arXiv:1504.01579}

\bibitem[BM14]{BM1}
Daniel Barlet and J\'on Magn\'usson,
\emph{Cycles analytiques complexes. {I}. {T}h\'eor\`emes de pr\'eparation des cycles},
Cours Sp\'ecialis\'es, vol. 22,
Soci\'et\'e Math\'ematique de France, Paris,
2014.
\MR{3307241}

\bibitem[BM]{BM2}
\bysame,
\emph{Cycles analytiques complexes. {II}.},
in preparation.

\bibitem[Bis64]{Bis64}
Errett Bishop,
\emph{Conditions for the analyticity of certain sets},
Michigan Math. J., \textbf{11}
(1964), 289--304.
\MR{0168801}

\bibitem[NS77]{NS}
Fran\c{c}ois Norguet and Yum-Tong Siu,
\emph{Holomorphic convexity of spaces of analytic cycles},
Bull. Soc. Math. France,
\textbf{105} (1977), no. 2,
191--223.
\MR{0590090}

\bibitem[ST71]{ST}
Yum-Tong Siu  and G\"unther Trautmann,
\emph{Gap-sheaves and extension of coherent analytic subsheaves},
Lecture Notes in Math., vol. 172, Springer--Verlag, Berlin--New York,
1971. \MR{0287033}
\end{thebibliography}
\bibliographymark{References}

\end{document}